\documentclass[eqnor,reqno]{amsart}
\usepackage{amssymb,latexsym,amsmath,amsthm,enumerate,mathabx}
\usepackage[mathscr]{eucal}
\usepackage{framed,color,graphicx}
\usepackage{mathrsfs,fontenc}
\usepackage[all]{xy} 
\usepackage{tikz}
\usetikzlibrary{positioning}

\DeclareSymbolFont{bbold}{U}{bbold}{m}{n}
\DeclareSymbolFontAlphabet{\mathbbold}{bbold}

\theoremstyle{plain} 
\newtheorem{thm}{Theorem}[section] 
\newtheorem{lem}[thm]{Lemma}
\newtheorem{WilkiesTheorem}[thm]{Wilkie's HSI Theorem}
\newtheorem{cor}[thm]{Corollary}
\newtheorem{pro}[thm]{Proposition}

\theoremstyle{definition}
\newtheorem{defn}[thm]{Definition}

\newtheorem{remark}[thm]{Remark}
\newtheorem{notation}[thm]{Notation}

\newcommand{\Irr}{\operatorname{Irr}}

\newcommand{\up}[1]{\textup{#1}}

\newcommand{\Th}{\operatorname{Th}}

 \newcommand{\HSI}{\textsf{HSI}}
 \newcommand{\RSI}{\textsf{RSI}}

 \newcommand{\mtab}[4]{%
  \begingroup
  \renewcommand{\arraystretch}{1.05}%
  \begin{array}{@{}c@{\mkern2mu}c@{}}
    {\scriptstyle #1} & {\scriptstyle #2}\\[-0.9ex]
    {\scriptstyle #3} & {\scriptstyle #4}
  \end{array}%
  \endgroup
}

\newcommand{\triple}[4]{%
  \begin{gathered}
    #2\mkern7mu #3\mkern10mu #4\\[-0.15ex]
    \scriptstyle M_{#1}
  \end{gathered}%
}

\newcommand{\Taaaa}{\mtab{a}{a}{a}{a}}
\newcommand{\Taaab}{\mtab{a}{a}{a}{b}}
\newcommand{\Taabb}{\mtab{a}{a}{b}{b}}
\newcommand{\Tbbbb}{\mtab{b}{b}{b}{b}}
\newcommand{\Tabbb}{\mtab{a}{b}{b}{b}}
\newcommand{\Taaba}{\mtab{a}{a}{b}{a}}
\newcommand{\Tabba}{\mtab{a}{b}{b}{a}}

\begin{document}

\title[]{Multiplicatively idempotent HSI algebras satisfy all equations of $\mathbb{N}$}
\author{Tumadhir Alsulami}
\address{Department of Mathematics, Umm Al-Qura University, Makkah, Saudi Arabia} \email{tfsolami@uqu.edu.sa}
\author{Marcel Jackson}
\address{Department of Mathematical and Physical Sciences, La Trobe University, Victoria  3086,
Australia} \email{m.g.Jackson@latrobe.edu.au}
\author{Michael Kinyon}
\address{College of Natural Sciences and Mathematics, University of Denver} \email{michael.kinyon@du.edu}

\subjclass[2020]{Primary: 08B05; Secondary: 03C10, 03C05, 03E10}

\thanks{This work was completed while the first author was a PhD student at La Trobe University, and many of the results in the paper appear in her PhD thesis~\cite{als:phd}}
\keywords{Tarski's High School Identity Problem, exponential laws, equational logic, varieties, idempotent algebras}

\begin{abstract}
An algebra with binary operations $+,\cdot,\uparrow$ and constant $1$ is called an HSI algebra if it satisfies the basic commutative semiring laws for $+,\cdot,1$ on~$\mathbb{N}$ as well as the familiar index laws for exponentiation $\uparrow$.  
These basic axioms, known as the ``High School Identities'' are known to be incomplete, and an algebra satisfying $\HSI$ but failing an equation valid on ${\bf N}:=\langle \mathbb{N};+,\cdot,\uparrow,1\rangle$ is called a \emph{Gurevi\v{c} algebra}.

It is currently unknown if there is an algorithm to recognise finite Gurevi\v{c} algebras, and the best current result is that there exists a 12-element Gurevi\v{c} algebra, and that none of the five 2-element HSI algebras are Gurevi\v{c} algebras.  
We explain how the work of Alex Wilkie can be used to provide an algorithm for deciding validity of the equational laws of $\langle \mathbb{N};+,\cdot,\uparrow,1\rangle$, and use this to show that multiplicatively-idempotent HSI algebras satisfy all valid laws of $\langle \mathbb{N};+,\cdot,\uparrow,1\rangle$.
As consequences of this result, we show that all 44 HSI algebras on 3 elements lie in the variety of $\langle \mathbb{N};+,\cdot,\uparrow,1\rangle$ (that is, are not Gurevi\v{c} algebras), as well as 597 of the 657 models on 4 elements and 11158 of the 13577 models on 5 elements.  
A further consequence is that every Brouwerian lattice lies in the variety of $\langle \mathbb{N};+,\cdot,\uparrow,1\rangle$ (up to a simple term equivalence), showing that the variety of $\langle \mathbb{N};+,\cdot,\uparrow,1\rangle$ has continuum many subvarieties.
The constant-free signature is also explored, and it is shown that in this case all $2$-element models of the constant-free High School Laws satisfy all valid constant-free laws in $\langle \mathbb{N};+,\cdot,\uparrow\rangle$.
\end{abstract}

\maketitle

\section{Introduction}\label{sec:intro}
\maketitle

Let $\uparrow$ denote the binary operation of exponentiation between
numbers and let
${\bf N}$ be the algebra $\langle\mathbb{N};+,\cdot,\uparrow,1\rangle$.  The familiar `high
school identities' of this structure can
be summarised as follows:
\[
{\rm HSI}\left\{\begin{array}{ccc}
&\left.\begin{array}{cc}
x+y \approx y+x,\hfill&
x+(y+z)
\approx (x+y)+z,\hfill\\
x \cdot y \approx y \cdot x, \hfill &x \cdot (y \cdot z) \approx (x \cdot y) 
	\cdot z,\hfill\\
x \cdot (y+z) \approx (x \cdot y)+(x
\cdot z),\hfill& x^{y+z} \approx x^y \cdot x^z,\hfill\\
(x \cdot y)^z \approx {x^z} \cdot {y^z},\hfill&
(x^y)^z \approx x^{y \cdot z},\hfill
\end{array}\right\}\RSI\vspace{.2cm}\\
&\hspace{1.5cm} x \cdot 1
\approx x,\
\
	1^x \approx 1,\  \ x^1 \approx x.\hfill &
\end{array}\right.
\]
For convenience we denote these identities by
${\rm HSI}$ but we will also explore the restricted collection of identities not involving the
constant $1$, which we denote by~$\RSI$ and use ${\bf N}^-$ for $\langle\mathbb{N};+,\cdot,\uparrow\rangle$.
We also let $\overline{\HSI}$ denote the exponentiation-free laws: the first two and half rows, with $x\cdot 1\approx x$; we use $\overline{\bf N}$ for $\langle\mathbb{N};+,\cdot,1\rangle$.

Alfred Tarski asked whether $\HSI$ is sufficient to derive all true laws of positive numbers.
Significant early contributions were made by his PhD student Charles Martin~\cite{mar}, who showed (amongst other interesting results) that the constant-free signature $\{+,\cdot,\uparrow\}$ has no finite equational basis; in particular $\RSI$ is insufficient.  
Tarski's question was resolved by Alex Wilkie~\cite{wil00} in 1980 who provided a negative solution to Tarski's question, by showing that the following law is valid, but not derivable:
\begin{multline*}
((1+x)^x+(1+x+x^2)^x)^y((1+x^3)^y+(1+x^2+x^4)^y)^x \\
\approx ((1+x)^y+(1+x+x^2)^y)^x((1+x^3)^x+(1+x^2+x^4)^x)^y.\tag{$*$}\label{eq:Wilkie}
\end{multline*}
(Wilkie's manuscript was not published until 2000, but had some distribution in manuscript form from 1980 onwards.)
The validity of Wilkie's law~\eqref{eq:Wilkie} is easily seen because both sides reduce to 
\[
(1-x+x^2)^{xy}((1+x)^x+(1+x+x^2)^x)^y((1+x)^y+(1+x+x^2)^y)^x\tag{$*$}\label{eq:WilkieLaw}
\]
once it is observed that $(1+x)(1-x+x^2)\approx 1+x^3$ and $(1+x+x^2)(1-x+x^2)\approx 1+x^2+x^4$ are valid over $\mathbb{R}$.
This is not a true equational deduction, as we allowed subtraction in the term $1-x+x^2$, yet $\mathbb{N}$ is not closed under subtraction (and moving to a setting such as $\mathbb{R}$ or $\mathbb{C}$ results in complications for exponentiation that shift from the simple appeal of Tarski's question: exponentiation is only a partial operation, for example).  
Later, R.~Gurevi\u{c}~\cite{gur90} used generalisations of Wilkie's law to show that no finite set of valid equations was sufficient to prove all valid equations, nor even all valid 1-variable equations.
A rather difficult second contribution of Wilkie's work in~\cite{wil00} is that it is only ever the polynomial factorisation issue that stands between completeness of the \HSI\ laws.
We give a more detailed account of this deep work in Section~\ref{sec:Wilkie}, as the result is an extremely powerful tool in the central proofs of the paper.
 Wilkie's law~\eqref{eq:WilkieLaw} is of course a somewhat arbitrary, though relatively simple, example of a nonderivable law.  
Even keeping to the same simple pattern, if $P,Q,R,S$ are polynomials with $P$, $Q$ irreducible over $\mathbb{Z}[X]$ where $\{PF,QF\}=\{R,S\}$ for some polynomial $F$, then we always have 
\[
(P^x+Q^x)^y(R^y+S^y)^x\approx (P^y+Q^y)^x(R^x+S^x)^y
\]
valid, and it will not be derivable if $R$ and $S$ do not have a common factor.  
A skew variant does not seem to have been noted previously:
 \[
(P^x+R^x)^y(Q^y+S^y)^x\approx (P^y+R^y)^x(Q^x+S^x)^y.
\]
Validity holds because, using the common factor $F$, both sides reduce to $P^{xy}Q^{xy}(1+F^x)^y(1+F^y)^x$.

Wilkie's proof of the nonderivability of his law~\eqref{eq:Wilkie} was partially proof theoretic, but an elegant alternative approach was pioneered by Gurevi\u{c}~\cite{gur85} who showed that any valid non-consequence $s\approx t$ of \HSI\ must have a finite counterexample: a finite model of $\HSI$ that fails $s\approx t$.  
This gives rise to the obvious question as to whether there is a small counterexample to some valid law.
Following our earlier paper~\cite{alsjac2} we say that an algebra is a \emph{Gurevi\u{c} algebra} if it satisfies $\HSI$ but fails some valid law.  (In Burris and Lee~\cite{burlee92} the same name was reserved for counterexamples to Wilkie's law specifically; Gurevi\u{c} himself considered counterexamples to multiple laws, including in his proof of the nonfinite basis property, and the relatively arbitrary nature of Wilkie's original example makes the more general notion we use here seem more sensible.)
The \emph{exponential algebra problem}~\cite{alsjac2} asks of a given finite HSI algebra whether it is a Gurevi\u{c} algebra.  
This problem seems difficult to resolve even for a typical small algebra, and the decidability is currently open.
Burris and Lee~\cite{burlee92} enumerated the 2- and 3-element HSI algebras, noting that there are just three 2-element examples that are not quotients of ${\bf N}$.  
The question as to whether any of these three is a Gurevi\u{c} algebra is already quite nontrivial and was left as an open problem.  This was solved by Asatryan~\cite{asa} and the second author (unpublished, though presented in several talks).
Given the challenges of the 2-element case, the authors initially felt that it would be unrealistic to expect a complete solution for the $3$-element case, where 42 of the 44 HSI algebras on 3 elements are nontrivial (there are two quotients of ${\bf N}$, so these are not Gurevi\u{c} algebras).  
In the present paper however, we present the following general result that covers all but one of the 42 nontrivial cases, and dispatch the 42nd case by ad hoc means.
\begin{thm}\label{thm:main}
$\HSI\cup\{x^2\approx x\}$ is sufficient to derive all true laws of ${\bf N}$.
\end{thm}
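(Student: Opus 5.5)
We reformulate: it suffices to show that every HSI algebra $\mathbf{A}$ satisfying $x^2\approx x$ satisfies every law $s\approx t$ true in ${\bf N}$. The plan rests on Wilkie's analysis, in the form described in Section~\ref{sec:Wilkie}. Modulo $\HSI$, every term can be rewritten as a sum of monomials whose bases are exponential polynomials. A valid law fails to be derivable only because valid polynomial identities in $\mathbb{Z}[X]$ factor in ways unavailable over $\mathbb{N}[X]$. This is exactly the phenomenon with the factor $1-x+x^2$ in~\eqref{eq:Wilkie}.

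\emph{Step 1: reduce to polynomial identities.} I would use Wilkie's theorem to say that $s\approx t$ is valid iff it follows from $\HSI$ together with identities of the form $\prod_i P_i^{u_i}\approx\prod_j Q_j^{v_j}$ (or sums of such). Here the $P_i,Q_j$ are polynomials with positive coefficients (in new variables standing for exponential subterms), and $\prod P_i=\prod Q_j$ holds in $\mathbb{Z}[X]$ after factoring into $\Irr$-components. So it is enough to show that $\mathbf{A}$ satisfies the ``factorisation'' consequences, namely: if polynomials $P,Q\in\mathbb{N}[X]$ have the same irreducible factors over $\mathbb{Z}$, with matching exponent patterns, then the corresponding exponential equalities hold in $\mathbf{A}$.

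\emph{Step 2: exploit idempotence.} In $\mathbf{A}$, $x^2\approx x$ gives $x^n\approx x$ for all $n\ge1$, hence $x\cdot x\approx x$ by $x^{1+1}=x\cdot x$. Thus $\cdot$ is a semilattice operation with identity $1$, and $(x+y)^2=x+y$ expands to $x+y\approx x+y+xy+xy$, with $2x\approx x+x$ playing a role. Every polynomial term evaluated in $\mathbf{A}$ therefore depends only on the \emph{support pattern} of its monomials, with coefficients truncated by the additive behaviour of $\mathbf{A}$. Also $a^{u}$ with $u$ a product behaves via $(a^y)^z=a^{yz}$ and $a^{y+z}=a^ya^z$, so $a^{(\cdot)}$ is a homomorphism from $(A,+)$ into the semilattice $(A,\cdot)$.

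\emph{Step 3: key lemma.} The claim is: if $F,G\in\mathbb{N}[X]$ with $FH=GH'$ arising from the Wilkie normal form, then the idempotent evaluations agree. I would aim to show that in $\mathbf{A}$ the value of a product of polynomials $P_1\cdots P_k$ equals the value of the single polynomial obtained by multiplying out. This holds trivially by distributivity. The real content is that it is determined by data invariant under $\mathbb{Z}$-factorisation, specifically by the Newton polytope/support together with the constant-coefficient information. Newton polytopes are multiplicative: $\mathrm{Newt}(PQ)=\mathrm{Newt}(P)+\mathrm{Newt}(Q)$. So I would try to prove that for $P\in\mathbb{N}[X]$ the value $P(\bar a)$ in $\mathbf{A}$ depends only on the set of vertices of $\mathrm{Newt}(P)$, or on $\mathrm{Newt}(P)\cap\mathbb{N}^n$. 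The tools would be idempotence and the identity $x+y\approx x+y+xy\cdot(\dots)$ derived above, used to fill in interior monomials. Since any $P\in\mathbb{N}[X]$ has support containing its vertices, and $\mathbb{Z}$-factors of a positive polynomial contribute Minkowski summands, both sides of a Wilkie-type identity would have equal invariants.

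The main obstacle is Step 3: filling in interior lattice points purely from $\HSI+x^2\approx x$, and handling coefficients (e.g.\ whether $1+1\approx1$ holds). I would first derive a small list of consequences of $x^2\approx x$ in two variables, such as $x+y\approx x+xy+y$ and $(x+y)^z$ relations, and then split into cases according to whether $1+1=1$ in $\mathbf{A}$.
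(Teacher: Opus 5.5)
\textbf{There is a genuine gap: the key lemma of Step~3 is false.} In a multiplicatively idempotent \HSI\ algebra, the value of a strictly positive polynomial is \emph{not} determined by its Newton polytope, its vertex set, or $\mathrm{Newt}(P)\cap\mathbb{N}^n$, because coefficients matter. Two examples show this.
\begin{enumerate}
\item The modulo~$2$ quotient of ${\bf N}$ satisfies $x^2\approx x$. There $x^2+y^2$ and $x^2+xy+y^2$ have the same Newton polytope but take different values (even and odd) at $x=y=1$. The same algebra refutes the law $x+y\approx x+xy+y$ that you plan to derive; what actually follows from $x^2\approx x$ is $x+y\approx x+2xy+y$.
\item Adding parity information is still not enough. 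Take the quotient of ${\bf N}$ that collapses $\{2,3,\dots\}$ to one element; it is also idempotent, with $x^y=x$. There $x$ and $3x$ have the same Newton polytope and the same reduction modulo~$2$, but differ at $x=1$.
\end{enumerate}
Your invariant does work when $1+1\approx 1$. When $1+1\not\approx 1$ it is simply wrong, not merely hard to prove.

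Step~1 also leaves the real obligation unstated. You never say how positive but not strictly positive irreducible factors, such as $1-x+x^2$, are to be interpreted in $\mathbf{A}$. As written there is nothing to check, since two members of $\mathbb{N}[X]$ with the same $\mathbb{Z}$-factorisation are equal. Wilkie's theorem requires an expansion of $\mathbf{A}$ to a model of $\HSI^*$: values for $t_\rho$ with $\rho\in\Irr\setminus\mathcal{P}^+$ that make every law $t_{\rho_1}\cdots t_{\rho_k}\approx t_\rho$ true.

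\textbf{How the paper closes this.} It sets $t_{p-q}:=p+q$, so it must show $s\approx s'$ in $\mathbf{A}$, where $s=\prod(p_i-q_i)$ is strictly positive and $s'=\prod(p_i+q_i)$. The ingredients you are missing are these.
\begin{enumerate}
\item $s\equiv s'$ modulo~$2$, and $s'$ is coefficientwise at least $s$.
\item A monomial whose support is the union of the supports of at least two monomials of $s$ can be added with coefficient~$2$. This uses $x+y\approx x+2xy+y$, $2\approx 4$ and $x^2\approx x$.
\item For every other monomial of $s'$, either the same trick applies or its coefficient in $s'$ equals its coefficient in $s$ exactly. The paper proves this by setting the variables outside the monomial to $0$ and using positivity of the factors $p_i-q_i$.
\end{enumerate}

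\textbf{Your Newton-polytope picture can be repaired along these lines.} With $t_{p-q}:=p+q$ one has $\mathrm{Newt}(s)=\mathrm{Newt}(s')$. The coefficient of a product at a vertex is the product of the factors' coefficients at the corresponding vertices. So the vertex coefficients of $s$ and $s'$ are $\prod c_i>0$ and $\prod|c_i|$, hence equal. Every non-vertex lattice point of $\mathrm{Newt}(s)$ is a convex combination, with positive weights, of at least two vertices. Its monomial is therefore covered by at least two monomials of $s$, and the even, nonnegative excess of its coefficient in $s'$ can be added two at a time. This argument uses exact coefficients at vertices and parities elsewhere, which is exactly the information your proposed invariant discards.
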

We also consider the constant-free signature explored by Martin.  Here there are 18 distinct 2-element examples, and we are able to piggyback on  Theorem~\ref{thm:main} to show that all of these also lie in the variety of ${\bf N}$, though there is a 6-element counterexample to the simplest of Martin's nonderivable laws.

In terms of an upper bound for the smallest Gurevi\u{c} algebra, after an initial 56-element example was found by Gurevi\u{c}~\cite{gur85}, a number of authors provided gradually smaller examples~\cite{burlee92,gur90,jac96} until a $12$-element example of Burris and Yeats~\cite{buryea} halted progress for over 20 years.  
A computational search by Zhang~\cite{zha} had shown that at least 11-elements is necessary, and more recent efforts by the authors using the counterexample finder Mace4 and utilising approaches explained by Burris and Lee~\cite{burlee92} and other techniques (which were also used by Zhang) seemed to suggest that there was little prospect of further progress (a range of similar laws were explored).
Two recent SAT-solver approaches seem to have reached a new level of capability in such searches.  Independently, Hajdari and Niederhauser~\cite{hajnie} and Subercaseaux and Przybocki~\cite{subprz} have computationally verified that there are no smaller counterexamples to Wilkie's identity than the one found by Burris and Yeats, as well as finding many further 12-element examples. 
Remarkably, the authors of~\cite{subprz} were able to achieve a simple parameterisation of all possible 12-element counterexamples to Wilkie's law, of which there are 8,957,952.  
The authors of~\cite{subprz} also considered some other variants of  Wilkie's law (including some suggested by the authors of the present paper---the skew variants mentioned above for example), but the smallest counterexample found for any other law so far has 13-elements.

\subsection{Structure of paper}
We begin in Section~\ref{sec:Wilkie} by presenting the details for Wilkie's positive contribution to Tarski's $\HSI$ problem: the proof that polynomial factorisation is the only hindrance to completeness of $\HSI$.  
Thus a very careful high school student can in principle deduce all valid exponential laws using only high school methods, given enough time.  
The main purpose is to explain the key required elements for use in subsequent sections, though we take care to demonstrate that the process is effective, and requires nothing beyond factorisation into irreducible polynomials over $\mathbb{Z}$.  
This includes factorisations of numbers, so the process is not polynomial time as far as we can see, even though it is conceptually simple.
We mention that Macintyre~\cite{mac} (after Richardson~\cite{ric}) already showed the decidability of the equational theory of ${\bf N}$ in 1981, with a further algorithm given by Gurevi\u{c}~\cite{gur85}; these algorithms are fundamentally different in character to the one we give here, essentially showing that the number of points of agreement of two exponential terms can be algorithmically bounded.

In Section~\ref{sec:idempotent} we use Wilkie's result to prove Theorem~\ref{thm:main}: idempotent HSI algebras are not Gurevi\u{c} algebras. 
The $3$-element case is then completed: there are no 3-element Gurevi\u{c} algebras.
The use of Wilkie's result in this context was previously invoked by both Asatryan~\cite{asa,asa2} and the second author in the proof of the $2$-element case as well.
We provide a further application of Wilkie's result in Section~\ref{sec:LEA}, in order to create a basic infinite HSI algebra in the variety of ${\bf N}$ that has many useful small quotients.

In Section~\ref{sec:Martin} we consider the constant-free signature, using the results of both Section~\ref{sec:idempotent} and Section~\ref{sec:LEA} to show that all $2$-element models of $\RSI$ are not Gurevi\u{c} algebras (in the constant free sense).  We also provide a $6$-element counterexample, relative to $\RSI$, for several small laws.

\section{Wilkie's Normal Form}\label{sec:Wilkie}
The original (negative) resolution of Tarski's High School Algebra problem is to be found in Alex Wilkie~\cite{wil00}, which in hand-written form was distributed as early as 1980.  
The bulk of that paper is devoted to showing a positive result, which could be summarised as saying that familiar High School style algebra is in fact sufficient to derive all true laws of ${\bf N}$, once enough information on polynomial factorisation is allowed: we need to be able to factorise polynomials in the semiring $\mathbb{N}$.  
(This is somewhat reminiscent of the temporary emergence of imaginary parts in the usual algorithm for solving the cubic and quartic, even in the case when the solutions are ultimately real numbers.)
Of course, $\mathbb{N}$ is not itself closed under subtraction, so care must be taken to formalise the approach.  
Once this is performed however, the argument provides a routine reduction procedure to a kind of normal form: the difficulty in Wilkie's argument is showing that certain exponential expressions are algebraically independent.
We now give an overview of Wilkie's approach, as needed for application in the main result. 

A \emph{polynomial} will be assumed to have coefficients from $\mathbb{Z}$ and indeed can be thought of as elements of the polynomial ring $\mathbb{Z}[y_1,y_2,\dots,x_1,x_2,\dots]$, though sometimes they will be considered as elements of $\mathbb{R}[y_1,y_2,\dots,x_1,x_2,\dots]$ (with coefficients from $\mathbb{Z}$), sometimes as real-valued functions, and sometimes as elements of the polynomial semiring $\mathbb{N}[y_1,y_2,\dots,x_1,x_2,\dots]$, if all of the coefficients happen to be nonnegative.  
A  polynomial $\rho$ is \emph{positive} if $\rho(r_1, r_2, \dots , r_n)\in \mathbb{R}^+$ when $r_1, r_2, \dots , r_n\in  \mathbb{R^+}$ and is \emph{strictly positive} if all of its nonzero coefficients are from $\mathbb{N}$, that is, if it can be considered as a member of $\mathbb{N}[y_1,\dots,x_1,\dots]$.  
The set of positive polynomials in $k$-variables will be denoted by~$\mathcal{P}_k$, though the notation $\mathcal{P}_{m+n}$, where $m+n=k$ is often used to mean that there are $m$ variables of type $y$ and $n$ of type $x$: this is a bookkeeping device used by Wilkie, because later the $x$ variables will be substituted by other terms, whereas the $y$ variables will always remain as individual variables.
The members of~$\mathcal{P}_k$ that are irreducible over $\mathbb{Z}$ and are neither $1$, nor a single variable, will be denoted by $\Irr_k$.  
The two variable kinds are treated differently, but the $y$-variables will eventually be treated the same as for members of~$\Irr_{k}$.  
The exclusion of the single variable case from $\Irr_{k}$ is again because of the role of the two different kinds of variables.
We use $\mathcal{P}_k^+$ and $\Irr_k^+$ to denote  the strictly positive members of $\mathcal{P}_k$ and $\Irr_k$, respectively.  
The members of~$\mathcal{P}_k$ that are monomial with coefficient~$1$ (but not $1$ itself), will be denoted by $\mathcal{M}_k$: in other words the elements of $\mathcal{M}_k$ are simply the $\{\cdot\}$-terms with variables from $y_1,y_2,\dots, y_m,x_1,\dots,x_n$ (with $m+n=k$).
We use the same notations $\mathcal{P},\Irr,\mathcal{M}$ without subscript when the precise number of variables is free.

\begin{notation} The convention of listing $y$-variables before  $x$-variables in tuples follows Wilkie's paper~\cite{wil00}, and this notation has been kept in order to not create confusion when consulting that work.   
We have deviated slightly from Wilkie's notation in the notation $\mathcal{P},\mathcal{P}^+, \Irr,\Irr^+,\mathcal{M}$ to aid readability, as Wilkie moves between the $\mathcal{P}$ and $\Irr$ families by varying the font for $P$ only.
\end{notation}

For each positive polynomial $\rho(y_1,\dots,y_m,x_1,\dots,x_n)$ of~$\mathcal{P}_{m+n}$, introduce a new $(m+n)$-ary operation symbol $t_\rho$, defined on $\mathbb{R}^+$ (or $\mathbb{N}$) by $t_\rho(b_1,\dots,b_m,a_1,\dots,a_n):=\rho(b_1,\dots,b_m,a_1,\dots,a_n)$.  
We let $\mathscr{L}$ denote the usual signature $\{+,\cdot,\uparrow,1\}$ and let the infinite signature $\{+,\cdot,\uparrow,1\}\cup\{t_\rho\mid \rho\in \mathcal{P}\}$ be denoted by $\mathscr{L}^*$.  
Next, let $\HSI^*$ denote the expansion of the equational laws $\HSI$ to include all laws of the form $t_{\rho_1}\dots t_{\rho_n}\approx t_\rho$, where~$\rho$ is strictly positive and decomposes as a product $\rho_1\cdot\dots \cdot \rho_n$ of irreducible positive polynomials, as well as those laws that translate strictly positive polynomial operations to their $\{+,\cdot,1\}$ counterpart: $t_\rho\approx \rho$, whenever $\rho\in\mathcal{P}^+$ (recalling that polynomials in $\mathcal{P}^+$ are also $\{+,\cdot,1\}$ terms). 
We note that Wilkie considers the larger set of equations consisting of all laws
$s\approx t$, where $s$ and $t$ are $\mathscr{L}^*$-terms not involving exponentiation and such that $\mathbb{N}\models s\approx t$.  
Only laws from $\HSI^*$, however, are used in the proof, as can be checked in our exposition below (proof of Lemma~\ref{lem:WilkieNormalForm}).

Despite the flagship negative resolution of Tarski's HSI problem, the following result is arguably the principal result of Wilkie's paper.
\begin{WilkiesTheorem}\label{thm:WilkiesTheorem}
The expanded set of laws $\HSI^*$ is complete for the equational theory of $\langle \mathbb{N},+,\cdot,\uparrow,1\rangle$.
\end{WilkiesTheorem}
The proof of Wilkie's HSI Theorem~\ref{thm:WilkiesTheorem} has two parts: the reduction to a kind of normal form using $\HSI^*$; then the proof that certain terms used in these normal forms are algebraically independent.
The second fact implies that the normal forms are unique, so that $\HSI^*$ proves all valid laws in the signature $\mathscr{L}$.
  
We give details of the first part, verifying that $\HSI^*$ (rather than the larger set used by Wilkie) is sufficient.  The following fact is an easy consequence of Wilkie's HSI Theorem~\ref{thm:WilkiesTheorem}, but otherwise not obviously true.  Here $\uparrow_i$ denotes exponentiation with $0^0:=i$.
\begin{remark}\label{eg:N0}
Each of the algebras $\langle \mathbb{N},+,\cdot,\uparrow,1\rangle$, $\langle \mathbb{N}_0,+,\cdot,\uparrow_0,1\rangle$ and $\langle \mathbb{N}_0,+,\cdot,\uparrow_1,1\rangle$ satisfy the same equations.
\end{remark}
\begin{proof}
As $\langle \mathbb{N},+,\cdot,\uparrow,1\rangle$ is a subalgebra of the other two algebras,  no extra laws are satisfied by the model $\langle \mathbb{N}_0,+,\cdot,\uparrow_0,1\rangle$ and $\langle \mathbb{N},+,\cdot,\uparrow_1,1\rangle$ in comparison to $\langle \mathbb{N},+,\cdot,\uparrow,1\rangle$.  To show that the larger models satisfy each equation of $\langle \mathbb{N},+,\cdot,\uparrow,1\rangle$, first observe that both
 satisfy $\HSI$: it is required only to check those cases where~$0^0$ arises in evaluating the terms, and these are trivially verified.  Second, all polynomial equations (between positive polynomials) true on $\mathbb{R}^+$ hold true on all of~$\mathbb{R}$, so on $\mathbb{R}_0^+$ in particular.  So $\langle \mathbb{N}_0,+,\cdot,\uparrow_0,1\rangle$ and $\langle \mathbb{N},+,\cdot,\uparrow_1,1\rangle$ can be made into  $\HSI^*$-algebras.
 From Wilkie's Theorem~\ref{thm:WilkiesTheorem}, they satisfy all true laws of $\langle \mathbb{N},+,\cdot,\uparrow,1\rangle$.
\end{proof}
This result fails if $0$ is included in the signature because $\langle \mathbb{N}_0,+,\cdot,\uparrow_0,1\rangle\models 0^x\approx 0$, but not $x^0\approx 1$, while the reverse true for $\langle \mathbb{N}_0,+,\cdot,\uparrow_1,1\rangle$.

A second easy consequence is the following.
\begin{remark}\label{rem:HSIproof}
If $s\approx t$ is a valid law of $\langle \mathbb{N},+,\cdot,\uparrow,1\rangle$, and there is a  proof of $s\approx t$ from the expanded laws $\HSI^*$ involving only operations from $\{+,\cdot,\uparrow,1\}\cup\{t_\rho\mid \rho\in \mathcal{P}^+\}$, then $\HSI\vdash s\approx t$.
\end{remark}
\begin{proof}
At each step of the proof from $\HSI^*$, use $\rho$ in place of $t_\rho$, noting that $\rho\in \mathcal{P}^+$ is an $\mathscr{L}$-term.
\end{proof}

The normal form is defined relative to some infinite sequence of pairs of polynomials $(p_1,q_1),(p_2,q_2), \dots (p_k,q_k), \dots, (k\in \mathbb{N})$ in  $\mathscr{L}^*$, satisfying the following conditions (W1.1), (W1.2), (W2.1), (W2.2).
For each $k=0,1,\dots$, we require:
\begin{itemize}
\item[(W1.1)] $p_{k+1}= t_{\rho}( y_1, y_2, \dots , y_n, x_1, x_2, \dots , x_{k'})$ for some $\rho\in \Irr_{n+k'}$ and $k'\leq k$, or $p_{k+1}=y_i$ for some $i\in \mathbb{N}$ and $i\leq n$.
\item[(W1.2)] $q_{k+1}= t_{\mu}( y_1, y_2, \dots , y_n, x_1, x_2, \dots , x_{k''})$ 
for some $\mu \in \mathcal{M}_{n+k''}$ and $k''\leq k$.
\item[(W1.3)] For distinct $j,k$, the identities $p_j\approx p_k$ and $q_j\approx q_k$ do not both hold on $\mathbb{N}$.
\end{itemize}
We also require the following:
\begin{itemize}
\item[(W2.1)] For each $k,k'\in \mathbb{Z^+}$ and for each $\rho\in \Irr_{n+k}$ and $\mu \in \mathcal{M}_{n+k'}$ there is some $\ell\in \mathbb{N}$ such that 
\[
\HSI^* \vdash p_\ell\approx  t_{\rho( y_1, y_2, \dots , y_n, x_1, x_2, \dots , x_k)}
\] 
and 
\[
\HSI^* \vdash q_\ell\approx t_{\mu}( y_1, y_2, \dots , y_n, x_1, x_2, \dots , x_{k'}).
\]
\item[(W2.2)] For each $k\in \mathbb{N}$ and $i\leq n$, and $\mu\in \mathcal{M}_{n+k}$ there is some $\ell\in \mathbb{N}$ such that: $\HSI^* \vdash p_\ell\approx y_i$ and $\HSI^*\vdash q_\ell \approx t_\mu$.
\end{itemize}
While these seem somewhat technical, it is apparent that there is enormous freedom in the construction of the sequence.  The second phase of the proof of Wilkie's HSI Theorem~\ref{thm:WilkiesTheorem} needs such a sequence fixed, but the first phase of the proof (reduction to normal form) does not make significant use, only requiring that polynomials encountered during an application of the reduction process can be placed within a larger list conforming to (W1.1), (W1.2), (W2.1), (W2.2).  It is clear that any sequence of distinct irreducible polynomials and monomials can be extended, in principle, to such a list, so that the second phase of Wilkie's proof will ensure that the reduction process is guaranteed to lead to the same normal form when initiated on two terms $s,t$ where $s\approx t$ is a valid law.
We summarise this and some further algorithmic observations in the following lemma.
\begin{lem}\label{lem:listextend}
For any finite set $\mathcal{F}\subseteq\mathcal{P}^+$ of strictly positive polynomials, there is an algorithm that produces all factors of members of $\mathcal{F}$ in $\Irr$, all monomials appearing as terms within polynomials in $\mathcal{F}$ and creates a list $(p_1,q_1),\dots,(p_n,q_n)$ for some $n$, that can be in-principle extended to a full enumeration satisfying \up(W1.1\up)--\up(W2.2\up).
\end{lem}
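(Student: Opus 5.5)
The proof splits into three tasks: factorisation, extraction of monomials, and assembly of a finite list that does not block any full enumeration. For the first, take each $\rho\in\mathcal{F}$ and factor it completely in $\mathbb{Z}[y_1,\dots,y_m,x_1,\dots,x_n]$. This is effective by the classical algorithms for multivariate polynomials over $\mathbb{Z}$: first extract the content, which needs integer factorisation, and then factor the primitive part, for example by Kronecker's substitution or Hensel lifting.

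The resulting irreducible factors need adjustment. Since $\rho$ is strictly positive, any factor that is not positive on $\mathbb{R}^+$ must be paired with a sign change. Factors are only determined up to units $\pm1$, so I normalise each factor to be positive, which is possible because a factor that vanishes nowhere on $\mathbb{R}^+$ has constant sign there. I also have to handle factors that do vanish somewhere on $\mathbb{R}^+$. A strictly positive $\rho$ has no zeros in $(\mathbb{R}^+)^k$, so no factor can vanish there; this makes every factor, after choosing the sign, a member of $\mathcal{P}$. Finally I discard constants and single variables, and what remains are the members of $\Irr$ dividing members of $\mathcal{F}$. Integer constant factors need a separate decision: they are products of primes, and I would treat each prime $p$ as the constant $1+\dots+1$. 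Deciding whether such factors enter the list, or are handled through $\HSI^*$ directly, is a point to settle against the definition of $\Irr_k$.

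The monomials are obtained simply by reading off the monomials, with coefficients stripped to $1$, from the expansions of the members of $\mathcal{F}$ and of their factors. These are members of $\mathcal{M}$.

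For the list, I form all pairs $(p,q)$ where $p$ is either $t_\rho$ for a computed irreducible factor $\rho$ or a variable $y_i$, and $q=t_\mu$ for a computed monomial $\mu$. I remove duplicates by testing polynomial equality, which is decidable by comparing coefficients; this gives (W1.3). I then order the pairs so that a pair involving $x$-variables up to $x_{k}$ appears at a position at least $k+1$, which gives (W1.1) and (W1.2). If this forces the list to be lengthened, I pad it with auxiliary pairs $(y_1,t_\mu)$ using fresh monomials.

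The extension argument is then a dovetailed enumeration. The sets of all irreducible positive polynomials and of all monomials, with $y$-variables and $x_1,\dots,x_k$, are countable and effectively enumerable. I interleave them after the finite list, skipping any pair already present, and respecting the index constraint $k'\le k$ by inserting each polynomial only once the position index is large enough. Every required pair eventually appears, so (W2.1) and (W2.2) hold.

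The main obstacle is the bookkeeping for condition (W1.1). The variable $x_{k}$ may only appear in $p_{k+1}$ and later entries, yet in Wilkie's scheme the $x_j$ stand for terms substituted in, so the indices of the $x$-variables must be consistent with the list positions. I would handle this by allowing arbitrary renumbering and padding, since padding with harmless pairs can always push a pair to a later position.
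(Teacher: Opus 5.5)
Your route is the paper's: factor the members of $\mathcal{F}$ completely over $\mathbb{Z}$, collect the monomials, place the resulting pairs in an initial segment padded so that the index conditions of (W1.1) and (W1.2) hold, and extend by listing all remaining admissible pairs without repeats. Your connectedness argument for normalising the signs of factors, and your dovetailing, make explicit what the paper leaves implicit.

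\textbf{Discarding constants is wrong.} $\Irr_k$ excludes only $1$ and single variables. A prime $p$ is positive and irreducible over $\mathbb{Z}$. So the prime divisors of the content of each member of $\mathcal{F}$ are among the factors in $\Irr$ that the lemma requires you to produce. They are also needed later. In Subcase~2.2 of Lemma~\ref{lem:WilkieNormalForm}, every irreducible factor of $\rho_f$ other than $1$ must coincide with some $p_\ell$, and no other subcase handles constant factors. For instance, $2^{y_1}$ can only be reached through a pair with $p_\ell=t_2$ and $q_\ell=y_1$. This is why the paper insists on full factorisation including the constant content. The question you leave open therefore has a definite answer: keep every such prime $p$ as a $p$-entry and pair it with the collected monomials. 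Treat $p$ as the strictly positive term $1+\dots+1$, so that $t_p\approx p$ lies in $\HSI^*$.

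\textbf{Two smaller points.}
\begin{enumerate}
\item You must drop the constant monomial from your monomial list. Since $1\notin\mathcal{M}$, it cannot be a $q$-entry under (W1.2).
\item If the ``renumbering'' in your last paragraph means relabelling the $x$-variables, that is not available. By (W3.2), $x_j$ stands for $\tau_j$, which is determined by the $j$th pair, and relabelling changes the polynomials you were asked to handle. Padding alone suffices, because a pair whose entries involve $x$-variables only up to $x_k$ is admissible at every position $\ge k+1$. This is what the paper does with the fillers $x_1,x_1x_2,x_1x_2x_3,\dots$.
\end{enumerate}
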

\begin{proof}
In the polynomial ring $\mathbb{Z}[X]$, the factorisation into  factors that are irreducible over~$\mathbb{Z}$ is effectively computable.  Moreover for fixed $X$ it is ostensibly polynomial time via Kaltofen's algorithm~\cite{kal} and the Lenstra-Lenstra-Lovasz algorithm~\cite{LLL} in the case of primitive polynomials, however we need full factorisation including any constant coefficients, and factorisation of integers has no known polynomial time solution currently.  
All factors of all polynomials and subpolynomials from $\mathcal{F}$ can be factorised using this algorithm and added to $\mathcal{F}$.  
These polynomials, plus all relevant variables from $y_1,\dots$ that appear in polynomials from $\mathcal{F}$ can be placed into the initial segment of the sequence $(p_1,q_1),(p_2,q_2),\dots$, possibly using some extra polynomials $x_1,x_1x_2,x_1x_2x_3,\dots$ to fill the initial terms, in order that the technical conditions $k'\leq k$ and $k''\leq k$ are not violated in (W1.1) and (W1.2).  
The resulting  finite sequence $(p_1,q_1)$, $(p_2,q_2)$, \dots, $(p_n,q_n)$ can always be extended to a full list satisfying (W1.1)--(W2.2):  the requirements are only to avoid repeats (up to equivalent polynomials) and to ensure all combinations of the required format of the $p$ terms and the $q$ are listed.  
\end{proof}
We note that in order to obtain the proof $\HSI^*\vdash s\approx t$,  we will not be required to replicate all of the steps in the proof just given.  The lemma is just to guarantee that the second phase of the proof of Wilkie's Theorem will hold (for some infinite listing of pairs), thereby implying that the normal reduction will be unique.

Next Wilkie defines a sequence of exponential terms $\tau_1,\tau_2,\dots$, by way of some intermediate terms $u_1,u_2,\dots,$ and $s_1,s_2,\dots$
\begin{itemize}
\item[(W3.1)] $u_1:=p_1$, $s_1=q_1$, $\tau_1=u_1^{s_1}$
\item[(W3.2)]  $u_{i+1}:=p_{i+1}(y_1,\dots,y_n,\tau_1,\dots,\tau_i)$ and $s_{i+1}:=q_{i+1}(y_1,\dots,y_n,\tau_1,\dots,\tau_i)$, and then $\tau_{i+1}:=u_{i+1}^{s_{i+1}}$.
\end{itemize}
Note that in (W3.2), the variables in $p_{i+1}$ are $y_1,\dots,y_n,x_1,\dots,x_{k}$ for some $k\leq  i$, and the notation $p_{i+1}(y_1,\dots,y_n,\tau_1,\dots,\tau_i)$ means that $\tau_1,\dots,\tau_k$ have been substituted for $x_1,\dots,x_k$ (while $x_{k+1},\dots,x_i$ do not appear in $p_{i+1}$, meaning that $\tau_{k+1},\dots,\tau_i$ are listed only to make for a canonical exposition of the term).

The polynomial $\rho_f\in \mathcal{P}_{n+m}^+$ in the following lemma will be said to \emph{represent} the term $f$ and provides a normal form for $\mathscr{L}$-terms, relative to the ordering  $(p_1,q_1)$, $(p_2,q_2)$,\dots\ and $\tau_1$, $\tau_2$, \dots.
\begin{lem}\label{lem:WilkieNormalForm}
If $f$ is an $\mathscr{L}$-term in variables $y_1,y_2, \dots ,y_n$, then there is a recursive process that produces a strictly positive polynomial $\rho_f \in \mathcal{P}_{n+m}^+$ for some $m \in \mathbb{Z^+}$  such that
\[
\HSI^* \vdash  f\approx t_{\rho_f}   (y_1, \dots , y_n,\tau_1, \tau_2, \dots, \tau_m).
\]
\end{lem}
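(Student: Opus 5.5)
Structural induction on the $\mathscr{L}$-term $f$, proving a slightly stronger statement: every $f$ is $\HSI^*$-equivalent to $t_{\rho_f}(y_1,\dots,y_n,\tau_1,\dots,\tau_m)$ with $\rho_f\in\mathcal{P}^+_{n+m}$. Because $\rho_f$ is strictly positive, the law $t_{\rho_f}\approx\rho_f$ of $\HSI^*$ lets us regard it as a $\{+,\cdot,1\}$-term in the $y_i$ and $\tau_j$, and pass freely between the two readings. Throughout, we silently extend the finite initial segment of the list $(p_i,q_i)$ using Lemma~\ref{lem:listextend}, adding whatever irreducible factors and monomials arise, so that (W2.1) and (W2.2) can be invoked effectively.

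\textbf{Easy cases.} The base cases are direct: $1$ and $y_i$ are already strictly positive polynomials in the $y$'s. For $f=g+h$ and $f=g\cdot h$, put the normal forms of $g$ and $h$ over a common list $\tau_1,\dots,\tau_m$ (take $m$ as the larger index). The sum or product of the two representing polynomials is again in $\mathcal{P}^+$, and the step is justified by the semiring laws of $\HSI$ together with $t_\rho\approx\rho$.

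\textbf{Main case: $f=g^h$.} Write $g\approx\rho_g(\bar y,\bar\tau)$ and $h\approx\rho_h(\bar y,\bar\tau)$.
\begin{enumerate}
\item Factor $\rho_g$ over $\mathbb{Z}$ as $c\cdot\prod_j \pi_j^{e_j}\cdot\prod_i y_i^{a_i}\cdot\prod_k x_k^{b_k}$, where each $\pi_j\in\Irr$ and $c\in\mathbb{N}$ is further split into primes, which are irreducible constants lying in $\Irr$. The factors $\pi_j$ need not be strictly positive. The identity $t_{\rho_g}\approx \prod t_{\pi_j}^{e_j}\cdots$ is exactly one of the factorisation laws of $\HSI^*$.
\item Distribute the exponent over the product with $(xy)^z\approx x^zy^z$. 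A factor $x_k$ stands for $\tau_k=u_k^{s_k}$, so $(x_k)^{h}=u_k^{s_k h}$ by $(x^y)^z\approx x^{yz}$. This reduces everything to terms $p^{\,\rho}$ where $p$ is either some $t_\pi(\bar y,\bar\tau)$ with $\pi\in\Irr$ or a single $y_i$, and $\rho=\rho'\cdot\rho_h$ is strictly positive.
\item Expand that strictly positive exponent as a sum of monomials with natural coefficients, $\rho=\sum_l c_l\mu_l$. Apply $x^{y+z}\approx x^yx^z$ repeatedly to get $p^{\rho}\approx\prod_l (p^{\mu_l})^{c_l}$; the unit monomial gives $p^1\approx p$.
\item For each pair $(p,\mu_l)$, conditions (W2.1) and (W2.2) supply an index $\ell$ with $p_\ell\approx p$ and $q_\ell\approx\mu_l$ in the appropriate variables. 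Hence $p^{\mu_l}\approx\tau_\ell$ after the substitution (W3.2).
\end{enumerate}
Collecting terms, $f$ becomes a product of $\tau$'s and $y$'s, which is a strictly positive monomial.

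\textbf{Expected obstacle.} The hard part is the bookkeeping in step 4. The polynomial $\pi$ involves variables $x_1,\dots,x_k$ standing for earlier $\tau$'s, and we must check that the resulting $p_\ell(y,\tau_1,\dots)$ is literally the term obtained. This needs the syntactic convention of (W3.2) and the ordering requirement $k'\le k<\ell$ to be respected by the extended list, and possibly a renumbering argument.

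A second subtlety is the non-strictly-positive factors $\pi_j$. They never need to be evaluated in $\HSI$; they appear only inside $t_\pi$ and as bases of $\tau$'s. Their presence is exactly why the conclusion is stated for $\HSI^*$ and not $\HSI$.

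Termination of the recursion is clear, since each step recurses on proper subterms. The process is effective by Lemma~\ref{lem:listextend}.
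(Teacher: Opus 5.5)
Your route is the paper's: induction on $f$, with the exponential case handled by the same ingredients. These are the factorisation laws of $\HSI^*$, $(xy)^z\approx x^zy^z$, $(x^y)^z\approx x^{yz}$ for bases $x_k$, $x^{y+z}\approx x^yx^z$ and $x^1\approx x$ to split the exponent into monomials, and (W2.1)/(W2.2) to recognise each $p^{\mu}$ as a $\tau_\ell$. However, you perform the two main moves in the opposite order, and this breaks your final collecting step.

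The paper first writes the exponent as $c_0+\sum c_i\mu_i$ and splits it. The constant part then contributes $c_0$ copies of the whole base $t_{\rho}$, which is strictly positive. Only afterwards is the base factorised, so every irreducible factor is raised to a monomial and becomes a $\tau$. You factorise first. So when $\rho_h$ has constant term $c_0>0$, each non-$x_k$ factor of $\rho_g$ (a $t_{\pi_j}$, a prime, or a $y_i$) survives as $c_0$ bare copies of itself. These are not $\tau$'s, and $\pi_j$ need not be strictly positive. Hence your claim that $f$ becomes a product of $\tau$'s and $y$'s is false, and strict positivity of $\rho_f$ is not established. For $f=(1+y^3)^{1+y}$ your procedure stops at $t_{1+y}\,t_{1-y+y^2}\,\tau_{\ell_1}\tau_{\ell_2}$, and $t_{1-y+y^2}$ is not of the form the lemma requires.

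The repair uses tools you already have. One option is to split off $g^{c_0}$ before factorising, as the paper does. The other is to note that the leftovers are exactly the irreducible factors (with multiplicity) of $\sigma^{c_0}$, where $\sigma=\rho_g/\prod_k x_k^{b_k}$. This $\sigma$ has nonnegative coefficients, because dividing a polynomial with nonnegative coefficients by a monomial divisor keeps them nonnegative. The factorisation law of $\HSI^*$ read right to left, followed by $t_{\sigma^{c_0}}\approx\sigma^{c_0}$, then restores a strictly positive polynomial. The $x_k$ factors leave no such residue, since $s_k\in\mathcal{M}$ is never $1$.

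You also need to treat $\rho_g=1$ separately using $1^x\approx 1$ (the paper's Subcase 2.1), since your factorisation is then empty.

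Your renumbering worry is unfounded. $\HSI^*\vdash p_\ell\approx t_\pi$ already gives the required equality after substitution. Also, (W1.1) forces $\ell$ to exceed the index of every $x_k$ on which $\pi$ actually depends.
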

\begin{proof}
We repeat the proof of Theorem~2.8 of~\cite{wil00} but expanded in detail to demonstrate that each stage can be made automatic and that only laws from $\HSI^*$ are used. 
The proof is by induction on $f$, and we will also build into this reduction the property that representing polynomials are given as a sum of monomials.
The base case when $f=1$ or a single variable $y_i$ is trivially satisfied: the strictly positive polynomial $\rho_f \in \mathcal{P}_n$ can be chosen as $1$ or $y_i$, respectively.  Now consider a term $f\mathbin{\Box} g$, where $\Box\in\{+,\cdot,\uparrow\}$.  By the induction hypothesis, there are $\rho_f , \rho_g$ representing $f $ and $g $, respectively.  

Case 1.  Representing $(f+g)$ and $(f\cdot g)$.\\
In this case the polynomials $\rho_{f+g}:=\rho_f+ \rho_g$ and $\rho_{f\cdot g}:=\rho_f \cdot \rho_g$ are strictly positive and represent  $(f+g)$ and $(f\cdot g)$, respectively.  
Note that the case of $\rho_{f\cdot g}$ will require expansion of the product $\rho_f \cdot \rho_g$ in order to obtain a sum of monomials

Case 2.  Representing $f^g$.\\
If $f$ is  $1$, then $f^g$ can be represented by the strictly positive polynomial $1$, using the $\HSI$ law $1^x\approx 1$.  Now assume that $f$ is not $1$.  The positive polynomial $\rho_g$ representing $g$ is a sum of monomials:
\[
\rho_g = c_0+\sum_{i=1}^\ell c_i\mu_i,
\] 
where $\ell\geq 0$, the $c_i$ are positive integer constants, except for $c_0$ which is a non-negative integer constant, and the $\mu_i$ are monomials with coefficient $1$.  By the induction hypothesis, the term $f^g$ is equivalent under $\HSI^*$ to the replacement in~$t_{\rho_f}^{t_{\rho_g}}$ of the variables $x_1,x_2,\dots$ by the expressions $\tau_1,\tau_2,\dots$.  Using left to right applications of $x^{y+z}\approx x^yx^z$, 
we have 
\begin{equation}
t_{\rho_f}^{t_{\rho_g}} \approx {}\stackrel{c_0}{\overbrace{t_{\rho_f}^1\cdot t_{\rho_f}^1\cdot\cdots\cdot t_{\rho_f}^1}}\cdot\prod_{i=1}^{\ell}(\stackrel{c_i}{\overbrace{t_{\rho_f}\cdot t_{\rho_f}\cdot\cdots \cdot t_{\rho_f}}})^{\mu_i}.\label{eqn:wilkie1}
\end{equation}
Then using $x^1\approx x$ and left to right applications of $(xy)^z\approx x^zy^z$ we find that $t_{\rho_f}^{t_{\rho_g}}$ is equal to 
\begin{equation}
\stackrel{c_0}{\overbrace{t_{\rho_f}\cdot t_{\rho_f}\cdot\cdots\cdot t_{\rho_f}}}\cdot \prod_{i=1}^{\ell}(\stackrel{c_i}{\overbrace{t_{\rho_f}^{\mu_i}\cdot t_{\rho_f}^{\mu_i}\cdot\cdots\cdot t_{\rho_f}^{\mu_i}}}).\label{eqn:wilkie2}
\end{equation}
Note that each $\mu_i$ is in $\mathcal{M}_{n+k'}$ for some $k'$ possibly depending on $i$.
Next, the polynomial $\rho_f$ can be factorised as a product of irreducible (over $\mathbb{Z}$) factors as 
\[
\rho_f=\rho_1\cdot\rho_2\cdot \cdots\cdot \rho_s,
\]
where for each $i$ there is $m_{k'}$ such that $\rho_i\in \Irr_{n+m_{k'}}$, with $k'$ possibly depending on~$i$.  
As noted earlier, the factorisation of rational polynomials into irreducible factors can be performed in polynomial time, using the LLL algorithm of Lenstra, Lenstra and Lovasz~\cite{LLL} (this is up to a constant factor: factorisation of integers is of course not known to be solvable in polynomial time, though it can be solved recursively).  
This achieves a factorisation into integer coefficient polynomials, because rational factors of integer polynomials have integer coefficients (up to a rational factor), by Gauss' Irreducibility Lemma (see \cite[Theorem III.15]{birmac} or any standard algebra text).

Now each of the terms $t_{\rho_f}^{\mu_i}$ in the product \eqref{eqn:wilkie2} can be written using left to right applications of $(xy)^z\approx x^zy^z$ as $t_{\rho_1}^{\mu_i}\cdot t_{\rho_2}^{\mu_i}\cdot \cdots\cdot t_{\rho_s}^{\mu_i}$.  
Next we consider each $\rho_j^{\mu_i}$ individually, finding a representing polynomial $\alpha$ in each case; then the inductive steps in Case 1, or further application of Case 2 in subterms can be used to combine them.  

Subcase 2.1: $\rho_j=1$.\\
This can only happen if $f=1$, in which case the representing polynomial for $f^g$ is~$1$ (using $1^x\approx 1$) as was considered already.  

Subcase 2.2.  $\rho_j\in \Irr_{n+k'}$ or is a single variable $y_{i'}$.\\
In this case, conditions (W2.1), (W2.2), guarantee that there is an $i''$ such that $\rho_j=p_{i''}$ and $\mu_i=q_{i''}$ and $t_{\rho_i}(y_1,\dots,y_n,\tau_1,\dots)^{t_{\mu_i}(y_1,\dots,y_n,\tau_1,\dots)}$ is equal to $\tau_{i''}$ for some $i''$.  The representing polynomial can be chosen as $x_{i''}$.  

Subcase 2.3.  $\rho_j$ is a single variable $x_{i'}$.\\
In this case we have that $\rho_j(y_1,\dots,y_n,\tau_1,\dots,\tau_{i'},\dots)^{\mu_j(y_1,\dots,y_n,\tau_1,\dots,\tau_{i'},\dots)}$ 
is simply $\tau_{i'}^{\mu_j(y_1,\dots,y_n,\tau_1,\dots,\tau_{i'},\dots)}$.  
But $\tau_{i'}$ is of the form $\rho(y_1,\dots,y_n,\tau_1,\dots)^{\mu(y_1,\dots,y_n,\tau_1,\dots)}$, so that we can use a left to right application of $(x^y)^z\approx x^{yz}$ to obtain 
\[
\rho(y_1,\dots,y_n,\tau_1,\dots)^{\mu(y_1,\dots,y_n,\tau_1,\dots)\cdot \mu_j(y_1,\dots,y_n,\tau_1,\dots)}.
\]  
Because $\mu$ and $\mu_j$ are monomials, then so also is $\mu\mu_j$ a monomial $\mu'\in \mathcal{M}_{n+k''}$ for some $k''$ and we have arrived in the situation considered in Subcase 2.2.
\end{proof}

The fundamental role of representing polynomials is provided in~\cite[Theorem 4.4]{wil00}: the functions $y_1,\dots,y_n,\tau_1,\tau_2,\dots$ are algebraically independent over $\mathbb{R}^+$.  
Then, as explained in the proof of Theorem 1.9 of \cite{wil00} it follows that if $\langle \mathbb{N},+,\cdot,\uparrow,1\rangle\models f\approx g$, then Lemma~\ref{lem:WilkieNormalForm} shows that $\HSI^*\vdash f\approx t_{\rho_f}(y_1,\dots,y_n,\tau_1,\dots)$ and $\HSI^*\vdash g\approx t_{\rho_g}(y_1,\dots,y_n,\tau_1,\dots)$.  
So 
\[
\langle \mathbb{N},+,\cdot,\uparrow,1,\{t_{\rho}\mid \rho\in \mathcal{P}\}\rangle\models t_{\rho_f}(y_1,\dots,y_n,\tau_1,\dots)\approx t_{\rho_g}(y_1,\dots,y_n,\tau_1,\dots).
\]  
Then algebraic independence ensures that $\langle \mathbb{N},+,\cdot,\uparrow,1\rangle\models \rho_f\approx \rho_g$.  
That is, $f$ and $g$ have the same representing polynomial (up to application of $\overline{\HSI}$).
We summarise this, in the following theorem.
\begin{thm}\label{thm:samerep}
$\langle \mathbb{N},+,\cdot,\uparrow,1\rangle\models f\approx g$ if and only if $f$ and $g$ have the same representing polynomials.  
\end{thm}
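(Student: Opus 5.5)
The argument has two directions, and both run through Lemma~\ref{lem:WilkieNormalForm}. We fix an enumeration $(p_1,q_1),(p_2,q_2),\dots$ satisfying (W1.1)--(W2.2). By Lemma~\ref{lem:listextend} it can be chosen to contain all irreducible factors and monomials that arise when we reduce the finitely many subterms of $f$ and $g$. This fixes the terms $\tau_1,\tau_2,\dots$.

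The \emph{if} direction is short. Suppose $f$ and $g$ have the same representing polynomial $\rho$, meaning $\rho_f$ and $\rho_g$ agree as polynomials, or are equal modulo $\overline{\HSI}$. By Lemma~\ref{lem:WilkieNormalForm},
\[
\HSI^*\vdash f\approx t_{\rho}(y_1,\dots,y_n,\tau_1,\dots,\tau_m)\approx g .
\]
Every law of $\HSI^*$ holds in the expansion of ${\bf N}$ in which each $t_\rho$ is interpreted as the polynomial function $\rho$. For the factorisation laws this is because products of the factors agree pointwise; for $t_\rho\approx\rho$ it is immediate. Soundness of equational logic then gives ${\bf N}\models f\approx g$.

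For the \emph{only if} direction, suppose ${\bf N}\models f\approx g$. Lemma~\ref{lem:WilkieNormalForm} and the soundness observation above give
\[
\mathbb{N}\models t_{\rho_f}(\bar y,\bar\tau)\approx t_{\rho_g}(\bar y,\bar\tau),
\]
so the polynomial $\rho_f-\rho_g$ vanishes when the functions $y_1,\dots,y_n,\tau_1,\dots,\tau_m$ are substituted, at least on $\mathbb{N}^n$. Here we invoke Wilkie's algebraic independence theorem~\cite[Theorem~4.4]{wil00}: the functions $y_1,\dots,y_n,\tau_1,\tau_2,\dots$ are algebraically independent over $\mathbb{R}^+$. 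Hence $\rho_f-\rho_g$ is the zero polynomial, so $\rho_f$ and $\rho_g$ coincide as polynomials. Since both are given as sums of monomials, they are the same normal form up to the commutative semiring laws, and hence up to $\overline{\HSI}$.

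The main obstacle is the passage from agreement on $\mathbb{N}$ to algebraic independence, which is stated over $\mathbb{R}^+$. I would handle it in one of two ways. The first is to note that the terms $\tau_i$ extend to real-analytic functions on $(\mathbb{R}^+)^n$. A polynomial identity in exponential-polynomial terms that holds on all of $\mathbb{N}^n$ then holds on $(\mathbb{R}^+)^n$, by growth and zero-counting arguments in one variable at a time; the finiteness of the zeros of nonzero exponential polynomials in one variable is Richardson and Macintyre's bound. The second is simply to cite the proof of Theorem~1.9 of~\cite{wil00}, which performs exactly this transfer. I would also check that the representing polynomial does not depend on incidental choices made in the reduction, such as the ordering of factors or how monomials are expanded. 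This follows from uniqueness of factorisation in $\mathbb{Z}[X]$ and from the fact that each pair $(\rho_j,\mu_i)$ has a unique index $i''$ under (W1.3).
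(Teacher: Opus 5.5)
Your proposal is correct and follows the paper's route. You reduce both terms to Wilkie normal form via Lemma~\ref{lem:WilkieNormalForm}, pass to the standard expansion of ${\bf N}$ by soundness of $\HSI^*$, and invoke Wilkie's algebraic independence theorem~\cite[Theorem~4.4]{wil00} to conclude that $\rho_f$ and $\rho_g$ coincide, which is exactly what the paper does. Your explicit treatment of the easy converse, and of the transfer from agreement on $\mathbb{N}^n$ to $(\mathbb{R}^+)^n$ (which the paper leaves to the proof of Theorem~1.9 of~\cite{wil00}), adds care but does not change the approach.
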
 
The algorithmic version of Wilkie's Theorem~\ref{thm:WilkiesTheorem} is an immediate corollary of Theorem~\ref{thm:samerep} and Lemma~\ref{lem:WilkieNormalForm}.
\begin{cor}
The equational theory of $\langle \mathbb{N},+,\cdot,\uparrow,1\rangle$ is decidable by way of the recursive process identified in Lemma \ref{lem:WilkieNormalForm}.
\end{cor}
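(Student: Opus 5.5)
The corollary follows by combining Lemma~\ref{lem:WilkieNormalForm} with Theorem~\ref{thm:samerep}, provided we check that every ingredient of the recursive reduction can be computed effectively. Given an input equation $f\approx g$ between $\mathscr{L}$-terms in variables $y_1,\dots,y_n$, the algorithm will run the inductive reduction of Lemma~\ref{lem:WilkieNormalForm} on $f$ and on $g$ at the same time. The two reductions share one finite initial segment $(p_1,q_1),\dots,(p_N,q_N)$ of the Wilkie sequence. Pairs are appended to this segment as they arise.

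Following the case split in that proof, the reduction proceeds by structural induction on the terms. Sums and products of representing polynomials are formed and expanded into sums of monomials, which is clearly computable. For a subterm $f'^{g'}$, we expand $\rho_{g'}$ into monomials $c_0+\sum c_i\mu_i$ and factor $\rho_{f'}$ into irreducibles over $\mathbb{Z}$. Factoring is effective: integer factorisation is computable, and Lemma~\ref{lem:listextend} together with Gauss' Lemma supplies an algorithm for factorisation in $\mathbb{Z}[X]$. Subcase~2.3 only collapses a nested exponent into a product monomial $\mu\mu_j$, which is a syntactic step.

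The one point needing care is Subcase~2.2. There we must decide whether the pair $(\rho_j,\mu_i)$ already appears in the current finite list, up to equivalence of polynomials; if it does not, we append it and introduce a new $\tau$. Equality of integer polynomials is decidable by expanding both sides, so this test is effective. Lemma~\ref{lem:listextend} guarantees that the finite list built this way can be extended to a full sequence satisfying (W1.1)--(W2.2). The polynomials $\rho_f$ and $\rho_g$ therefore count as representing polynomials in the sense of Theorem~\ref{thm:samerep}, relative to one fixed infinite listing.

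Finally, the algorithm compares $\rho_f$ and $\rho_g$ as expanded polynomials in $\mathbb{Z}[y_1,\dots,y_n,x_1,\dots,x_m]$. By Theorem~\ref{thm:samerep}, ${\bf N}\models f\approx g$ exactly when they coincide. This answers both directions: if they are equal the law is valid, and otherwise it fails. The main obstacle is the bookkeeping in Subcase~2.2. The sequence has to be fixed consistently for both terms, and the ordering constraints $k'\le k$ in (W1.1)--(W1.2) must be respected, for instance by padding with fillers as in the proof of Lemma~\ref{lem:listextend}. This keeps the computed normal forms canonical, which is what makes comparing them meaningful.
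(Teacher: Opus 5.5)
Your proposal is correct and follows the paper's route. The paper also derives the corollary directly from Lemma~\ref{lem:WilkieNormalForm} (the effective reduction, with Lemma~\ref{lem:listextend} ensuring that the finite list of pairs extends to a full Wilkie sequence) combined with Theorem~\ref{thm:samerep}. Your single shared list for $f$ and $g$, with the duplicate test in Subcase~2.2, makes explicit what the paper leaves implicit when it fixes one enumeration.
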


\begin{remark}\label{rem:normalform}
Subject to the ordering of the sequence $(p_1,q_1),(p_2,q_2),\dots$, the representing polynomials provide a normal form representation of $\mathscr{L}$-terms in the expanded positive polynomial signature $\mathscr{L}^*=\{+,\cdot,\uparrow,1\}\cup \{t_{\rho}\mid \rho\in \mathcal{P}\}$, with two terms being equivalent if they reduce to the same normal form.  We refer to this as the \emph{Wilkie Normal Form} of an expression, noting that it does depend on some fixed enumeration of the special pairs $(p_1,q_1),(p_2,q_2),\dots$.  Aside from factorisation of polynomials and the $\overline{\HSI}$-equivalence of polynomials (Case 1), the reduction to normal form involves only the following steps.
\begin{itemize}
\item In Case 2, preparations for subcases: left to right applications of $x^{y+z}\mapsto x^yx^z$ (for Equation~\eqref{eqn:wilkie1}); left to right applications of $x^1\mapsto x$ and $x^{y+z}\mapsto x^yx^z$ (for Equation~\eqref{eqn:wilkie2}); after factorisation into irreducible factors, left to right applications of $(xy)^z\mapsto x^zy^z$.
\item Subcase~2.1: left to right applications of $1^x\mapsto 1$.
\item Subcase~2.3: left to right applications of $(x^y)^z\mapsto x^{yz}$.
\end{itemize}
Subcase~2.2  involves no actual step of deduction, only the observation that there exists a $\tau$ expression matching the current subcase. 
\end{remark}

We observe that it is now apparent that Wilkie's 1980 manuscript provides an alternative approach to several results obtained via different means in Henson and Rubel~\cite{henrub}.
\begin{thm}\label{thm:fbHR}
\begin{enumerate}
\item 
$\HSI$  is complete for equations involving only fixed base exponentiation: $\{ +,\cdot, 1\}\cup\{\exp_b\mid b\in \mathbb{N}\}$.
\item The $\HSI$ laws are complete for terms involving only  $\{ \cdot, \uparrow, 1,2,\dots\}$, subject to the definitions of $1+1\approx 2$, $1+1+1\approx 3$, and so on.  
\end{enumerate}
\end{thm}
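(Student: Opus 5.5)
The plan is to run the Wilkie reduction of Lemma~\ref{lem:WilkieNormalForm} and check that every $\HSI^*$-step it uses can be replaced by a plain $\HSI$-step, as in Remark~\ref{rem:HSIproof}. Under the restrictions of (1) and (2), every polynomial that gets factorised turns out to be a single variable, a constant, or a monomial. Its irreducible factors are then (up to constants) just primes and variables. So all the $t_\rho$ symbols that occur have $\rho\in\mathcal{P}^+$, and each factorisation law $t_{\rho_1}\cdots t_{\rho_n}\approx t_\rho$ is just an $\overline{\HSI}$ identity between monomials with natural-number coefficients. With this, Theorem~\ref{thm:samerep} gives validity $\Leftrightarrow$ same representing polynomial, and Remark~\ref{rem:HSIproof} lifts the $\HSI^*$ proof to an $\HSI$ proof.

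For (1), terms are built from $+,\cdot,1$ and the unary operations $\exp_b(x)=b^x$, where $b$ is a numeral. I would induct on terms, showing each term reduces to a normal form $t_{\rho}(y,\tau_1,\dots)$ in which every $\tau_i$ is of the form $p^{\mu}$, with $p$ a prime and $\mu$ a monomial in the $y$'s and earlier $\tau$'s. In Case~2 of Lemma~\ref{lem:WilkieNormalForm} the base is always $\rho_f=b$, a constant. Its factorisation is the prime factorisation $b=p_1\cdots p_s$, and the law $t_{p_1}\cdots t_{p_s}\approx t_b$ is an $\overline{\HSI}$ identity of numerals. Subcases 2.2 and 2.3 never arise with a variable base here, since the base is a numeral. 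Every step is therefore an $\HSI$ step with strictly positive $\rho$. The primes $p$ are irreducible constants, and I would include them as $p_\ell$ in the sequence; the remark following Wilkie's definition of $\Irr$ allows the list to be extended. Uniqueness of normal forms then comes from Theorem~\ref{thm:samerep}.

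For (2), terms are built from $\cdot,\uparrow$ and numerals, with no $+$. I would show by induction that every such term reduces in $\HSI$ to a monomial $c\cdot\prod_i \tau_i^{e_i}$, where $c$ is a numeral, the $y$-variables count among the bases, and each $\tau_i$ is a prime or a variable raised to a monomial exponent. Products stay monomials. For $f^g$, the exponent $\rho_g$ is a monomial $c\mu$, so $x^{y+z}\approx x^yx^z$ turns $f^{c\mu}$ into a $c$-fold product of copies of $f^\mu$. The base $\rho_f$ is a monomial, so it factors into primes and variables using only $(xy)^z\approx x^zy^z$, $(x^y)^z\approx x^{yz}$ and numeral arithmetic. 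The condition that numerals are defined by $1+1\approx2$ and so on is exactly what justifies prime factorisations like $6\approx 2\cdot3$ inside $\HSI$.

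The main obstacle is bookkeeping: making sure the distributive step in Case~2, where $\rho_g$ is a sum, does not produce a base polynomial with a nontrivial irreducible factor. In (1) this needs the observation that sums only ever appear in exponents or at top level, never inside a base, because $\exp_b$ fixes the base. I would verify this by a separate induction on term structure before running the Wilkie reduction.
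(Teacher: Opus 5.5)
Your proposal is correct and follows essentially the same route as the paper. In both, you apply Remark~\ref{rem:HSIproof} after checking that the only factorisations in the Wilkie reduction are of a numeral base into primes (item (1)) or of a monomial into primes and variables (item (2)), so every $t_\rho$ that occurs has $\rho\in\mathcal{P}^+$. Your version only adds more explicit bookkeeping on the shape of the normal forms, namely that each $\tau_i$ is a prime or a $y$-variable raised to a monomial, than the paper's brief verification does.
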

\begin{proof}
Item (1) 
 is an application of Remark~\ref{rem:HSIproof}: simply verify that the process for identifying representing polynomials identified in the proof of Lemma~\ref{lem:WilkieNormalForm} never factors any polynomial into a positive but not strictly positive polynomial.  
 This is trivial for the base case and Case~1, where no factoring occurs.  
 For Case~2, observe that $f$ is a constant~$b$ (if only fixed base exponentiation occurs), and so the factors of $b$ are just the prime factors of $b$ counted with multiplicity, which remain strictly positive, and the equational properties of constants are provable by $\HSI$. 
 
 For item (2), let $s\approx t$ be a true identity in the signature $\{\cdot, \uparrow ,1,2,3,\dots\}$.  
 All representing polynomials in the proof of $s\approx t$ from $\HSI^*$ are monomials, and so by Remark \ref{rem:HSIproof} it follows that $s\approx t$ can be proved from $\HSI$.  
\end{proof}
We do not suggest that Wilkie's proof provides an easier route to these results, as the second phase of the proof of Wilkie's HSI Theorem~\ref{thm:WilkiesTheorem} is quite nontrivial.
Yet another approach to the two results in Theorem~\ref{thm:fbHR} can be found in the 1973 PhD thesis of Charles Martin~\cite{mar}.  
Let $T$ denote the set of all $\mathscr{L}$-terms, in which for any subterm of the form $s^t$, the expression $s$ contains no applications of $+$, except between constants.  
Clearly $\{ \cdot, \uparrow , 1,2,3,\dots\}$-terms and $\{ +,\cdot, 1\}\cup\{\exp_b\mid b\in \mathbb{N}\}$ terms can be placed within $T$ (subject to how constants are to be represented).  
Martin shows that $\HSI$ is complete for equations involving terms from $T$, a fact that can also be reproved using Remark~\ref{rem:HSIproof}.

\begin{thm} \up(Martin~\cite[Theorems 4 and 1.50]{mar}\up)
$\HSI$  is complete for equations involving terms from~$T$.
\end{thm}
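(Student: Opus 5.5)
Following the pattern of Theorem~\ref{thm:fbHR}, I would deduce this from Remark~\ref{rem:HSIproof}. Let $s\approx t$ be a valid law of ${\bf N}$ with $s,t\in T$. By Theorem~\ref{thm:samerep}, running the reduction of Lemma~\ref{lem:WilkieNormalForm} on $s$ and on $t$ gives the same representing polynomial, and so a proof of $s\approx t$ from $\HSI^*$. It is therefore enough to show two things. First, when the input term lies in $T$, every polynomial $\rho$ for which a symbol $t_\rho$ appears in this reduction is strictly positive. Second, every factorisation law $t_{\rho_1}\cdots t_{\rho_n}\approx t_\rho$ that the reduction uses involves only strictly positive factors. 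Given these, each $t_\rho$ can be replaced by the $\{+,\cdot,1\}$-term $\rho$, each $\HSI^*$ step becomes an $\HSI$ step, and we obtain $\HSI\vdash s\approx t$.

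The argument is an induction on subterms, strengthening the induction hypothesis. I would prove that for every $f\in T$ the representing polynomial $\rho_f$ is strictly positive. In addition, if $f$ contains no $+$ except between constants, then $\rho_f$ has the form $c\cdot\mu$, where $c$ is a positive integer and $\mu$ is a monomial (or $1$) in the variables $y_i$ and $x_j$.

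The base cases and Case~1 of Lemma~\ref{lem:WilkieNormalForm} are immediate: sums and products of strictly positive polynomials are strictly positive, and products of constant-times-monomial expressions keep that shape. The only sum that can be formed inside a "$+$-free" subterm is a sum of constants, which is again a constant. For Case~2, with $f^g$ and $f^g\in T$, the base $f$ is $+$-free apart from constants, so $\rho_f=c\mu$. The exponent $g$ is arbitrary, but $\rho_g$ is strictly positive by induction, so the expansion in \eqref{eqn:wilkie1}--\eqref{eqn:wilkie2} uses only HSI laws. Factorising $\rho_f=c\mu$ into irreducibles gives the prime factors of $c$ together with the individual variables of $\mu$. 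All of these are strictly positive, and the corresponding factorisation law is provable in $\HSI$, as noted in the proof of Theorem~\ref{thm:fbHR}(1).

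Subcases~2.2 and~2.3 then produce only $\tau$'s built from primes or variables raised to monomials. In Subcase~2.3 the base $\rho$ of $\tau_{i'}$ is itself, by the same induction, a prime or a $y$-variable. So the new representing polynomial for $f^g$ is a product of a constant and the variables $x_{i''}$, which is strictly positive and again of the form constant times monomial. This closes the induction.

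The main obstacle is bookkeeping. The $\tau$'s that appear must be pinned down: each one has to have a strictly positive base $p_{i''}$, and the list $(p_i,q_i)$ from Lemma~\ref{lem:listextend} only ever needs such entries for terms in $T$. Subcase~2.3 must also never reintroduce a non-strictly-positive base. I would handle this by adding to the induction hypothesis the statement that every $\tau_j$ occurring in $\rho_f$ has base a prime or a $y$-variable, and checking that this is preserved at each step.
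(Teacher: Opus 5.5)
Your proposal is correct and follows the paper's own proof: apply Remark~\ref{rem:HSIproof}, observing that in Case~2 the base $f$ of every exponential subterm of a $T$-term is $+$-free apart from constants. Its representing polynomial is therefore a constant times a monomial, and factorising it yields only primes and single variables, all strictly positive. Your strengthened induction hypothesis, the bookkeeping on the bases of the $\tau$'s, and the explicit treatment of the prime factors of the constant (which the paper's one-line proof summarises as ``individual variables only'') just make that same argument more careful.
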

\begin{proof}
This can again be obtained by Remark~\ref{rem:HSIproof}: the base cases and Case~1 are trivial as they avoid factorising, while in Case 2, for representing $f^g$, the term~$f$ avoids $+$ (except between constants) so is represented by a monomial, and its factorisation produces individual variables only.
\end{proof}

\section{Idempotent HSI algebras}\label{sec:idempotent}
The main result of this section is the following theorem (it is a trivial reformulation of Theorem~\ref{thm:main}), which can be equivalently phrased as saying that there are no multiplicatively idempotent Gurevi\v{c} algebras.
\begin{thm}\label{thm:x2x}
The variety of multiplicatively idempotent HSI algebras is a subvariety of the variety generated by ${\bf N}$.
\end{thm}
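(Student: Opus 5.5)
The plan is to apply Wilkie's HSI Theorem~\ref{thm:WilkiesTheorem} directly. Let $\mathbf A$ be an HSI algebra satisfying $x^2\approx x$. It suffices to expand $\mathbf A$ to an $\mathscr{L}^*$-structure $\mathbf A^*$ satisfying $\HSI^*$. Once this is done, any valid law $s\approx t$ of ${\bf N}$ (in the signature $\mathscr{L}$) is derivable from $\HSI^*$, hence holds in $\mathbf A^*$, and therefore holds in its $\mathscr{L}$-reduct $\mathbf A$. The exponentiation laws of $\HSI$ hold in $\mathbf A$ by hypothesis. So the only work is to interpret every $t_\rho$ with $\rho\in\mathcal P$ (positive, not necessarily strictly positive) so that two families of laws hold:
\begin{itemize}
\item[(a)] $t_\rho\approx\rho$ for $\rho\in\mathcal P^+$;
\item[(b)] $t_{\rho_1}\cdots t_{\rho_k}\approx t_\rho$ whenever $\rho=\rho_1\cdots\rho_k$ is a factorisation into irreducible positive factors.
\end{itemize}
This is purely a statement about the semiring reduct $\langle A;+,\cdot,1\rangle$, which is a commutative semiring with $1$ and idempotent multiplication.

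\emph{Step 1: identify the semiring variety.} Let $\mathcal V$ be the variety of commutative semirings with $1$ satisfying $x^2\approx x$. Some of its identities are immediate: every monomial collapses to a squarefree one, and $(1+1)^2\approx 1+1$ gives $4\approx 2$, so the constants collapse. Similarly $(1+x)^2\approx 1+x$ gives $1+3x\approx 1+x$. I would then aim to show that $\mathcal V$ is generated by a small explicit finite list of semirings, each generated by $1$ under $+$ and $\cdot$. Natural candidates are the $2$-element distributive lattice, $\mathbb Z_2$, and the $3$-element chain-like semiring $\{0,1,2\}$ with $2$ absorbing for addition. The evidence is that the subdirectly irreducibles should be tiny, since idempotent multiplication makes principal ideals behave like Boolean-algebra pieces. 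Given such a generating list, the elements of each relevant semiring are images of natural numbers. Consequently a multilinear $\mathbb N$-polynomial is determined modulo $\mathcal V$ by its values at $0/1$ points, pushed along the canonical homomorphisms from $\mathbb N$ to the generators.

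\emph{Step 2: define $t_\rho$.} Take $\rho\in\mathcal P$ with $n$ variables. Positivity on $(\mathbb R^+)^n$ and continuity give $\rho(\varepsilon)\in\mathbb N$ for every $\varepsilon\in\{0,1\}^n$. Choose a strictly positive $\rho'$ whose values at all Boolean points agree with $\rho$ after reduction into the generators of $\mathcal V$; for instance, take $\rho'$ to be the sum over $\varepsilon$ of suitable nonnegative multiples of the squarefree monomials. Then set $t_\rho:=\rho'$ on $\mathbf A$. By Step 1 this is independent of the choice of $\rho'$ modulo $\mathcal V$. Law (a) holds because one may take $\rho'=\rho$ when $\rho\in\mathcal P^+$. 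Law (b) holds because $\rho=\rho_1\cdots\rho_k$ is a polynomial identity over $\mathbb Z$, so it holds at every Boolean point. Hence $\rho'$ and $\rho_1'\cdots\rho_k'$ agree at Boolean points, and so they agree in every generator of $\mathcal V$, hence in $\mathbf A$.

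\emph{Main obstacle.} I expect the main obstacle to be Step 1: proving that $\mathcal V$ is generated by finitely many semirings on which Boolean-point evaluation suffices. In particular one must ensure that no generator has elements that are not images of $0/1$ combinations. If that fails, the fallback is to work directly in the free algebra of $\mathcal V$. There the aim would be to show that a strictly positive multilinear polynomial is determined modulo $\mathcal V$ by the vector of its values on $\{0,1\}^n$, reduced by the identifications $n+2\approx n$ for $n\ge2$, and the $1+3x\approx 1+x$ type collapses, applied coordinatewise. I would try to prove this by induction on the number of variables, splitting $\rho'=x_n\alpha+\beta$ and using idempotence to separate the $x_n=0$ and $x_n=1$ behaviour.
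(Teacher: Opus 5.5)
\textbf{There is a genuine gap in how you define $t_\rho$.} Your overall framework is the paper's: expand $\mathbf A$ to a model of $\HSI^*$ by interpreting each $t_\rho$ as a $\{+,\cdot,1\}$-term, then invoke Wilkie's theorem. The problem is your recipe for choosing $t_\rho$, and it cannot be repaired along the lines you suggest.

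\emph{Step 1 is false as stated.} Take the two-element semiring $\{1,\omega\}$ with $+=\cdot=\max$ and $1<\omega$. It is commutative, has identity $1$, and is multiplicatively idempotent; with $x^y:=x$ it is even an idempotent HSI algebra. It fails $1+1\approx 1+1+y+y$ at $y=\omega$. That law holds in $\mathbf 2$, $\mathbb Z_2$ and $T=\{0,1,2\}$, and both sides have identical reduced values at every Boolean point. So Boolean-point values do not determine a polynomial modulo $\mathcal V$. Consequently your $t_\rho$ is not independent of the choice of $\rho'$, and your fallback induction would be trying to prove a false statement.

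\emph{More fundamentally, $t_\rho$ cannot mimic $\rho$ on the generators at all.} Take $\rho=x^2-xy+y^2$. It is an irreducible positive factor in $x^3+y^3=(x+y)(x^2-xy+y^2)$, so $t_\rho$ genuinely occurs in a law of $\HSI^*$. Since $T\in\mathcal V$, your recipe needs $\rho'\in\mathbb N[x,y]$ with $T$-values $0,1,1,1$ at $(0,0),(1,0),(0,1),(1,1)$.
\begin{itemize}
\item The first value forces $\rho'$ to have no constant term.
\item The next two force the coefficient sums of the pure-$x$ monomials and of the pure-$y$ monomials to be exactly $1$.
\item Hence $\rho'(1,1)\geq 2$, which is $2$ in $T$, not $1$.
\end{itemize}
So no strictly positive $\rho'$ satisfies your requirement.

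\textbf{The missing idea.} $t_\rho$ does not have to reproduce $\rho$ anywhere; it only has to make the factorisation laws hold. The paper sets $t_{p-q}:=p+q$. For example, $t_{x^2-xy+y^2}=x^2+xy+y^2$ takes the value $2$ at $(1,1)$ in $T$, yet $(x+y)(x^2+xy+y^2)\approx x^3+y^3$ follows from $x^2\approx x$, $x+y\approx x+2xy+y$ and $2\approx 4$.

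The verification then compares $s=\prod(p_i-q_i)$ with $s'=\prod(p_i+q_i)$ coefficient by coefficient. The coefficients agree mod $2$ and are at least as large in $s'$. For every monomial $m$, either its coefficients in $s$ and $s'$ coincide, or $\HSI\cup\{x^2\approx x\}\vdash s\approx s+2m$, using covering and $2\approx 4$ (Lemmas~\ref{lem:cover2}, \ref{lem:equivmod2} and \ref{lem:canincreasemod2}). This sign-flip definition together with the covering argument is what your proposal lacks. The covering argument also keeps track of which variable sets occur in the support, which is exactly the information your Boolean-value invariant discards.
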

The proof of Theorem~\ref{thm:x2x} will cover most of the section and consist of a number of lemmas.  

Before we commence these we observe that a number of other strong laws imply multiplicative idempotence; these are not used in the proof, but are commonly observed properties of many very small HSI algebras. 
\begin{lem}
Each one of the following laws implies $x^2\approx x$ in the presence of \HSI, so also define subvarieties of the variety of ${\bf N}$\up: $x^y\approx x$, $x+y\approx x\cdot y$, $1\approx 2$, $x+x\approx x$.
\end{lem}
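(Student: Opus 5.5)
We derive $x\cdot x\approx x$ from each of the four laws in turn, working in $\HSI$. The "so also" clause is then immediate from Theorem~\ref{thm:x2x}: any HSI algebra satisfying one of these laws is multiplicatively idempotent, so it lies in the variety generated by ${\bf N}$.

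\emph{The law $x^y\approx x$.} Substitute $y:=1+1$. This gives $x\approx x^{1+1}$. Now $x^{1+1}\approx x^1\cdot x^1\approx x\cdot x$ by $x^{y+z}\approx x^yx^z$ and $x^1\approx x$, so $x\cdot x\approx x$.

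\emph{The law $x+y\approx x\cdot y$.} Substitute $y:=1$. This gives $x+1\approx x\cdot 1\approx x$. Multiplying the equation $x+1\approx x$ by $x$ and using distributivity gives $x\cdot x+x\approx x\cdot x$. Applying the original law to the pair $x\cdot x$ and $x$ gives $x\cdot x+x\approx (x\cdot x)\cdot x$. Hence $x^3\approx x^2$, which is not yet what we want, so we look for a more direct route.

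Instead substitute $y:=x$ in the law. This gives $x\cdot x\approx x+x$. Next, $x+x\approx x\cdot(1+1)$ by distributivity, and $1+1\approx 1\cdot 1\approx 1$ by the law with $x=y=1$. So $x+x\approx x\cdot 1\approx x$, and therefore $x\cdot x\approx x+x\approx x$.

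\emph{The law $1\approx 2$, i.e.\ $1\approx 1+1$.} Raise $x$ to both sides. On one side $x^1\approx x$. On the other $x^{1+1}\approx x^1\cdot x^1\approx x\cdot x$. Hence $x\approx x\cdot x$.

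\emph{The law $x+x\approx x$.} Put $x:=1$ to get $1+1\approx 1$, which is the previous case, so $x\cdot x\approx x$ follows.

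None of these steps is a real obstacle. The only point needing care is the second law, where the first substitution ($y:=1$) stalls at $x^3\approx x^2$. The route that works is to derive $1+1\approx 1$ from the law itself and then use distributivity, exactly as in the $1\approx 2$ case. In fact all four laws reduce to the single key consequence $1+1\approx 1$ (for $x^y\approx x$ one can also argue directly, as above), and $1+1\approx 1$ gives $x\cdot x\approx x^{1+1}\approx x^1\approx x$.
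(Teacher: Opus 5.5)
Your proposal is correct and follows the paper's argument: $x^y\approx x$ (at $y=1+1$) and $1\approx 2$ give $x\cdot x\approx x$ directly, while $x+x\approx x$ and $x+y\approx x\cdot y$ (via its instance $x+x\approx x\cdot x$) reduce to $1+1\approx 1$ on setting $x=1$, and the subvariety claim then comes from Theorem~\ref{thm:x2x}. Your abandoned first attempt did not actually stall: putting $x=1$ in $x+1\approx x$ already gives $1+1\approx 1$.
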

\begin{proof}
The consequence $x^y\approx x\vdash x^2\approx x$ is immediate, as is $2\approx 1\vdash x^2\approx x$.  From either of $x+x\approx x$ or $x+x\approx x\cdot x$ we obtain $2=1+1\approx 1$.
\end{proof}

The next lemma is used in the proof.
\begin{lem}\label{lem:x2x}
The following laws are consequences of $\HSI\cup\{x^2\approx x\}$\up:
\begin{align*}
x+y\approx {}&x+2xy +y,\\
1+x\approx {}& 1+3x,\\
x+xy\approx{}& x+3xy,\\
2\approx{}& 4.
\end{align*}
\end{lem}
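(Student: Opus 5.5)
The plan is to derive every law by expanding a square with the semiring laws of $\HSI$ and then collapsing it with $x^2\approx x$. The only consequence of idempotence needed beyond that is its instance $x\cdot x\cdot y\approx x\cdot y$, which follows from $x^2\approx x$ together with associativity of $\cdot$, since $x^2\approx x\cdot x$ is derivable in $\HSI$ from $x^{1+1}\approx x^1\cdot x^1$ and $x^1\approx x$. No exponential laws are used beyond this identification of $x^2$ with $x\cdot x$, so the whole argument is routine equational reasoning in $\overline{\HSI}$ plus idempotence.

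For the first law, I expand $(x+y)\cdot(x+y)$ using distributivity, commutativity and associativity. This gives $x\cdot x+x\cdot y+y\cdot x+y\cdot y\approx x^2+2xy+y^2$, where $2xy$ abbreviates $xy+xy$. Applying $x^2\approx x$ to the left side $(x+y)^2$ and to the summands $x^2$ and $y^2$ then yields $x+y\approx x+2xy+y$.

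The other three laws are substitution instances of the first.
\begin{itemize}
\item Setting $y:=1$ and using $x\cdot 1\approx x$ gives $x+1\approx x+2x+1$, that is, $1+x\approx 1+3x$.
\item Setting $y:=xy$ gives $x+xy\approx x+2x(xy)+xy$. Reducing $x(xy)\approx x^2y\approx xy$ by associativity and idempotence turns this into $x+xy\approx x+3xy$.
\item Setting $x:=1$ in the second law gives $1+1\approx 1+3$, that is, $2\approx 4$. Alternatively, $2\approx 2^2\approx(1+1)(1+1)\approx 4$ directly.
\end{itemize}

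There is no real obstacle here. The only point requiring care is bookkeeping: integer multiples such as $2xy$ and $3x$, and numerals such as $2$ and $4$, are abbreviations for iterated sums, so the coefficient arithmetic is justified by associativity and commutativity of $+$ alone.
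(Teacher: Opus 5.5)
Your proof is correct and follows the paper's argument: expand $(x+y)^2$, collapse it with $x^2\approx x$, then specialise $y:=1$ and $x:=1$. The one difference is the third law, which the paper obtains by multiplying $1+y\approx 1+3y$ through by $x$, whereas you substitute $y:=xy$ into the first law and use $x\cdot x\cdot y\approx x\cdot y$; both derivations are valid.
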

\begin{proof}
Using $x^2\approx x$ we have $x+y\approx (x+y)^2\approx x^2+2xy+y^2\approx x+2xy+y$.  Then $1+x\approx 1+3x$ follows also when $y$ is $1$, with $x+xy\approx x+3xy$ following by multiplication of both sides by $x$.  The law $2\approx 4$ follows similarly by letting $x=1$.
\end{proof}

Let us say that a monomial $m$ is  \emph{covered} by some monomials~$m_1$, $m_2$, \dots, $m_k$ if $m$ has exactly the same explicitly appearing variables as the product $m_1m_2\dots m_k$.
\begin{lem} \label{lem:cover2}
Let $m$ be a monomial that is covered by some monomials~$m_1$, $m_2$, \dots, $m_k$ with $k>1$, then 
\[
\HSI\cup\{x^2\approx x\}\vdash  m_1+ \dots + m_k \approx  m_1+ \dots + m_k+ 2m
\]
\end{lem}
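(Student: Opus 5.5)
The idea is to reduce everything to the first law of Lemma~\ref{lem:x2x}, namely $x+y\approx x+2xy+y$, after first replacing $m$ by the product $m_1m_2\cdots m_k$.

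\emph{Step 1: replace $m$ by the product.} Under $\HSI\cup\{x^2\approx x\}$, any two monomials (with coefficient $1$) that have the same set of explicitly appearing variables are provably equal. Using associativity and commutativity of $\cdot$, each monomial can be rearranged so that repeated occurrences of a variable are adjacent. Then $x\cdot x\approx x$ collapses each repeated variable to a single occurrence, leaving the product of its distinct variables in a fixed order. Since $m$ is covered by $m_1,\dots,m_k$, it has the same variables as $m_1m_2\cdots m_k$, so $\HSI\cup\{x^2\approx x\}\vdash m\approx m_1m_2\cdots m_k$. It therefore suffices to prove
\[
m_1+\dots+m_k\approx m_1+\dots+m_k+2m_1m_2\cdots m_k
\]
for all $k\geq 2$.

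\emph{Step 2: induction on $k$.} For $k=2$, this is exactly the law $x+y\approx x+2xy+y$ of Lemma~\ref{lem:x2x}, with $x:=m_1$ and $y:=m_2$ (together with commutativity of $+$).

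For the inductive step, put $S:=m_1+\dots+m_{k-1}$ and $P:=m_1\cdots m_{k-1}$. By the induction hypothesis, $S\approx S+2P$. Then, writing $2P$ as $P+P$ and regrouping by associativity and commutativity of $+$,
\[
S+m_k\approx S+2P+m_k\approx (S+P)+(P+m_k).
\]
Applying Lemma~\ref{lem:x2x} to the bracket $P+m_k$, with $x:=P$ and $y:=m_k$, gives
\[
(S+P)+(P+m_k)\approx (S+P)+P+2Pm_k+m_k\approx (S+2P)+m_k+2Pm_k.
\]
Using the induction hypothesis again in reverse, $S+2P\approx S$, so the right-hand side becomes $S+m_k+2Pm_k$. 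Since $Pm_k=m_1\cdots m_k$, this is the required law for $k$.

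The only delicate point is the inductive step. Applying Lemma~\ref{lem:x2x} naively to $S$ and $m_k$ produces the cross term $2Sm_k$ rather than $2Pm_k$. The remedy is to use the induction hypothesis to borrow a copy of $2P$ from $S$, split it as $P+P$, and pair one copy with $m_k$; the spare copy is then absorbed back into $S$ by the induction hypothesis.
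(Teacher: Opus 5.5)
Your proof is correct. It uses the same basic tool as the paper, the law $x+y\approx x+2xy+y$ of Lemma~\ref{lem:x2x}, but the bookkeeping is different.

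The paper's proof asserts that repeated application of that law gives $m_1+\dots+m_k\approx m_1+\dots+m_k+2k\,m_1m_2\cdots m_k$. It then uses $2\approx 4$ to bring the coefficient down to $2$, and only at the end uses $x^2\approx x$ to replace $m_1\cdots m_k$ by $m$.

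You do the idempotent reduction first, then run an explicit induction in which the coefficient stays exactly $2$ at every stage, so you never need $2\approx 4$. Your borrow-and-reabsorb step, which uses the induction hypothesis in both directions, is what removes the intermediate cross terms. The paper's phrase ``repeated application'' leaves that point implicit.

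What your route buys is a fully explicit derivation. It also shows that $x_1+\dots+x_k\approx x_1+\dots+x_k+2x_1\cdots x_k$ follows from $x+y\approx x+2xy+y$ and the commutative semiring laws alone. The paper's route is shorter, and it does not care which even multiple of the product accumulates, since $2\approx 4$ gives $2u\approx 2ju$ for every $j\geq 1$.

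One trivial addition: constants count as monomials elsewhere in the section. If some $m_i$ is the constant monomial $1$, Step~1 also needs $x\cdot 1\approx x$ to discard that factor.
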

\begin{proof}
Simply apply the first law of Lemma~\ref{lem:x2x} repeatedly to obtain 
$m_1+ \dots + m_k\approx m_1+ \dots + m_k+2km_1m_2\dots m_k$, then apply $2\approx 4$ to obtain $m_1+ \dots + m_k\approx m_1+ \dots + m_k+2m_1m_2\dots m_k$, and then $x^2\approx x$ to reduce powers and obtain $m_1+ \dots + m_k\approx m_1+ \dots + m_k+2m$.
\end{proof}

Let $Y$ be a subset of the variables of a polynomial $p$.  Let $p|_{Y=0}$ denote the result of assigning all variables in $Y$ the value $0$.
\begin{defn}
Let $p$ be a polynomial in variables $\{x_1,\dots,x_n\}$ and $X\subseteq \{x_1,\dots,x_n\}$.  Then $p$ is \emph{very $X$-positive} if $p|_{Y=0}$ is a positive polynomial whenever $Y\subseteq \{x_1,\dots,x_n\}\backslash X$. 
\end{defn}
As an example, strictly positive polynomials are very $X$-positive for every subset $X$ of their variables, but $(x-1)^2+y$ is a positive polynomial (as the value of~$y$ is a lower bound), but is not very $\{x\}$-positive: when $y\mapsto 0$, the resulting polynomial $(x-1)^2$ is not positive at $x=1$.
\begin{lem}
Let $p$ be a positive polynomial factor of a strictly positive polynomial~$s$ and let $m$ be a monomial appearing in $s$ with variables from $X$.  
Then $p$ is very $X$-positive.
\end{lem}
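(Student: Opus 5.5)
The plan is to reduce to a statement about specialisations: setting variables outside $X$ to $0$ turns the factorisation of $s$ into a factorisation of a nonzero polynomial. Let $s=p\cdot r$ with $p$ positive and $s$ strictly positive; here $r=s/p$ is a polynomial over $\mathbb{Q}$ by Gauss' lemma, and over $\mathbb{Z}$ up to a constant. Fix $Y\subseteq\{x_1,\dots,x_n\}\setminus X$. Specialisation at $Y=0$ is a ring homomorphism, so
\[
s|_{Y=0}=p|_{Y=0}\cdot r|_{Y=0}.
\]
The monomial $m$ of $s$ involves only variables from $X$, so it survives the specialisation. Hence $s|_{Y=0}$ is a nonzero strictly positive polynomial in the remaining variables $Z=\{x_1,\dots,x_n\}\setminus Y$. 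In particular $s|_{Y=0}$ takes strictly positive values at every point of $(\mathbb{R}^+)^Z$.

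The second step is to show that $p$ has constant sign on the closed orthant. Since $p$ is positive on the open orthant $(\mathbb{R}^+)^n$, continuity gives $p\ge 0$ on the closed orthant, and hence $p|_{Y=0}\ge 0$ on $(\mathbb{R}^+)^Z$. It remains to exclude zeros there. Suppose $p|_{Y=0}(z)=0$ for some $z\in(\mathbb{R}^+)^Z$. Then the factorisation forces $s|_{Y=0}(z)=0$, contradicting the previous step. So $p|_{Y=0}$ is strictly positive on $(\mathbb{R}^+)^Z$, which is exactly the statement that $p$ is very $X$-positive.

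The only subtle point is the interpretation of "positive". If positivity is meant with respect to $\mathbb{R}^+$ as the strictly positive reals, the above argument is complete: the values of $p|_{Y=0}$ are nonnegative by continuity and nonzero because $s|_{Y=0}$ is nonzero there. One should also check the degenerate possibility that $p|_{Y=0}$ is identically zero. This is excluded for the same reason, since otherwise $s|_{Y=0}$ would vanish, contradicting that $m$ survives. I expect the main care to be in the continuity step, where we pass from positivity on the open orthant to nonnegativity on the boundary face $Y=0$. This is routine because polynomials are continuous.
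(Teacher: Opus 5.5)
Your proof is correct and follows essentially the same route as the paper: you set $Y=0$, note that $m$ survives so $s|_{Y=0}$ is a nonzero strictly positive polynomial, and conclude that no factor of $s$ can vanish on the positive orthant. Your continuity step, which gives $p|_{Y=0}\ge 0$ so that nonvanishing becomes positivity, makes explicit a sign point the paper's two-line proof leaves implicit; the appeal to Gauss' lemma is unnecessary, since any cofactor $r$ with $s=pr$ suffices.
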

\begin{proof}
On assignment of $0$ to variables not in $X$, the strictly positive polynomial $s$ has not reduced to $0$ due to the term $m$, so is still strictly positive.  
So no factor of $s$ can have a zero on the positive domain.
\end{proof}
Throughout, let $s$ be a positive polynomial, factorising into irreducible positive polynomials in $\mathbb{Z}[x_1,\dots,x_n]$ as $s= \prod(p_i - q_i)$ (where each $p_i,q_i$ are strictly positive and share no common monomial).  Let $s'$ be the strictly positive polynomial  $\prod(p_i + q_i)$.
This product is intended to have been expanded and simplified over $\mathbb{Z}[x_1,\dots,x_n]$ so that $s'$ is written as a sum of monomials.  We fix the notation $s'$ throughout the remaining lemmas.
\begin{lem}\label{lem:ss'}
$s$ and $s'$ are equivalent in the polynomial ring $\mathbb{Z}_2[x_1,\dots,x_n]$.
\end{lem}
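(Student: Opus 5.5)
The plan is to reduce modulo $2$ factor by factor and then use that reduction mod $2$ is a ring homomorphism. Let $\pi\colon \mathbb{Z}[x_1,\dots,x_n]\to\mathbb{Z}_2[x_1,\dots,x_n]$ be coefficientwise reduction modulo $2$. Since $\pi$ is a ring homomorphism, it respects both products and sums, so
\[
\pi(s)=\prod \pi(p_i-q_i)\qquad\text{and}\qquad \pi(s')=\prod \pi(p_i+q_i).
\]
It therefore suffices to show $\pi(p_i-q_i)=\pi(p_i+q_i)$ for each $i$.

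For a single factor, note that in $\mathbb{Z}_2$ we have $-1=1$. Hence $\pi(-q_i)=\pi(q_i)$, and so
\[
\pi(p_i-q_i)=\pi(p_i)+\pi(-q_i)=\pi(p_i)+\pi(q_i)=\pi(p_i+q_i).
\]
Equivalently, $(p_i+q_i)-(p_i-q_i)=2q_i$ has all coefficients even. Multiplying these equalities over $i$ gives $\pi(s)=\pi(s')$.

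Two small points need checking. First, $s'$ is described as the expanded and simplified form of $\prod(p_i+q_i)$, and $s$ is the polynomial represented by $\prod(p_i-q_i)$. Both are equalities in $\mathbb{Z}[x_1,\dots,x_n]$, so expansion and simplification commute with $\pi$ and cause no trouble. Second, the hypothesis that $p_i$ and $q_i$ share no monomial is not needed for this congruence; it only makes the decomposition $p_i-q_i$ canonical.

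Any constant factors in the factorisation of $s$ are treated the same way. An integer $c$ is written as $c-0$ (with $p_i=c$, $q_i=0$), so it contributes the same factor $c$ to both sides. I do not expect any real obstacle; the only care needed is in stating precisely that $s'$ is taken as an element of $\mathbb{Z}[x_1,\dots,x_n]$ before reducing.
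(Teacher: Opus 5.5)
Your proposal is correct and follows the paper's proof, which is just the one-line observation that $p_i - q_i \equiv_2 p_i + q_i$ for each factor, so the products agree modulo $2$. You have made explicit the homomorphism $\pi$ that this observation relies on; your remarks on constant factors and on expansion commuting with $\pi$ are accurate and harmless.
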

\begin{proof}
This is because $p_i - q_i\equiv_2 p_i + q_i$ for each $i$.
\end{proof}

We say that a monomial \emph{appears} in a polynomial (written as a sum of monomials with no further cancellations possible) if it has nonzero coefficient.

\begin{lem}\label{lem:equivmod2}
Every monomial appearing in $s$ with coefficient $c\geq 1$ appears in $s'$ with coefficient $c'\geq c$ and $c\equiv_2 c'$.
\end{lem}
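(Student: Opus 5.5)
The plan is to expand both products over subsets of the factor indices and compare the two expansions monomial by monomial. Write $s=\prod_{i=1}^{r}(p_i-q_i)$ and $s'=\prod_{i=1}^{r}(p_i+q_i)$. For each subset $S\subseteq\{1,\dots,r\}$ put
\[
\pi_S:=\prod_{i\in S}q_i\prod_{i\notin S}p_i .
\]
Since every $p_i$ and $q_i$ is strictly positive, each $\pi_S$ is a strictly positive polynomial. By distributivity in $\mathbb{Z}[x_1,\dots,x_n]$ we have
\[
s=\sum_{S}(-1)^{|S|}\pi_S \quad\text{and}\quad s'=\sum_S \pi_S .
\]

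Next, fix a monomial $m$ and let $a_S\ge 0$ denote the coefficient of $m$ in $\pi_S$ after full expansion. The coefficient of $m$ in $s$ is then $c=\sum_S(-1)^{|S|}a_S$, and the coefficient of $m$ in $s'$ is $c'=\sum_S a_S$. These are exactly the coefficients appearing in the simplified forms of $s$ and $s'$, because collecting like monomials is just summing these contributions.

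Now assume $c\geq 1$. Since every $a_S\ge 0$, we get
\[
c'=\sum_S a_S\;\ge\;\Bigl|\sum_S(-1)^{|S|}a_S\Bigr| = c\ge 1 .
\]
So $m$ appears in $s'$, with coefficient $c'\geq c$. For parity, note that $c'-c=2\sum_{|S|\text{ odd}}a_S$ is even, so $c\equiv_2 c'$. Alternatively, parity follows directly from Lemma~\ref{lem:ss'}, since $s\equiv s'$ in $\mathbb{Z}_2[x_1,\dots,x_n]$.

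There is no real obstacle here. The one point needing care is to be explicit that the coefficients $a_S$ are nonnegative, which holds because the $p_i,q_i$ are strictly positive. This is what turns the cancellation in $s$ into the inequality $c\le c'$. The assumption that each $p_i$ and $q_i$ share no common monomial is not needed for this particular statement.
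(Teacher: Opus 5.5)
Your proof is correct and is essentially the paper's argument made explicit. The paper gets parity from Lemma~\ref{lem:ss'} and gets $c'\ge c$ from the one-line remark that replacing the subtractions by additions can only increase coefficients; your expansion $s=\sum_S(-1)^{|S|}\pi_S$, $s'=\sum_S\pi_S$ with all $a_S\ge 0$ is exactly the rigorous form of that remark, and you are right that the no-common-monomial hypothesis is not needed here.
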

\begin{proof}
By Lemma~\ref{lem:ss'}, the coefficient $c'$ is equivalent to $c$ modulo $2$.  As the product $\prod(p_i + q_i)$ differs from $\prod(p_i - q_i)$ by replacing all subtractions by addition, the coefficients in $s'$ can only be greater or equal to those in $s$. 
\end{proof}
A monomial $m$ is \emph{strictly covered} by some monomials $m_1,\dots,m_n$ if it is covered by the monomials, but each individual monomial $m_i$ has its variables as a proper subset of $m$.
\begin{lem}
If a monomial $m$ appearing in $s'$ is strictly covered by some monomials in $s$, then $\HSI\cup\{x^2\approx x\}\vdash s\approx s+2m$.
\end{lem}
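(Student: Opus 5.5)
The plan is a direct reduction to Lemma~\ref{lem:cover2}, with the rest of $s$ carried along by additivity. Let $m$ appear in $s'$ and be strictly covered by monomials $m_1,\dots,m_k$, each appearing in $s$ with a positive coefficient. Since every $m_i$ has a \emph{proper} subset of the variables of $m$, yet together they have exactly the variables of $m$, no single $m_i$ suffices. Hence $k>1$, which is exactly the hypothesis Lemma~\ref{lem:cover2} needs.

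First I fix how to read $s\approx s+2m$, since $s$ itself may have negative coefficients. Write $s=P-N$, where $P$ and $N$ are strictly positive polynomials with no common monomial. The monomials $m_1,\dots,m_k$ then occur in $P$, and I read $s\approx s+2m$ as the $\{+,\cdot,1\}$-identity $P\approx P+2m$. This reading is compatible with any later use in which $N$ is added back, since adding $N$ to both sides recovers $s+N\approx s+N+2m$. When $s$ is strictly positive, $N=0$ and this is literally the statement.

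Next, split off the covering monomials. Taking one copy of each distinct $m_i$, write $P=(m_1+\dots+m_k)+r$, where $r$ is a strictly positive polynomial or empty. This decomposition is provable in $\overline{\HSI}$ using commutativity and associativity of $+$, together with $x+x\approx 2x$-style regrouping of coefficients, which follows from $x\cdot 1\approx x$ and distributivity. Lemma~\ref{lem:cover2} then gives
\[m_1+\dots+m_k\approx m_1+\dots+m_k+2m.\]
Adding $r$ to both sides (equational logic is compatible with $+$) and regrouping yields $P\approx P+2m$, as required.

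The only delicate point is the bookkeeping around $s$ not being a term, together with the case where some $m_i$ coincide or carry coefficients greater than $1$. Both are handled by choosing distinct covering monomials and treating all surplus coefficients and the remaining monomials as part of $r$. I expect nothing deeper to be needed: all the multiplicative-idempotence work is already contained in Lemmas~\ref{lem:x2x} and~\ref{lem:cover2}.
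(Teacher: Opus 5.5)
Your proposal is correct and follows the paper's own argument. Strict covering forces $k>1$, so Lemma~\ref{lem:cover2} applies, and adding the remaining monomials of $s$ to both sides gives $s\approx s+2m$. Your $P-N$ reinterpretation is harmless but not needed, since $s$ is strictly positive in the one place the lemma is used (the proof of Theorem~\ref{thm:x2x}).
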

\begin{proof}
This follows immediately by Lemma \ref{lem:cover2} because strict covering of $m$ by some monomials $m_1,\dots,m_k$ requires $k>1$.
\end{proof}

\begin{lem}\label{lem:canincreasemod2}
If a monomial $m$ appearing in $s'$ is not strictly covered by some monomials in $s$, then either it has the same coefficient in $s$ as in $s'$ or  $\HSI\cup\{x^2\approx x\}\vdash s\approx s+2m$.
\end{lem}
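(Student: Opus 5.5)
The plan is to localise to the variable set of $m$ and compare the restrictions of $s$ and $s'$ there. As in the intended application (where $s$ is a representing polynomial $\rho_f$), $s$ is taken to be strictly positive, so that $s$ and $s+2m$ are genuine $\{+,\cdot,1\}$-terms. Let $X$ be the set of variables explicitly appearing in $m$, and let $Y$ be the complementary set of variables. I will compare $s|_{Y=0}=\prod(p_i-q_i)|_{Y=0}$ with $s'|_{Y=0}=\prod(p_i+q_i)|_{Y=0}$.

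The basic observation is this. Since $p_i$ and $q_i$ share no monomial, no cancellation occurs in $(p_i-q_i)|_{Y=0}$. Hence the monomials of $(p_i+q_i)|_{Y=0}$ are exactly those of $(p_i-q_i)|_{Y=0}$, with absolute values of the coefficients. Let $S$ be the set of monomials of $s$ whose variables lie in $X$; these are exactly the monomials surviving in $s|_{Y=0}$. Let $Z\subseteq X$ be the union of the variables of the members of $S$ having variable set properly contained in $X$. The hypothesis that $m$ is not strictly covered says $Z\neq X$.

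The proof splits into three cases.

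\emph{Case 1: no monomial of $S$ has variable set exactly $X$.} I will show this case cannot occur. Here $s|_{Y=0}$ involves only the variables in $Z$. If $s|_{Y=0}\neq 0$, then since $\mathbb{Z}[x_1,\dots,x_n]$ is an integral domain and degrees in each variable add under multiplication, every factor $(p_i-q_i)|_{Y=0}$ involves only variables from $Z$. By the no-cancellation observation, the same holds for each $(p_i+q_i)|_{Y=0}$, and hence for $s'|_{Y=0}$. If instead $s|_{Y=0}=0$, then some factor satisfies $(p_i-q_i)|_{Y=0}=0$, which forces $p_i|_{Y=0}=q_i|_{Y=0}=0$, and so $s'|_{Y=0}=0$. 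In either case $m$, whose variable set $X$ strictly contains $Z$, cannot appear in $s'|_{Y=0}$. This contradicts the assumption that $m$ appears in $s'$.

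\emph{Case 2: $S$ contains a monomial $m_0$ with variable set $X$ together with at least one further monomial $m_1$.} By $x^2\approx x$, any monomial with variable set $X$ is provably equal to $m$. Apply Lemma~\ref{lem:cover2} with $k=2$ to the pair $m_0,m_1$: the product $m_0m_1$ has variable set $X$, so $m$ is covered by them. This gives $m_0+m_1\approx m_0+m_1+2m$, and adding the remaining summands of $s$ yields $\HSI\cup\{x^2\approx x\}\vdash s\approx s+2m$. This includes the case where $m_1$ is the constant $1$, where one may equally use $1+x\approx 1+3x$ from Lemma~\ref{lem:x2x}.

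\emph{Case 3: $S=\{m_0\}$ with $m_0$ having variable set $X$.} Then $s|_{Y=0}=c\,m_0$ for some $c\geq 1$. By unique factorisation, each factor $(p_i-q_i)|_{Y=0}$ is $\pm$ a constant times a monomial. By the no-cancellation observation, $(p_i+q_i)|_{Y=0}$ is the same monomial with the absolute value of that coefficient. Therefore $s'|_{Y=0}=c\,m_0$ as well. Since $m$ appears in $s'$ and all its variables lie in $X$, $m$ survives in $s'|_{Y=0}$, so $m=m_0$ and $m$ has coefficient $c$ in both $s$ and $s'$. This gives the first alternative of the lemma.

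The main obstacle is the bookkeeping in Cases 1 and 3. One must correctly justify that restricting to $Y=0$ commutes with the passage $p_i-q_i\mapsto p_i+q_i$; this is exactly where the standing assumption that $p_i$ and $q_i$ share no common monomial is essential. One must also handle the degenerate situation of a factor restricting to $0$. I would write out the integral-domain degree argument carefully and treat that zero case separately.
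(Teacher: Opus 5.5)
Your proof is correct and follows essentially the same route as the paper's. You restrict to $Y=0$, use the factorisation to see that $m$ must be covered and hence, not being strictly covered, that $s$ has a monomial with variable set exactly $X$; you derive $s\approx s+2m$ whenever a second monomial of $s$ has its variables within $X$, and otherwise show each factor restricts to a single term so the coefficients in $s$ and $s'$ agree. Your variations are all sound: you merge the paper's subcases into one application of Lemma~\ref{lem:cover2}, you use absolute values of coefficients instead of the paper's positivity/limit argument (which also lets you handle coefficients $c\geq 2$ directly), and you give an explicit degree argument, including the case $s|_{Y=0}=0$, for the covering step that the paper only sketches.
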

\begin{proof}
Let $X=\{x_1,\dots,x_u\}$ denote the variables in $m$ and $Y=\{y_1,\dots,y_v\}$ the variables in $s$ that are not in $m$.
First we note that $m$ must be at least covered by monomials in $s$.  
To show this, let $Y$ denote the variables not appearing in $m$, and consider the law $s|_{Y=0}\approx s'|_{Y=0}$.  
Now $m$ appears in $s'|_{Y=0}$ and the same variables appear in  $s|_{Y=0}$. 
It follows that all of the variables in $m$ appear in monomials of $s$ whose content is a subset of that in $m$.
Equivalently, $m$ is covered by the monomials in $s|_{Y=0}$ and hence in $s$.

As $m$ is covered but not strictly covered, it follows that there is at least one monomial appearing in $s$ with the same variables as $m$.  
If a monomial with the same variables as $m$ (including possibly $m$ itself) appears in~$s$ with coefficient at least $2$, then the result follows using $2\approx 4$ and Lemma \ref{lem:equivmod2}.  
Similarly if there is more than one monomial $m_1,m_2$ in $s$ with the same variables as $m$, then $\HSI\cup\{x^2\approx x\}\vdash m_1\approx m_2\approx m$ and then $2\approx 4$ can again be used to obtain $s\approx s+2m$.   
Now assume the remaining case: there is a unique monomial $m_s$ in $s$ with the same variables as $m$, and that the coefficient of $m_s$ in $s$ is $1$.   
If there is a monomial $m'$ in $s$ whose variables are a proper subset of those in $m$ and $m_s$, then the law $x+xy\approx x+3xy$ from Lemma~\ref{lem:x2x} gives $m'+m_s\approx m'+3m_s$ and we are in a case already considered.  Note that it is allowed for $m'$ to be a constant (with set of variables $\varnothing$).  
So we can now assume that $m_s$ is the only monomial in $s$ whose variables are a subset of $X$ and the assumptions on $m_s$ in $s$ then imply that $s|_{Y=0}$ is simply $m_s$ by itself.  Our goal is to show that the coefficient of $m$ in $s'$ is also $1$.    
The factorisation of $s$ as $\prod_{1\leq i\leq k}(p_i-q_i)$ continues to hold after assigning variables in $Y$ to $0$, so each $p_i-q_i$ has $(p_i-q_i)|_{Y=0}$ equal to a factor of $m_s$: either a constant, or a monomial whose variables are within $X$.  
We denote the factor $(p_i-q_i)|_{Y=0}$ as $m_i$, and observe that the reduction of $p_i-q_i$ to $(p_i-q_i)|_{Y=0}$ is simply by replacing monomials with a variable in $Y$ by $0$, there are no proper subtractions that arise between nonzero coefficient monomials $m'-m'$, as these did not originally occur in $p_i-q_i$.
This in turn implies that the coefficient of $m_i$ cannot be negative in $(p_i-q_i)|_{Y=0}$, for otherwise in $p_i-q_i$, all monomials in $p_i$ involved at least one variable in $Y$: taking these values very close to $0$ with values for $X$ variables fixed then makes $p_i-q_i$ negative (yet it is a positive polynomial).
It follows that $m_i$ is the only monomial in $p_i$ whose variables are within $X$ (including the possibility that $m_i$ is a constant), and that all monomials in $q_i$ involve a variable from $Y$. 
Thus the unique monomial in either of $s=\prod_{1\leq i\leq k}(p_i-q_i)$ (which we denoted $m_s$) or $s'=\prod_{1\leq i\leq k}(p_i+q_i)$ (which must be $m$) with variables amongst $X$ is the monomial $m_1m_2\dots m_k$, and the coefficient is fixed identically in both cases.
\end{proof}

\begin{proof}[Proof of Theorem~\ref{thm:x2x}]
Let ${\bf M}$ be a model of $\HSI\cup\{x^2\approx x\}$.  We show that ${\bf M}$ can be expanded to a model of Wilkie's $\HSI^*$ by defining the operation $t_{p-q}$ for every positive polynomial $p-q$ (where $p$ and $q$ are strictly positive with no common monomials).  
Define $t_{p-q}(x_1,\dots,x_n)$ to be $p+q$.   As before, when $q$ is empty, the law $t_{p-q}\approx p$ holds directly, as $p+q$ is just $p$.  
When~$s$ is strictly positive, and factorises as $\prod_{1\leq i\leq k}(p_i-q_i)$, then we need to verify satisfaction of $t_s\approx \prod_{1\leq i\leq k}t_{p_i-q_i}$.  But this is exactly what is shown in Lemmas~\ref{lem:equivmod2} and Lemma~\ref{lem:canincreasemod2}, where $s'$ denotes $\prod_{1\leq i\leq k}t_{p_i-q_i}$ written as $\prod_{1\leq i\leq k}(p_i+q_i)$.  
Indeed, by Lemma~\ref{lem:equivmod2} the monomials appearing in $s'$ have the same coefficient as those appearing in $s$ modulo $2$, with those in $s'$ having greater or equal value.  There are no negative coefficients of monomials in $s$. If the coefficient of a monomial in $s$ is $0$, then Lemma~\ref{lem:cover2} applies.  If the coefficient is~$1$, then Lemma~\ref{lem:canincreasemod2} applies.  If the coefficient is $2$ or more, then $2\approx 4$ applies.
So $t_s\approx \prod_{1\leq i\leq k}t_{p_i-q_i}$ is a consequence of $\overline{\HSI}\cup\{x^2\approx x\}$ and the definitions of $t_s$ and~$t_{p_i-q_i}$.
Then by Wilkie's Theorem~\ref{thm:WilkiesTheorem}, all laws in $\Th_{\rm eq}(\langle\mathbb{N},+,\cdot,\uparrow,1\rangle)$ are consequences of $\HSI\cup\{x^2\approx x\}$.
\end{proof}

\begin{pro}\label{pro:3element}
All HSI algebras on at most $3$-elements lie within the variety of $\langle\mathbb{N},+,\cdot,\uparrow,1\rangle$.
\end{pro}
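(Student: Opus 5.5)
The plan is to reduce the statement to Theorem~\ref{thm:x2x} wherever possible and treat the leftover algebras individually. The trivial algebra and the 2-element HSI algebras are quotients of ${\bf N}$ or are known to lie in the variety of ${\bf N}$ (Asatryan and the second author). Even so, I would re-derive the 2-element case by checking $x^2\approx x$ on each of the five 2-element models. In a 2-element HSI algebra, $1$ is a multiplicative identity, so $x\cdot x=x$ holds automatically at $x=1$. At the other element $a$, we have $a\cdot a\in\{1,a\}$, and $a\cdot a=1$ makes multiplication a group on two elements. So the first step is to use the Burris--Lee enumeration of the 44 three-element HSI algebras (and the five 2-element ones) and mechanically check which satisfy $x\cdot x\approx x$. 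Every algebra that passes lies in the variety of ${\bf N}$ by Theorem~\ref{thm:x2x}.

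According to the introduction, this covers all but one of the 42 nontrivial 3-element algebras. The two quotients of ${\bf N}$ among the 44 need no argument, since they are homomorphic images of ${\bf N}$. For algebras failing idempotence, I would first ask whether they are quotients of ${\bf N}$, for instance ${\bf N}$ modulo a congruence identifying all numbers $\ge 2$, or all $\ge 3$, or numbers by parity. Typical candidates are $\{1,2,3\}$ with values capped at $3$, or $\{1,a,b\}$ where multiplication is $\mathbb{Z}_2$-like on part of the algebra. A quotient of ${\bf N}$ is automatically in the variety. A subalgebra of a product of quotients of ${\bf N}$, or of $\langle\mathbb{N}_0,+,\cdot,\uparrow_i,1\rangle$ from Remark~\ref{eg:N0}, would also do.

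The main obstacle is the single exceptional algebra, which is neither idempotent nor (presumably) an obvious quotient of ${\bf N}$. For it I would try to reproduce the proof of Theorem~\ref{thm:x2x} with a weaker law in place of $x^2\approx x$, for example $x^3\approx x$ or $2x\approx 2$, whichever it satisfies. Concretely, I would interpret each $t_{p-q}$ in the algebra by some strictly positive term and verify the factorisation laws $t_s\approx\prod t_{p_i-q_i}$ of $\HSI^*$, so that Wilkie's Theorem~\ref{thm:WilkiesTheorem} applies. Since the algebra has only three elements, the natural interpretation is evaluation of $p+q$ or of $p+2q$ modulo the algebra's additive structure, checked by a case analysis as in Lemmas~\ref{lem:equivmod2} and~\ref{lem:canincreasemod2}. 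If that fails, the fallback is to show the algebra is a homomorphic image of a subalgebra of ${\bf N}\times{\bf A}$, where ${\bf A}$ is an idempotent algebra already handled. One would take the pairs $(n,a)$ with $a$ determined by $n$ modulo a suitable congruence, and check directly that the operations are respected.
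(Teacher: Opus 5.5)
\textbf{There is a genuine gap: the exceptional algebra is never actually handled.} Your reduction of the idempotent cases to Theorem~\ref{thm:x2x}, and of the integer models to quotients of ${\bf N}$, matches the paper. But the one algebra that carries the real content of the proposition is left open. You do not identify it, and you only list strategies to try. It is Burris--Lee's number 35, $M_{35}$ on $\{1,2,b\}$, where:
\begin{itemize}
\item every sum equals $2$, and $2$ is multiplicatively absorbing;
\item $b\cdot b=2$;
\item $1^y=1$, $2^y=2$, $b^1=b$ and $b^2=b^b=2$.
\end{itemize}
Two of your suggestions fail for it. First, $x^3\approx x$ is false, since $b^3=2$. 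Second, your fallback cannot work at all. A subalgebra of ${\bf N}\times{\bf A}$ made of pairs $(n,a)$ with $a$ determined by $n$ must contain $(1,1)$ and be closed under $+$. So it is the graph of the homomorphism ${\bf N}\to{\bf A}$ and is isomorphic to ${\bf N}$. Its homomorphic images are therefore quotients of ${\bf N}$, hence generated by $1$. In $M_{35}$, however, the subalgebra generated by $1$ is only $\{1,2\}$, so a genuine free variable is unavoidable. Separately, your 2-element sketch stops short: you still need to rule out $a\cdot a=1$. This follows because $a(1+a)=a+1$ makes $1+a$ a fixed point of multiplication by $a$, whereas that map would swap $1$ and $a$.

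\textbf{How the paper closes it.} The paper shows $M_{35}$ is a quotient of the free algebra ${\bf N}[x]$, using the blocks $\{1\}$, $\{x\}$ and ${\bf N}[x]\setminus\{1,x\}$. This partition is a congruence because no proper sum lands in $\{1,x\}$. Likewise no product other than $1\cdot t$ does, and no power other than $1^t$ and $t^1$ does. The quotient is $M_{35}$ under $x\mapsto b$.

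\textbf{Your first option would also work, but you did not isolate why.} Rerunning the proof of Theorem~\ref{thm:x2x} with $t_{p-q}:=p+q$ does succeed. The key facts are that $M_{35}\models x+y\approx 1+1$ and that $2$ is multiplicatively absorbing. Hence a strictly positive polynomial evaluates to its monomial if it is a single monomial with coefficient $1$, and to $2$ otherwise. Now take $s=\prod(p_i-q_i)$ and $s'=\prod(p_i+q_i)$.
\begin{itemize}
\item $s'$ is a single coefficient-$1$ monomial exactly when $s'(1,\dots,1)=\prod\bigl(p_i(1,\dots,1)+q_i(1,\dots,1)\bigr)=1$. That forces every $q_i=0$ and every $p_i$ to be such a monomial, so then $s=s'$.
\item Conversely, the positive irreducible factors of a monomial are variables, so if $s$ is such a monomial then again $s=s'$.
\item Otherwise $s$ evaluates to $2$, and some factor $p_i+q_i$ evaluates to $2$, so $s'$ evaluates to $2$ as well.
\end{itemize}
This verification of the $\HSI^*$ factorisation laws for the specific algebra is exactly what your proposal is missing.
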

\begin{proof}
This is trivial for the integer models, as they are quotients of~$\langle\mathbb{N},+,\cdot,\uparrow,1\rangle$, while 41 of the 42 non-integer algebras in~\cite[Appendix~A]{burlee92} satisfy $x^2\approx x$ so are covered by Theorem~\ref{thm:x2x}.  The single model that fails $x^2\approx x$ is number 35, and we denote it by $M_{35}$:
\[
\begin{tabular}{c|ccc}
$+$&$1$&$2$&$b$\\
\hline
$1$&$2$&$2$&$2$\\
$2$&$2$&$2$&$2$\\
$b$&$2$&$2$&$2$\\
\end{tabular}\qquad 
\begin{tabular}{c|ccc}
$\cdot$&$1$&$2$&$b$\\
\hline
$1$&$1$&$2$&$b$\\
$2$&$2$&$2$&$2$\\
$b$&$b$&$2$&$2$\\
\end{tabular}\qquad 
\begin{tabular}{c|ccc}
$\uparrow$&$1$&$2$&$b$\\
\hline
$1$&$1$&$1$&$1$\\
$2$&$2$&$2$&$2$\\
$b$&$b$&$2$&$2$\\
\end{tabular} 
\]
We employ the approach used in \cite[Proposition~3.3]{alsjac2}.
Recall that the $1$-generated free algebra in the variety of ${\bf N}=\langle\mathbb{N},+,\cdot,\uparrow,1\rangle$, is the algebra of one-variable $\mathscr{L}$-functions over $\mathbb{N}$; we will denote it by $\mathbf{N}[x]$.  
We show that $M_{35}$ is a quotient of $\mathbf{N}[x]$.
 We define a 3-block partition of $\mathbf{N}[x]$, show that it is a congruence, and that the quotient is isomorphic to $M_{35}$. 
 The blocks of the partition are $B_1:=\{1\}$, $B_b:=\{ x\}$, $B_2:=\mathbf{N}[x]\backslash\{1,x\}$, where for $a\in \{1,2,b\}$, the  block $B_a$ corresponds to $a$, in the sense that the partition is a congruence, and the quotient structure on $\{B_1,B_2,B_b\}$ is isomorphic to $M_{35}$ under $B_a\mapsto a$.  
 To verify the congruence property for $+$, observe that in $\mathbf{N}[x]$, every proper sum is in $\mathbf{N}[x]\backslash\{1,x\}$, that is $B_2$.  This is precisely the addition table for $M_{35}$: all sums equal~$2$. 
For the congruence property for $\cdot$, observe that in $\mathbf{N}[x]$, all products except for those of the form $1\cdot t$ and $t\cdot 1$ (for $t\in \mathbf{N}[x]$) lie in $B_2$ again, with $1\cdot 1=1$, and $1\cdot x=x=x\cdot 1$.  
This is precisely the multiplication table for $M_{35}$.  
Finally for exponentiation, again all proper exponentiations lie in $B_2$, except for $1^t=1$ and $t^1$ (for any $t\in \mathbf{N}[x]$).  
For exponentiation of the form $t^1$,  only $1^1$ and $x^1$ gives values outside of $B_2$, giving $B_1$ and $B_b$ respectively.  
This is the exponentiation table for~$M_{35}$.  
So~$M_{35}$ is isomorphic to the quotient of  $\mathbf{N}[x]$ by the congruence with blocks $B_1,B_2,B_b$.
This also concludes the proof that $3$-element HSI algebras lie in the variety of $\langle\mathbb{N},+,\cdot,\uparrow,1\rangle$.
\end{proof}

Theorem~\ref{thm:x2x} also covers the significant majority of the 4 and 5-element models as well.  Mace4 quickly finds all models and multiplicatively idempotent models on 4 and 5 elements (up to isomorphism): 597 of 657 four-element models and 11,158 of 13,577 five-element models are found to be multiplicatively idempotent.

As observed by Burris and Lee~\cite[Proposition~5.3]{burlee92}, the variety of Brouwerian lattices is term equivalent to the subvariety of that defined by $\HSI$: the term equivalence is simply that $\leftarrow$ plays the role of exponentiation.  As these are multiplicatively idempotent, they fall within Theorem~\ref{thm:x2x}. 
\begin{cor}\label{cor:Brouwerian}
The variety of ${\bf N}$ contains the variety of Brouwerian lattices in signature $\{\vee,\wedge,\leftarrow,1\}$, and hence has continuum many subvarieties.
\end{cor}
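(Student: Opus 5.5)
The plan is to reduce everything to Theorem~\ref{thm:x2x} via the Burris--Lee term equivalence. Given a Brouwerian lattice $\langle L;\vee,\wedge,\leftarrow,1\rangle$, interpret $+$ as $\vee$, $\cdot$ as $\wedge$, $x^y$ as $y\leftarrow x$ (the relative pseudocomplement, oriented so that the index laws hold), and $1$ as the top element. First check that this interpreted algebra satisfies $\HSI$. This is the content of \cite[Proposition~5.3]{burlee92}, and I would simply cite it, possibly spot-checking the three exponential laws:
\begin{itemize}
\item $x^{y+z}\approx x^yx^z$ becomes $(y\vee z)\to x=(y\to x)\wedge(z\to x)$;
\item $(xy)^z\approx x^zy^z$ becomes $z\to(x\wedge y)=(z\to x)\wedge(z\to y)$;
\item $(x^y)^z\approx x^{yz}$ becomes $z\to(y\to x)=(y\wedge z)\to x$.
\end{itemize}
The constant laws $1^x\approx 1$ and $x^1\approx x$ are $x\to 1=1$ and $1\to x=x$, and the semiring laws are the distributive lattice laws. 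Since $\wedge$ is idempotent, $x^2\approx x$ holds, so by Theorem~\ref{thm:x2x} every such algebra lies in the variety of ${\bf N}$.

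For the term-equivalence phrasing, note that the translation between the two signatures is a bijection on term operations. Hence the class of Brouwerian lattices, translated, is exactly the subvariety of $\HSI\cup\{x^2\approx x\}$ cut out by the (translated) Brouwerian lattice axioms. By Theorem~\ref{thm:x2x} this subvariety sits inside $\mathbb{V}({\bf N})$. Consequently, its subvarieties are also subvarieties of $\mathbb{V}({\bf N})$, and the lattice of subvarieties embeds.

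For the continuum claim, I would invoke the known fact that the lattice of varieties of Brouwerian lattices (equivalently, of Heyting algebras, or of superintuitionistic logics via Jankov's construction) has cardinality $2^{\aleph_0}$. Distinct Brouwerian subvarieties translate to distinct subvarieties of $\mathbb{V}({\bf N})$, which gives at least continuum many. Conversely, the signature is countable, so there are at most $2^{\aleph_0}$ equational theories.

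The only genuine obstacle is making sure the exponent orientation and the role of $1$ as top match the $\HSI$ axioms exactly, for instance that $x^1\approx x$ really is $1\to x=x$ and not the reversed form. The rest is citation. For the continuum result, I would cite Jankov/Kuznetsov for Heyting algebras and note that Brouwerian lattices, being the $0$-free reducts, still have continuum many subvarieties, for example by the standard argument with Jankov formulas not involving $\bot$.
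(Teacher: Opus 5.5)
Your proposal is correct and is essentially the paper's argument: the Burris--Lee term equivalence, idempotence of $\wedge$, Theorem~\ref{thm:x2x}, and then a citation for the continuum count. Your spot-checks correctly use $x^y:=y\to x$, i.e.\ $x\leftarrow y$, so the phrase ``$x^y$ as $y\leftarrow x$'' is only a slip. For the last step the paper cites Wro\'nski's theorem on Brouwerian lattices directly, and that is the citation you want: continuum results for Heyting algebras do not transfer automatically to the $0$-free reducts (Jankov's logic KC has the same $\bot$-free fragment as intuitionistic logic), so drop ``equivalently, of Heyting algebras'' and rely on Wro\'nski or on a genuinely $\bot$-free Jankov antichain argument.
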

\begin{proof}
Only the cardinality of the subvariety lattice is not immediate, however Wronski~\cite{wro} has shown that there are continuum many distinct varieties of Brouwerian lattices, and hence also of subvarieties of the variety of ${\bf N}$.
\end{proof}

A further natural model is the graph homomorphism classes.
Recall that for graphs $G=(V_1,E_1)$ and $H=(V_2,E_2)$, a homomorphism $\phi:G\to H$ is a map from the vertices $V_1$ to the vertices $V_2$ that preserves the edge relation: $\{u,v\}\in E_1$ implies $\{\phi(u),\phi(v)\}\in E_2$.
The graph homomorphism lattice is obtained by taking the classes of homomorphism equivalent graphs as elements, letting $\vee$ be the disjoint union construction, and $\wedge$ the graph direct product (the category product in the category of graphs with homomorphisms).  The single vertex looped graph acts as an identity element with respect to $\wedge$.
This lattice becomes a Brouwerian lattice, using graph exponentiation $G^H$, whose vertices are the functions from $V_2$ to $V_1$, with $(f,g)$ an edge if $\{u,v\}\in E_2$ implies $\{f(u),g(v)\}\in E_1$; see Hell and Ne\v{s}etr\v{\i}l~\cite[\S2.5]{helnes} for example.
As a Brouwerian lattice, Corollary~\ref{cor:Brouwerian} shows this well-studied algebraic object lies in the variety of ${\bf N}$.    
It seems quite remarkable that such a complicated structure can be obtained, up to isomorphism, as a quotient of a subalgebra of a power of ${\bf N}$.

\section{The large exponential algebra}\label{sec:LEA}
The $\{+,\cdot,1\}$-terms are precisely the polynomials.  
By a \emph{strict exponential term} we mean a term of the form $u^v$ where $u$ is not $1$ and $v$ is not constant.
An \emph{exponential term} will mean a product of terms, at least one of which is a strict exponential.
Every term may be reduced modulo $\HSI$ to a sum of a polynomial with a sum of exponential terms, simply by applying the distributivity laws left to right, then gathering polynomial summands together.  
For such a term $t$, we let $t_{\rm poly}$ denote the polynomial summand and $t_{\rm exp}$ the sum of exponential terms, so that $t\approx t_{\rm poly}+t_{\rm exp}$ is a valid identity of ${\bf N}$.
We mention that terms that define identical functions on~$\mathbb{N}$ may reduce in different ways, but the following lemma shows that any difference can only occur within the exponential summands, not the polynomial part.
\begin{lem}\label{lem:polysum}
If $s$ and $t$ are terms such that ${\bf N}\models s\approx t$, then ${\bf N}\models s_{\rm poly}\approx t_{\rm poly}$ (or both polynomials are empty) and ${\bf N}\models s_{\rm exp}\approx t_{\rm exp}$ (or both sums of exponentials are empty).
\end{lem}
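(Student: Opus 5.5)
The plan is to use Wilkie's normal form (Lemma~\ref{lem:WilkieNormalForm} and Theorem~\ref{thm:samerep}) together with the algebraic independence of $y_1,\dots,y_n,\tau_1,\tau_2,\dots$ from \cite[Theorem 4.4]{wil00}. Reduce both $s$ and $t$ modulo $\HSI$ to $s_{\rm poly}+s_{\rm exp}$ and $t_{\rm poly}+t_{\rm exp}$. Since $s\approx s_{\rm poly}+s_{\rm exp}$ is provable, its representing polynomial is $\rho_s=\rho_{s_{\rm poly}}+\rho_{s_{\rm exp}}$, by Case~1 of Lemma~\ref{lem:WilkieNormalForm}; likewise for $t$. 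By Theorem~\ref{thm:samerep}, $\rho_s=\rho_t$ as polynomials in the $y$'s and $x$'s, for a common enumeration of pairs chosen via Lemma~\ref{lem:listextend} to cover both terms.

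The key claim is that $\rho_{u_{\rm poly}}$ involves only $y$-variables. This is immediate, since it is a $\{+,\cdot,1\}$-term and Case~2 is never invoked. Conversely, every monomial of $\rho_{u_{\rm exp}}$ contains at least one $x$-variable. To see this, note that each summand of $u_{\rm exp}$ is a product containing a strict exponential $f^g$ with $f\not\approx 1$ and $g$ nonconstant. In Case~2, $\rho_g=c_0+\sum c_i\mu_i$ has $\ell\geq 1$, because $g$ is nonconstant. However, the $c_0$ part contributes the factor $\rho_f^{c_0}$, which need not contain $x$-variables. This is the main obstacle. The strict-exponential factor represents the function $f^{c_0}\cdot f^{g-c_0}$, and only the second piece is guaranteed to carry $\tau$'s, so naive bookkeeping fails.

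To repair this, I would argue at the level of functions rather than term shape. Group the monomials of $\rho_s$ into those free of $x$-variables, $A_s(y)$, and the rest, $B_s(y,\tau)$. Then $\rho_s=\rho_t$ gives $A_s=A_t$ and $B_s=B_t$. It remains to show $A_s=\rho_{s_{\rm poly}}$, i.e.\ that no exponential summand contributes an $x$-free monomial. For a summand $f^g\cdot h$, every monomial of $\rho_{f^g}$ carries a factor $\tau$ coming from some $\rho_j^{\mu_i}$ with $\mu_i$ nonconstant, multiplied by the constant-exponent part. Each monomial of the full product $\prod_j\rho_j^{c_0}\cdot\prod_{i\ge1}\prod_j \tau_{(j,i)}^{c_i}$ thus contains a $\tau$, provided $\rho_f$ has at least one nonconstant irreducible factor or a $y$-variable factor (Subcase~2.2). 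If $\rho_f$ is a constant $b>1$, its prime factors are constants $p\in\Irr_0$, which are legitimate $p_\ell$ under (W1.1), so $p^{\mu_i}$ is still a $\tau$. So every monomial in $\rho_{f^g}$, and hence in $\rho_{f^g h}$, contains an $x$-variable; the key point is that the product with $\ell\ge1$ is a product of sums, each term of which carries a $\tau$.

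Finally, algebraic independence transfers $\rho_{s_{\rm poly}}=\rho_{t_{\rm poly}}$ and $\rho_{s_{\rm exp}}=\rho_{t_{\rm exp}}$ back to validity in ${\bf N}$ of $s_{\rm poly}\approx t_{\rm poly}$ and $s_{\rm exp}\approx t_{\rm exp}$, via Theorem~\ref{thm:samerep}. Emptiness is handled as a zero polynomial part: if $s_{\rm poly}$ is empty, then $A_s=0$, so $A_t=0$, forcing $t_{\rm poly}$ to be empty, since a nonempty strictly positive polynomial is nonzero.
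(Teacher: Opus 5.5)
Your proposal is correct and follows the paper's route. You use Wilkie's normal form: the polynomial part contributes only $x$-free monomials, every exponential summand contributes monomials that each contain an $x$-variable, and Theorem~\ref{thm:samerep} separates the two. You add detail the paper glosses over (the $\rho_f^{c_0}$ factor is harmless because it multiplies a nonempty $x$-monomial, and constant bases factor into primes in $\Irr$), and you get ${\bf N}\models s_{\rm exp}\approx t_{\rm exp}$ from equal representing polynomials where the paper uses cancellativity, but these are minor variations.
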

\begin{proof}
This follows from Wilkie's normal form argument: in the reduction to polynomial in the expanded signature, the representation of $s_{\rm poly}$ is essentially left unchanged (so depends on no $x$-variables, only $y$-variables), while the reduction of $s_{\rm exp}$ depends always on at least some $x$-variables (in which $\tau$-exponentials are placed).  
The same is true of $t_{\rm poly}$ and $t_{\rm exp}$ and then ${\bf N}\models s\approx t$ and then Theorem~\ref{thm:samerep} shows that $s_{\rm poly}$ and $t_{\rm poly}$ are identical polynomials.  Cancellativity then implies that ${\bf N}\models s_{\rm exp}\approx t_{\rm exp}$ also.
\end{proof}
In this paper we only use Lemma~\ref{lem:polysum} when $s$ and $t$ are constant-free, and in this scenario there is a much simpler argument than Wilkie's~\cite{wil00} for Theorem~\ref{thm:samerep}, using only familiar properties of polynomials and prime numbers.  We give it, in case the methodology has other applications in small HSI algebras, and to make the work more self contained.  
The proof will make use of the fact that a nonzero polynomial $p(\vec{x})$ of total degree $d$ and in $k$ variables $\vec{x}=x_1,\dots,x_k$ cannot vanish on all of any subset $S^k$ of $\mathbb{Z}^k$ when $|S|>d$.  
This fact is a direct consequence of either the DeMillo-Lipton-Schwartz-Zippel (DMLSZ) Lemma or the Combinatorial Nullstellensatz, both of which have accessible proofs.
\begin{proof}[Alternative proof of Lemma~\ref{lem:polysum} in the constant-free case]
Let $k$ denote the number of variables.
The result is trivial if $s_{\rm exp}$ (or $t_{\rm exp}$) is empty, as a polynomial cannot be equal to an exponential, and if both are polynomial then there is nothing to prove.  
So we assume that $s_{\rm exp}$ and $t_{\rm exp}$ are not empty.  
If $s_{\rm poly}$ and $t_{\rm poly}$ are empty we are done, so assume that $s_{\rm poly}$ is not empty, and has been expanded to a sum of monomials.  
 Let $d_s$ denote the total degree of  $s_{\rm poly}$, and $n_s$ denote the sum of the coefficients.  
We may similarly choose $d_t$ and $n_t$ for the corresponding values in $t_{\rm poly}$, if it is nonempty (otherwise we take $t_{\rm poly}(x)$ to be $0$); let $d:=\max\{d_s,d_t\}$ and $n=\max\{n_s,n_t\}$.
Now select any prime $p$ greater than $\max\{n(d+1)^d,d\}$ and consider the set $S:=\{ip\mid i=1,\dots,d+1\}$.  
We show that $s_{\rm poly}$ and $t_{\rm poly}$ agree on $S^k$, which shows that they coincide (by either the DMLSZ Lemma or the Combinatorial Nullstellensatz).
For any $(m_1,\dots,m_k) \in S^k$, let $\vec{p}$ abbreviate $m_1p,\dots,m_kp$.
Observe that the choice of $p$ ensures that $s_{\rm poly}(\vec{p})$ and $t_{\rm poly}(\vec{p})$ (if it exists) are at most $n(d+1)p^d<p^p$.  
However $s_{\rm exp}(\vec{p})$  and $t_{\rm exp}(\vec{p})$ are multiples of $p^p$.  
It follows that $s(\vec{p})$ is not a multiple of $p^p$, and as $s(\vec{p})=t(\vec{p})$, it then follows that $t_{\rm poly}(\vec{p})$ is equal to $s_{\rm poly}(\vec{p})$, as required.
Hence $s_{\rm poly}$ is identically equal to $t_{\rm poly}$ everywhere.
Thus ${\bf N}^-\models s_{\rm poly}+s_{\rm exp}\approx t_{\rm poly}+t_{\rm exp}$ implies that ${\bf N}^-\models s_{\rm exp}\approx t_{\rm exp}$ as well.
\end{proof}
For two elements $s=s_{\rm poly}+s_{\rm exp}$ and $t=t_{\rm poly}+t_{\rm exp}$ we have (using only distributivity of $\cdot$ over $+$):
\begin{itemize}
	\item $(s+t)_{\rm poly} = s_{\rm poly}+t_{\rm poly}$ and $(s+t)_{\rm exp}=s_{\rm exp}+t_{\rm exp}$
	\item $(s\cdot t)_{\rm poly} = s_{\rm poly}\cdot t_{\rm poly}$ and $(s\cdot t)_{\rm exp}=s_{\rm poly}t_{\rm exp}+s_{\rm exp}t_{\rm poly}+s_{\rm exp}t_{\rm exp}$
	\item $(s^t)_{\rm poly}$ is empty but $(s^t)_{\rm exp}=s^t$.
\end{itemize}

Let ${\bf N}_{\rm poly}[X]$ denote the polynomial semiring over $\mathbb{N}$ in variables $X$: it is just the free $\overline{\HSI}$-algebra, freely generated by $X$.
Now let us consider a further symbol~$\infty$ and consider all formal expressions of the form $s$ and $s+\infty$ where $s\in {\bf N}_{\rm poly}[X]$, where $\infty$ alone corresponds to a degenerate case with empty polynomial part. 
These formal sums become an HSI algebra in a simple way using the rules $x\cdot \infty=\infty$ (from which we deduce that $\infty+\infty=2\infty=\infty$) and define exponentiation by letting $x^\infty=\infty$ for all $x$ not equal to $1$, $\infty^x=\infty$ for all $x$, and $s^t=\infty$ if $s$ is not $1$ and $t$ is not constant.
We call this the \emph{large exponential algebra} and denote it by ${\bf E}(X)$.
\begin{pro}
The large exponential algebra lies in the variety of ${\bf N}$.
\end{pro}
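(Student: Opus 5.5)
The plan is to show that ${\bf E}(X)$ is a homomorphic image of the free algebra of ${\bf N}$ on the generating set $X$, namely the algebra ${\bf N}[X]$ of $\mathscr{L}$-term functions over $\mathbb{N}$ in the variables $X$, in the same spirit as the treatment of $M_{35}$ in Proposition~\ref{pro:3element}. Each element of ${\bf N}[X]$ is represented by a term $t$, which modulo $\HSI$ we write as $t_{\rm poly}+t_{\rm exp}$. Define
\[
\phi(t):=\begin{cases} t_{\rm poly} & \text{if } t_{\rm exp} \text{ is empty},\\ t_{\rm poly}+\infty & \text{otherwise,}\end{cases}
\]
with $\phi(t)=\infty$ when $t_{\rm poly}$ is empty and $t_{\rm exp}$ is not.

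The first step is well-definedness, and it is given directly by Lemma~\ref{lem:polysum}. If ${\bf N}\models s\approx t$, then $s_{\rm poly}$ and $t_{\rm poly}$ are the same polynomial (or both are empty), and $s_{\rm exp}$ is empty exactly when $t_{\rm exp}$ is empty. The second half needs a little care. If $t_{\rm exp}$ is nonempty it contains a strict exponential $u^v$ with $u\neq 1$ and $v$ nonconstant. I expect to argue that such a function is not a polynomial, using Theorem~\ref{thm:samerep} or a growth argument. Then ${\bf N}\models s_{\rm exp}\approx t_{\rm exp}$, as in Lemma~\ref{lem:polysum}, forces both to be empty or both to be nonempty. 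Surjectivity is immediate: a polynomial $p$ is the image of itself, and $p+\infty$ is the image of $p+2^{x}$ for any $x\in X$, with $2^x$ alone giving $\infty$.

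The second step is the homomorphism property, using the bulleted rules for $(s+t)_{\rm poly}$, $(s\cdot t)_{\rm poly}$ and so on listed before the definition.
\begin{itemize}
\item \emph{Addition.} This is clear, since the polynomial parts add and the exponential part is nonempty iff one of the summands has one.
\item \emph{Multiplication.} The polynomial part is $s_{\rm poly}t_{\rm poly}$. The exponential part is nonempty iff some factor carries an exponential part, because all terms are nonzero over $\mathbb{N}$. This matches the rule $x\cdot\infty=\infty$ together with distributivity in ${\bf E}(X)$. In particular $(p+\infty)(q+\infty)=pq+\infty$, since $p\infty=\infty$ and $\infty+\infty=\infty$.
\item \emph{Exponentiation.} Consider $s^t$ and split into cases.
\begin{itemize}
\item If $s$ is $1$ in ${\bf N}[X]$, then $s^t=1$.
\item If $t$ is a constant $c$, then $s^t=s^c$ expands as a product. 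Its polynomial part is $s_{\rm poly}^c$, and it has an exponential part iff $s$ does. This matches $(p+\infty)^c=p^c+\infty$ in ${\bf E}(X)$, and $1^x=1$ is consistent with this.
\item Otherwise $s^t$ is a strict exponential, so $\phi(s^t)=\infty$. This agrees with the rules $s^t=\infty$ for nonconstant $t$ and $x^\infty=\infty$.
\end{itemize}
\end{itemize}

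Some care is needed about what "$s$ is not $1$" and "$t$ is not constant" mean semantically. I must check that $\phi(s)=1$ iff $s\approx 1$, and that $\phi(t)$ is a constant iff $t$ is a constant function. Both follow once we know that an exponential part is never polynomial-valued.

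The main obstacle is precisely this claim: a nonempty sum of exponential terms never defines a polynomial function, and in particular is never a constant. It is needed both for well-definedness and for the exponentiation case. I would first try Lemma~\ref{lem:polysum} applied to $s_{\rm exp}\approx p$, where $p$ is a polynomial. That lemma gives $p\approx(s_{\rm exp})_{\rm poly}$, which is empty, and this is a contradiction.

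As a byproduct, this shows that ${\bf E}(X)$ satisfies $\HSI$, being a quotient of ${\bf N}[X]$. It therefore lies in the variety of ${\bf N}$.
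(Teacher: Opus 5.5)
Your proposal is correct and is essentially the paper's argument: the paper also realises ${\bf E}(X)$ as the quotient of the free algebra $F(X)$ of the variety of ${\bf N}$, keeping the polynomial part and collapsing all exponential summands to $\infty$, with Lemma~\ref{lem:polysum} giving that this is a well-defined congruence. Your explicit check of each operation, and your reduction of the semantic meaning of ``not $1$'' and ``not constant'' to Lemma~\ref{lem:polysum}, fill in details that the paper's three-line proof leaves implicit.
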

\begin{proof}
Let $F(X)$ be the free algebra for the variety of ${\bf N}$ with free generators the variables in the set $X$.
Every element may be represented as a $\mathscr{L}$-term which we may choose to be of the form $s_{\rm poly}+s_{\rm exp}$.
Lemma \ref{lem:polysum} shows that we may identify all exponential terms in a single block, $\infty$, and the corresponding equivalence relation is a congruence; the quotient is ${\bf E}(X)$.
\end{proof}

The large exponential algebra ${\bf E}(X)$ has many small quotients that are often easy to identify. The algebra $M_{35}$ in the proof of Proposition~\ref{pro:3element} can be seen as a quotient for example.
Any congruence $\theta$ of ${\bf N}_{\rm poly}[X]$ in which $\{1\}$ is a congruence class extends to a congruence on ${\bf E}(X)$ that agrees with $\theta$ in the polynomial part: $s+\infty$ is congruent to $t+\infty$ if $s\mathrel{\theta}t$.  
Because ${\bf N}_{\rm poly}[X]$ is just the free semiring, almost any commutative semiring extends to an HSI algebra by setting $s^t=\infty$ whenever $t$ is not a sum of $1$s and $s$ is not $1$.
We use ${\bf E}_-(X)$ to denote the constant free variant of ${\bf E}(X)$.

\section{RSI algebras: the constant-free signature}\label{sec:Martin}

Recall that ${\bf N}^{-}$ denotes the reduct $\langle
\mathbb{N},+,\cdot,\uparrow\rangle$ of ${\bf N}$ and let $\RSI$
denote those identities of ${\rm HSI}$ that do not involve the nullary
symbol $1$: the \emph{restricted (high) school identities}.  
We also let $\mathscr{L}^-$ denote the signature $\{+,\cdot,\uparrow\}$.
Martin~\cite{mar} showed that the identity
$(x^x+x^x)^y(y^y+y^y)^x\approx (x^y+x^y)^x(y^x+y^x)^y$ 
is satisfied by ${\bf N}^{-}$ but cannot be derived from $\RSI$, and used a generalisation of this law to show that there can be no finite axiomatisation for the equational theory of ${\bf N}^-$. 
We mention that Martin's proof that this law and generalisations are not derivable and the nonfinite axiomatisability argument are quite involved~\cite[pp.~99--118]{mar}, involving a careful analysis of the eventual dominance relation on one variable terms, and ultimately resting on the transcendality of the real number $e$.
In this section we present a counterexample small enough to be both human checkable and human findable (we do not address the nonfinite axiomatisability though).
Throughout this section, by
``counterexample'', we mean a model of $\RSI$ on which some valid
identity of ${\bf N}^{-}$ fails: in other words, a constant-free Gurevi\u{c} algebra.

The following identity implies the first mentioned example by Martin and is the actual base case of his infinite set used in the nonfinite axiomatisability result (page~97 of \cite{mar}; we have reordered the variables to appear in alphabetical order): 
\[
M(w,x,y,z):=
(w^x+w^x)^y(z^y+z^y)^x\approx (w^y+w^y)^x(z^x+z^x)^y.
\]  
This
identity is also satisfied by ${\bf N}^{-}$ as ${\bf N}^{-}$ is a reduct of
${\bf N}$, and using ${\rm HSI}$, both sides reduce to
$2^{x+y}w^{xy}z^{xy}$.  

The
similarities between $M(w,x,y,z)$ and Wilkie's identity $W(x,y)$ are
obvious, however the extra symmetry and simplicity of $M(w,x,y,z)$ over
$W(x,y)$ as well as the reduced nature of $\RSI$ compared to $\HSI$ enables considerably smaller counterexamples.  
The second author found the following 6-element RSI algebra by hand in the 1990s, which is a counterexample to $M(w,x,y,z)$ (the
identity fails at the $4$-tuple $(a,b,c,d)$):\\

\noindent \begin{tabular}{c|cccccc}
$+$&$a$&$b$&$c$&$d$&$e$&$f$\\
\hline
$a$&$f$&$f$&$f$&$f$&$f$&$f$\\
$b$&$f$&$d$&$f$&$f$&$f$&$f$\\
$c$&$f$&$f$&$a$&$f$&$f$&$f$\\
$d$&$f$&$f$&$f$&$f$&$f$&$f$\\
$e$&$f$&$f$&$f$&$f$&$f$&$f$\\
$f$&$f$&$f$&$f$&$f$&$f$&$f$\\
\end{tabular}\ \
\begin{tabular}{c|cccccc}
$\cdot$&$a$&$b$&$c$&$d$&$e$&$f$\\
\hline
$a$&$f$&$f$&$f$&$f$&$f$&$f$\\
$b$&$f$&$f$&$e$&$f$&$f$&$f$\\
$c$&$f$&$e$&$f$&$f$&$f$&$f$\\
$d$&$f$&$f$&$f$&$f$&$f$&$f$\\
$e$&$f$&$f$&$f$&$f$&$f$&$f$\\
$f$&$f$&$f$&$f$&$f$&$f$&$f$\\
\end{tabular}\ \
\begin{tabular}{c|cccccc}
$\uparrow$&$a$&$b$&$c$&$d$&$e$&$f$\\
\hline
$a$&$f$&$ b $&$f$&$f$&$f$&$f$\\
$b$&$f$&$f$&$f$&$f$&$f$&$f$\\
$c$&$f$&$f$&$f$&$f$&$f$&$f$\\
$d$&$f$&$f$&$ c $&$f$&$f$&$f$\\
$e$&$f$&$f$&$f$&$f$&$f$&$f$\\
$f$&$f$&$f$&$f$&$f$&$f$&$f$\\
\end{tabular}\\

We denote this algebra by ${\bf M}$.
The simplicity of ${\bf M}$ enables quick and thorough
checking of the identities in $\RSI$ by hand.  
Commutativity of
$+$ and
$\cdot$ are trivial, while routine observations show that all
other terms appearing in $\RSI$ laws are constantly
equal to $f$.  
For example, for every $i,j,k\in M$, we have 
\[
(ij)^k\in
(M\cdot M)\uparrow M=\{e,f\}\uparrow M=\{f\},
\]
while if $i^kj^k\neq f$
then
$\{i^k,j^k\}=\{b,c\}$, but it is easily seen that no $k\in M$ exists with this
property (no column in the $\uparrow$ table has both a $b$ and a $c$). 
This domination by the element $f$ reflects the fact that commutativity
of $+$ and $\cdot$ are the only identities of
$\RSI$ that can be applied to a term in the operations $+,\cdot,
\uparrow$ that is identically equal on $\mathbf{N}^-$ to either side of
$M(w,x,y,z)$.  This fact can also be turned into an easy syntactic proof of
the non-derivability of
$M(w,x,y,z)$ from $\RSI$.

It can be seen that ${\bf M}$ is generated by the elements $b,c$, and thus there is a
two variable law of ${\bf N}^{-}$ that fails on ${\bf M}$.  As $a=c+c$
and
$d=b+b$, the simplest of the possibilities is probably
\begin{eqnarray*}
\lefteqn{\left( (x+x)^y+(x+x)^y\right)^x\left(
(y+y)^x+(y+y)^x\right)^y}&&\\
&\phantom{abcdefdghabcdefdgh}\approx 
\left( (x+x)^x+(x+x)^x\right)^y\left( (y+y)^y+(y+y)^y\right)^x.&
\end{eqnarray*} 

One can easily give a lower bound of $4$ for the number of elements in
any counterexample to $(w^x+w^x)^y(z^y+z^y)^x\approx
(z^x+z^x)^y(w^y+w^y)^x$.  In the following lemmata, we assume that
${\bf A}$ is a model of $\RSI$ on which $M(w,x,y,z)$ fails at
$(a,b,c,d)$.  The first lemma is trivial.
\begin{lem}\label{lem:var}
$a\neq d$ and $b\neq c$.
\end{lem}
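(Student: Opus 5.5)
We argue by contraposition in each case: if $a=d$, or if $b=c$, then the two sides of $M(w,x,y,z)$ evaluated at $(a,b,c,d)$ coincide in ${\bf A}$, using only commutativity of $+$ and $\cdot$ from $\RSI$. This contradicts the standing assumption that $M(w,x,y,z)$ fails at $(a,b,c,d)$.

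First suppose $a=d$. Write $u:=a^b+a^b$ and $v:=a^c+a^c$, both computed in ${\bf A}$. The left side of $M(a,b,c,a)$ evaluates to $u^c\cdot v^b$. The right side is $(a^c+a^c)^b(a^b+a^b)^c$, which evaluates to $v^b\cdot u^c$. These are equal by commutativity of $\cdot$.

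Next suppose $b=c$. The left side of $M(a,b,b,d)$ is $(a^b+a^b)^b(d^b+d^b)^b$. The right side is $(a^b+a^b)^b(d^b+d^b)^b$, which is literally the same element. So the two sides agree with no law needed at all.

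In either case the identity holds at $(a,b,c,d)$, which is a contradiction, and so $a\neq d$ and $b\neq c$. There is no genuine obstacle here. The only care needed is to substitute correctly into the variable positions of $M(w,x,y,z)$, with $w\mapsto a$, $x\mapsto b$, $y\mapsto c$ and $z\mapsto d$, and to observe that swapping the roles of the two pairs turns one side into the other up to the order of the factors.
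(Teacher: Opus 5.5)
Your proof is correct and is exactly the routine verification the paper has in mind when it calls this lemma trivial (no written proof is given there). With $d=a$ the two sides of $M$ at $(a,b,c,d)$ differ only in the order of the two factors, so commutativity of $\cdot$ identifies them. With $c=b$ the two sides are literally the same element.
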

\begin{lem}\label{lem:mult}
$a,d$ are multiplicatively prime.
\end{lem}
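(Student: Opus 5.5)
Here I read ``multiplicatively prime'' as: $a\notin A\cdot A$ and $d\notin A\cdot A$, that is, neither $a$ nor $d$ is a product of two elements of ${\bf A}$. The plan is to argue by contradiction. If $a=uv$ for some $u,v\in A$, I want to show that $M$ holds at $(a,b,c,d)$. That contradicts the standing assumption, and the case of $d$ then follows by the symmetry of $M(w,x,y,z)$ under $w\leftrightarrow z$, $x\leftrightarrow y$. So the goal is an $\RSI$-derivation of $M(uv,x,y,z)$, treating $u,v$ as fresh variables, or failing that, an argument that uses the specific failure at $(a,b,c,d)$.

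The first thing I would do is exploit that, in the absence of the constant $1$, the only obstruction to proving $M$ is the ``doubling'' $t+t$, which cannot be rewritten as $2t$. A product base lets a factor be pulled out of a doubled sum using only distributivity:
\[
(uv)^x+(uv)^x\approx u^xv^x+u^xv^x\approx u^x(v^x+v^x).
\]
Hence the left side of $M(uv,x,y,z)$ becomes $u^{xy}(v^x+v^x)^y(z^y+z^y)^x$, and the right side becomes $u^{xy}(v^y+v^y)^x(z^x+z^x)^y$. I then plan to redistribute the monomial factor $u^{xy}$ or $v^{xy}$ through the other doubled factor using $(x^y)^z\approx x^{yz}$ and $(xy)^z\approx x^zy^z$. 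For example,
\[
v^{xy}(z^y+z^y)^x\approx \bigl((vz)^y+(vz)^y\bigr)^x .
\]
This gives several equivalent rewritings of each side. I will look for a chain in which the left and right sides meet, by moving $u$ into one doubled factor on one side and into the other doubled factor on the other side.

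The main obstacle is that these rewritings alone only seem to convert $M(uv,x,y,z)$ into instances such as $M(v,x,y,uz)$ or $M(u,x,y,vz)$. They do not obviously close the gap, because the doubled factors stay rigid under $\RSI$. If a purely syntactic derivation does not materialise, the fallback is semantic. I would use the transfer just found: failure of $M$ at $(uv,b,c,d)$ is equivalent, in ${\bf A}$, to failure at $(v,b,c,ud)$ and at $(u,b,c,vd)$. I would then combine this with the identities $(t+t)^y\cdot s^{y}\approx (ts+ts)^y$ and Lemma~\ref{lem:var} to force one of the two sides to collapse to a common value. I expect this step, finding the precise rearrangement that makes both sides literally equal, to be where the real work lies.
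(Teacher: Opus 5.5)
Your setup is correct and matches the paper: ``multiplicatively prime'' means not a product, the goal is an $\RSI$-derivation of $M(uv,x,y,z)$, and your first rewriting is exactly the paper's first move. But the proposal stops there. You conclude that the rewritings ``only'' turn $M(uv,x,y,z)$ into $M(v,x,y,uz)$ or $M(u,x,y,vz)$, and that is where the argument ends. The semantic fallback is not an argument. You never say which identity would force the two sides to agree, and nothing about $\mathbf{A}$ beyond the failure of $M$ and Lemma~\ref{lem:var} is available to do it. As written, the lemma is not proved.

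The missing idea is to keep transferring. After the first transfer, the \emph{other} base $z$ has become a factor of a product base, so it can be moved too. Write $L(w,z)$ for $(w^x+w^x)^y(z^y+z^y)^x$, so that $M(w,x,y,z)$ reads $L(w,z)\approx L(z,w)$ up to commutativity. Your computation gives $\RSI\vdash L(w_1w_2,z)\approx L(w_2,w_1z)$, and the same steps give $\RSI\vdash L(w,z_1z_2)\approx L(z_1w,z_2)$. Hence
\[
L(uv,z)\approx L(v,uz)\approx L(zv,u)\approx L(z,vu),
\]
and $L(z,vu)$ is, up to commutativity, the right-hand side $((uv)^y+(uv)^y)^x(z^x+z^x)^y$.

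So you should not try to make the two sides meet by moving $u$ alone; that only reproduces $M$ at a new tuple, as you observed. Instead, apply three transfers to the left-hand side only, moving $u$, then $z$, then $v$; this swaps the two bases. That is exactly the paper's chain with $a=ef$: $e$ moves into the $d$-factor, then $d$ into the $f$-factor, then $f$ into the $e$-factor, ending at $(d^b+d^b)^c(a^c+a^c)^b$. In particular, $M(uv,x,y,z)$ is an outright $\RSI$-consequence even though $M(w,x,y,z)$ is not, so no semantic argument is needed.
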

\begin{proof}
Say $a=ef$ for some $e,f\in A$.  Then using $\RSI$ we get
$(a^b+a^b)^c(d^c+d^c)^b=
(e^bf^b+e^bf^b)^c(d^c+d^c)^b
=e^{bc}(f^b+f^b)^c(d^c+d^c)^b
=(f^b+f^b)^c(d^ce^c+d^ce^c)^b
=(f^b+f^b)^c(e^c+e^c)^bd^{bc}
=(f^bd^b+f^bd^b)^c(e^c+e^c)^b
=(d^b+d^b)^c(e^c+e^c)^bf^{bc}
=(d^b+d^b)^c(e^cf^c+e^cf^c)^b
=(d^b+d^b)^c(a^c+a^c)^b$.  The case when $d$ is multiplicatively
composite follows by symmetry.
\end{proof}
\begin{lem}\label{lem:add}
$b,c$ are additively prime.
\end{lem}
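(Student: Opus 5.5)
Lemma~\ref{lem:add} is the additive analogue of Lemma~\ref{lem:mult}. We argue by contradiction. Suppose $b=e+f$ for some $e,f\in A$. We then use $\RSI$ to rewrite the left side $(a^b+a^b)^c(d^c+d^c)^b$ into the right side $(a^c+a^c)^b(d^b+d^b)^c$, contradicting the assumption that $M(w,x,y,z)$ fails at $(a,b,c,d)$. The case where $c$ is additively composite then follows by symmetry, since swapping $x$ and $y$ in $M(w,x,y,z)$ exchanges the two sides.

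The first step is to pull $b=e+f$ out of the exponent using $x^{y+z}\approx x^yx^z$ and $(xy)^z\approx x^zy^z$:
\[
(a^b+a^b)^c(d^c+d^c)^b=(a^ea^f+a^ea^f)^c(d^c+d^c)^e(d^c+d^c)^f .
\]
Since $\RSI$ has no constant $1$, we cannot factor $a^ea^f+a^ea^f$ as $a^e(a^f+a^f)$ naively. But distributivity gives $a^ea^f+a^ea^f=a^e(a^f+a^f)$ directly, since $x\cdot(y+z)\approx xy+xz$ is in $\RSI$. Hence the first factor becomes $a^{ec}(a^f+a^f)^c$.

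The second step is the shuttling manoeuvre from the proof of Lemma~\ref{lem:mult}. We move the factor $a^{ec}$ inside $(d^c+d^c)^e$ to obtain $(a^cd^c+a^cd^c)^e=((ad)^c+(ad)^c)^e$. Then we pull out $d^{ce}$ as a factor of the form $d^{ec}$ and absorb it elsewhere. The aim is to transform the product
\[
(a^f+a^f)^c\,(d^c+d^c)^f\,a^{ec}\,(d^c+d^c)^e
\]
into
\[
(a^c+a^c)^e(a^c+a^c)^f(d^b+d^b)^c=(a^c+a^c)^b(d^e d^f+d^ed^f)^c .
\]
The moves available are the identity $(u^v+u^v)^w\,t^{vw}=(u^vt^v+u^vt^v)^w=((ut)^v+(ut)^v)^w$ and its reverse. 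These let monomial factors $a^{ec}$, $d^{ec}$, $a^{fc}$, $d^{fc}$ migrate between the bracketed sums. Concretely, $(d^c+d^c)^e a^{ec}=((ad)^c+(ad)^c)^e=(a^c+a^c)^e d^{ce}$. The factor $d^{ce}=d^{ec}$ is then absorbed into $(a^f+a^f)^c$ as $(a^fd^e+a^fd^e)^c$.

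This bookkeeping is the main obstacle. A single term $(a^f+a^f)^c$ with exponent base $f$ must eventually become $(a^c+a^c)^f$, which swaps the roles of inner and outer exponents. This is only possible by routing through the $d$-factor, exactly as the $e,f$ factors were routed in Lemma~\ref{lem:mult}. I would try symmetric handling of the $e$- and $f$-parts. First treat the $e$-part as above, converting $(d^c+d^c)^e a^{ec}$ into $(a^c+a^c)^e d^{ec}$. For the $f$-part, split the sum $(a^f+a^f)^c$ in the reverse direction, writing $(d^c+d^c)^f=(d^c+d^c)^f$ together with the factor $(a^f+a^f)^c$ now carrying $d^{ec}$. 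Then recombine using $d^{e}d^{f}=d^{b}$ inside $(\cdot+\cdot)^c$.

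If this direct route stalls, the fallback is to note that it suffices to reach any term symmetric under the exchange of the roles of $(a,b,c,d)$ that the two sides share. In the spirit of the reduction to $2^{x+y}w^{xy}z^{xy}$, the candidate is
\[
(a^ed^e+a^ed^e)^c\,(a^fd^f+\dots)^c\cdots .
\]
We would then show that both sides reduce to it using only the migration identity and $x^{y+z}\approx x^yx^z$.
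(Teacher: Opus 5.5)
Your strategy is the paper's: a direct $\RSI$ rewriting chain in which monomial factors migrate between blocks of the form $(u+u)^w$. Your first half matches the first five steps of the paper's chain, reaching
\[
(a^fd^e+a^fd^e)^c\,(a^c+a^c)^e\,(d^c+d^c)^f .
\]
The gap is that you stop there. You call the remaining bookkeeping ``the main obstacle'', and your sketch for the $f$-part (``writing $(d^c+d^c)^f=(d^c+d^c)^f$ together with\dots'') is not an actual rewriting step. The fallback is hedged and its candidate term is never specified. Since the whole content of this lemma is the explicit chain, the proof is unfinished as written. The fallback also does not help: the reduction to $2^{x+y}w^{xy}z^{xy}$ you invoke needs the constant $1$, which $\RSI$ lacks.

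The missing move is your $e$-step repeated with the roles of $a$ and $d$ interchanged. Apply distributivity to the block you just created, but now pull out the $a$-part rather than the $d$-part:
\[
(a^fd^e+a^fd^e)^c=a^{fc}(d^e+d^e)^c,\qquad a^{fc}(d^c+d^c)^f=(a^cd^c+a^cd^c)^f=(a^c+a^c)^f\,d^{cf}.
\]
Then absorb $d^{cf}$ into the other block, giving $d^{cf}(d^e+d^e)^c=(d^ed^f+d^ed^f)^c=(d^b+d^b)^c$. Finally merge $(a^c+a^c)^e(a^c+a^c)^f=(a^c+a^c)^b$. This yields $(d^b+d^b)^c(a^c+a^c)^b$, exactly as in the paper, so no fallback is needed. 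Your symmetry argument for the case where $c$ is additively composite is fine.
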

\begin{proof}
Say $b=e+f$ for some $e,f\in A$.  Then
$(a^b+a^b)^c(d^c+d^c)^b
=(a^ea^f+a^ea^f)^c(d^c+d^c)^e(d^c+d^c)^f
=a^{ec}(a^f+a^f)^c(d^c+d^c)^e(d^c+d^c)^f
=(a^f+a^f)^c(d^ca^c+d^ca^c)^e(d^c+d^c)^f
=(a^f+a^f)^c(a^c+a^c)^ed^{ce}(d^c+d^c)^f
=(a^fd^e+a^fd^e)^c(a^c+a^c)^e(d^c+d^c)^f
=a^{fc}(d^e+d^e)^c(a^c+a^c)^e(d^c+d^c)^f
=(d^e+d^e)^c(a^c+a^c)^e(d^ca^c+d^ca^c)^f
=(d^e+d^e)^c(a^c+a^c)^e(a^c+a^c)^fd^{cf}
=(d^ed^f+d^ed^f)^c(a^c+a^c)^e(a^c+a^c)^f
=(d^b+d^b)^c(a^c+a^c)^b$.  The case when $c$ is additively composite
follows by symmetry.
\end{proof}
\begin{lem}
${\bf A}$ has at least $4$ elements.
\end{lem}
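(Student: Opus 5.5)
The plan is to show that $a,b,c,d$ are pairwise distinct, which already gives four elements. Lemma~\ref{lem:var} supplies $a\neq d$ and $b\neq c$, so four distinctness claims remain: $a\neq b$, $a\neq c$, $d\neq b$ and $d\neq c$. Each will be obtained by contradiction, using Lemmas~\ref{lem:mult} and~\ref{lem:add} together with the $\RSI$ laws.

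The key observation is that every sum $u+u$ is both additively and multiplicatively composite. It is additively composite trivially, and multiplicatively composite because $u+u=u(v+v)$ can be written as a product whenever $u$ itself factors. Wherever an equality would force one of $a,d$ to equal a term such as $b+b$, or would let us rewrite $a$ as a product, Lemma~\ref{lem:mult} is contradicted; dually for $b,c$ and Lemma~\ref{lem:add}.

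It is more direct, however, to show that each coincidence makes the failing instance derivable from $\RSI$. Suppose $a=b$. The left side of $M$ contains $(a^a+a^a)^c$. By $(x^y)^z\approx x^{yz}$ and distributivity, $a^a+a^a=a^a\cdot\ldots$ is not directly helpful. The better route is to note that $a^b+a^b=a^{b}(?)$, which requires a constant, so that fails too. I would instead apply symmetry. Setting $w=x$ in $M(w,x,y,z)$ gives $(x^x+x^x)^y(z^y+z^y)^x\approx(x^y+x^y)^x(z^x+z^x)^y$. The plan is to check whether this specialisation is derivable by the same shuffling manipulations used in the proofs of Lemmas~\ref{lem:mult} and~\ref{lem:add}, exploiting that $x^x+x^x$ raised to $y$ can be split as $((x+x)\cdot x^{x-1})$. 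That last step is not available, which shows the real tool has to be the primality lemmas.

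So the concrete route is as follows. If $a=b$, then $b$ occurs as the base $a$, and exponentiation gives $a^b=a^a$. Rather than reaching for a contradiction through derivability, I would aim for the weaker, but sufficient, claim that $\{a,d\}\cap\{b,c\}$ may be nonempty while still $|A|\ge4$. The elements $a^b+a^b$ and $d^c+d^c$ are sums, hence additively composite, so by Lemma~\ref{lem:add} they differ from $b$ and $c$. Likewise $a^c+a^c$ and $d^b+d^b$ differ from $b$ and $c$.

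If $A$ had at most three elements, then $\{b,c\}$ together with the set $S$ of these four sums would have to fit in $A$. This forces $S$ to consist of a single element $s$, and the two sides of $M$ both become $s^c s^b$ after commutativity. That contradicts the assumed failure of $M$. I expect the only delicate step to be this final case count: confirming that with $|A|\le3$ all four sums must collapse to one element. If $|S|=2$, then $A=\{b,c\}\cup S$ already has four elements. So $|S|=1$, the two sides agree, and this contradiction gives the required lower bound.
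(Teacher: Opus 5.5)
Your final counting argument is correct and is essentially the paper's proof. Lemmas~\ref{lem:var} and~\ref{lem:add} make $b\neq c$ additively prime, so both differ from the four doubled sums. Those sums cannot all coincide, since then both sides of $M$ at $(a,b,c,d)$ would agree by commutativity of $\cdot$. The paper runs the same count, alongside a multiplicative analogue using Lemma~\ref{lem:mult}, and your argument needs only the additive half.

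You should strip out the earlier paragraphs, because they contain claims that are false in general. Pairwise distinctness of $a,b,c,d$ fails in the paper's Mace4-found $6$-element example, where $M$ fails at $(a,a,b,b)$. Also, a doubled sum need not be multiplicatively composite: in that example $f=e+e$ is not a product.
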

\begin{proof}
As $M(w,x,y,z)$ fails at $(a,b,c,d)$, we have
$(a^b+a^b)^c(d^c+d^c)^b\neq  (a^c+a^c)^b(d^b+d^b)^c$ and at least
one of $a^b+a^b\neq d^b+d^b$ and $d^c+d^c\neq a^c+a^c$ holds.  Thus
there are at least two multiplicatively composite elements and two
additively composite elements.  Combining this with Lemmata
\ref{lem:var}, \ref{lem:mult} and \ref{lem:add}, it follows that there are
at least four elements in $A$.
\end{proof}
A ``by-hand'' proof that pushes this lower bound beyond 4 appears viable, but it seems unobvious how to avoid a lengthy and inelegant case analysis.
Instead we turned to Mace4, which finds no examples on fewer than 6 elements, but after just under 6000 seconds (on a Macbook Air M3, starting from size 2 and working upwards\footnote{After a further 5 million seconds, Mace4 was showing 28 examples on 6 elements, though isomorphism testing was not performed as the search had not finished.}) returns the following counterexample.

\noindent \begin{tabular}{c|cccccc}
$+$&$a$&$b$&$c$&$d$&$e$&$f$\\
\hline
$a$&$c$&$e$&$c$&$c$&$f$&$f$\\
$b$&$e$&$f$&$f$&$f$&$f$&$f$\\
$c$&$c$&$f$&$c$&$c$&$f$&$f$\\
$d$&$c$&$f$&$c$&$c$&$f$&$f$\\
$e$&$f$&$f$&$f$&$f$&$f$&$f$\\
$f$&$f$&$f$&$f$&$f$&$f$&$f$\\
\end{tabular}\ \
\begin{tabular}{c|cccccc}
$\cdot$&$a$&$b$&$c$&$d$&$e$&$f$\\
\hline
$a$&$c$&$d$&$c$&$c$&$c$&$c$\\
$b$&$d$&$c$&$c$&$c$&$c$&$c$\\
$c$&$c$&$c$&$c$&$c$&$c$&$c$\\
$d$&$c$&$c$&$c$&$c$&$c$&$c$\\
$e$&$c$&$c$&$c$&$c$&$c$&$c$\\
$f$&$c$&$c$&$c$&$c$&$c$&$c$\\
\end{tabular}\ \
\begin{tabular}{c|cccccc}
$\uparrow$&$a$&$b$&$c$&$d$&$e$&$f$\\
\hline
$a$&$c$&$e$&$c$&$c$&$c$&$c$\\
$b$&$e$&$c$&$c$&$c$&$c$&$c$\\
$c$&$c$&$c$&$c$&$c$&$c$&$c$\\
$d$&$c$&$c$&$c$&$c$&$c$&$c$\\
$e$&$c$&$c$&$c$&$c$&$c$&$c$\\
$f$&$a$&$b$&$c$&$e$&$d$&$c$\\
\end{tabular}\\
It is routine to verify that the left hand side of $M(a,a,b,b)$ takes the value $c$, while the right hand side takes the value $e$.  Note that this also implies that the example fails the first mentioned version of Martin's  law $M(x,x,y,y)$.

Ehrenfeucht \cite{ehr} proved that the one variable $\mathscr{L}$-terms
are well ordered by the relation of eventual dominance (see also~\cite[Theorem~A.5]{alsjac2}).  Gurevi\u{c}
conjectured~\cite[Remark~2 on page~29]{gur90} that the smallest one variable identity of ${\bf
N}$ not derivable from
$\HSI$ is 
\begin{eqnarray*}
\lefteqn{((1+x)^x+(2+x)^x)^{2^x}((2+x+x^3)^{2^x}+
(4+x^2+x^3)^{2^x})^x}&&\\
&\phantom{abcdefdgh}\approx 
((1+x)^{2^x}+(2+x)^{2^x})^x((2+x+x^3)^x+(4+x^2+x^3)^x)^{2^x}&
\end{eqnarray*} 
For terms in the restricted language $\{+,\cdot,\uparrow\}$ we conjecture
that $M(x,x,x^x,x+x)$ is the smallest exotic identity.  Mace4 takes less than 500 seconds to find a 7-element counterexample of this law, with comparable simplicity to our counterexample for $M(w,x,y,z)$.

There are many other basic variants.  The law
\[
M'(w,x,y,z):= ((w+w)^x+(y+y)^x)^z (w^z+y^z)^x \approx (w^x+y^x)^z ((w+w)^z+(y+y)^z)^x
\]
is closer to the theme of Wilkie's original law (both sides are equal to $2^{xz}(w^x+y^x)^z(w^z+y^z)^x$), while a skew version of Martin's law is:
\[
M''(w,x,y,z)'':=(w^x+(w+w)^x)^y(z^y+(z+z)^y)^x\approx (w^y+(w+w)^y)^x(z^x+(z+z)^x)^y.
\]
Both $M'(w,x,y,z)$ and $M''(w,x,y,z)$ fail on the second displayed counterexample for Martin's identity at $w=x=a$ and $y=z=b$.  
Mace4 returns the same model, so no smaller counterexample exists.  The first displayed counterexample satisfies both these laws, but fails $M(w,x,y,z)$, so $M(w,x,y,z)$ does not follow from $\RSI\cup\{M'(w,x,y,z)\}$.
\section{Two element models of \textsf{RSI}}
\begin{figure}
\begingroup
\setlength{\arraycolsep}{5pt}
\renewcommand{\arraystretch}{1.65}
\[
\begin{array}{@{}*{6}{c}@{}}
\triple{1}{\Taaaa}{\Taaaa}{\Taaaa} &
\triple{2}{\Taaaa}{\Taaab}{\Taaaa} &
\triple{3}{\Taaaa}{\Taaab}{\Taabb} &
\triple{4}{\Taaaa}{\Taaab}{\Tbbbb} &
\triple{5}{\Taaab}{\Taaaa}{\Taaaa} &
\triple{6}{\Taaab}{\Taaab}{\Taaaa}
\\\\
\triple{7}{\Taaab}{\Taaab}{\Taaab} &
\triple{8}{\Taaab}{\Taaab}{\Taabb} &
\triple{9}{\Taaab}{\Taaab}{\Tbbbb} &
\triple{10}{\Taaab}{\Tabbb}{\Taaaa} &
\triple{11}{\Taaab}{\Tabbb}{\Taaba} &
\triple{12}{\Taaab}{\Tabbb}{\Taabb}
\\\\
\triple{13}{\Taaab}{\Tabbb}{\Tbbbb} &
\triple{14}{\Taaab}{\Tbbbb}{\Tbbbb} &
\triple{15}{\Tabba}{\Taaaa}{\Taaaa} &
\triple{16}{\Tabba}{\Taaab}{\Taaaa} &
\triple{17}{\Tabba}{\Taaab}{\Taabb} &
\triple{18}{\Tabba}{\Taaab}{\Tbbbb}
\end{array}
\]
\endgroup\caption{The $2$-element models of \RSI}\label{fig:2element}
\end{figure}


Mace4 finds 18 distinct 2-element models of \RSI; see Figure \ref{fig:2element}, where they are displayed in lexicographic order.  
(It also finds there are 445 models on $3$ elements and 16,535 on $4$ elements.)
Note that, to save space, we list the tables for addition, then multiplication, then exponentiation and have suppressed the row and column labels.  Thus $M_1$ is really the following algebra:
\[
\begin{tabular}{c|cc}
$+$&$a$&$b$\\
\hline
$a$&$a$&$a$\\
$b$&$a$&$a$
\end{tabular}\qquad \begin{tabular}{c|cc}
$\cdot$&$a$&$b$\\
\hline
$a$&$a$&$a$\\
$b$&$a$&$a$
\end{tabular}\qquad\begin{tabular}{c|cc}
$\uparrow$&$a$&$b$\\
\hline
$a$&$a$&$a$\\
$b$&$a$&$a$
\end{tabular}
\]

\begin{pro}\label{pro:2RSI}
Every $2$-element RSI algebra lies in the variety of ${\bf N}^-$.
\end{pro}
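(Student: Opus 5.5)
We go through the 18 models of Figure~\ref{fig:2element} and, for each one, show that it lies in the variety of ${\bf N}^-$. Three tools are available, and we expect every model to be covered by at least one of them.

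\emph{Tool 1: reducts of idempotent HSI algebras.} If a 2-element RSI algebra satisfies $x\cdot x\approx x$ and has an element $e$ that acts as a multiplicative identity with $e^x\approx e$ and $x^e\approx x$, then naming $e$ as $1$ gives an HSI algebra. Theorem~\ref{thm:x2x} puts this expansion in the variety of ${\bf N}$. Taking the $\{+,\cdot,\uparrow\}$-reduct then gives membership in the variety of ${\bf N}^-$, since reducts of members of a variety lie in the variety of the reduct.

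A model whose multiplication table is $\Taaab$ (so $a\cdot a=a$, $b\cdot b=b$) has $b$ as multiplicative identity. For such models we only check whether $b^x=b$ and $x^b=x$ hold. This reads straight off the exponentiation table: the exponentiation column at $b$ must be $(a,b)^T$, and row $b$ must be constantly $b$. These conditions hold for $\Taabb$, and the other models must be checked individually.

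\emph{Tool 2: quotients of ${\bf E}_-(X)$ and of free algebras.} For models where the exponentiation is ``absorbing'', for example constant $a$ or mostly $a$, we realise the model as a quotient of the constant-free large exponential algebra ${\bf E}_-(X)$. Take a semiring congruence on the free constant-free semiring ${\bf N}_{\rm poly}[X]$ onto the $\{+,\cdot\}$-reduct, send $\infty$ (and anything of the form $s+\infty$) to the absorbing element, and check that the exponentiation table is induced. Lemma~\ref{lem:polysum} in the constant-free case, which already has a self-contained proof, guarantees that ${\bf E}_-(X)$ lies in the variety of ${\bf N}^-$.

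The candidate semiring quotients are the 2-element ones: $\mathbb{Z}_2$-like, lattice-like, and constant ones. Exponentiation $s^t$ with $t$ not a constant-free ``sum of variables of degree 0'' is $\infty$. In the constant-free setting, however, every nonconstant $t$ gives $s^t=\infty$ only when $s$ is a nonvariable or $t$ is nonlinear. So we need care with $x^y$ itself, which ${\bf E}_-$ does send to $\infty$. Models in which $\uparrow$ is not absorbing on generators must then be handled as quotients of ${\bf N}^-$ or of the one-generated free algebra, using eventual-dominance style partitions as in the treatment of $M_{35}$.

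\emph{Tool 3: direct constructions.} Some models are quotients of ${\bf N}^-$ itself or of a subalgebra of ${\bf N}^-$ (for instance parity, $\{$even, odd$\}$, or $\{1\}$ versus $\{\geq 2\}$ type partitions), and these are handled directly. Trivial-looking models such as $M_1$ (all operations constant) are in every nontrivial variety, since they are quotients of any free algebra, so they need no work.

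The main obstacle is the handful of models that are neither multiplicatively idempotent with a usable identity element nor have an absorbing exponentiation; the $\Tabba$ addition models, for example, behave like $\mathbb{Z}_2$. For these, the first thing to try is to adjoin a new element $1$ to obtain a 3-element HSI algebra. Proposition~\ref{pro:3element} then places it in the variety of ${\bf N}$, and taking the reduct and then a subalgebra recovers the 2-element RSI model. So for each remaining model we search the 44 three-element HSI algebras of Burris and Lee for one having the model as an RSI-subreduct, that is, a 2-element subset closed under $+,\cdot,\uparrow$ on which the tables agree. If this fails for some model, we fall back on a tailored congruence on ${\bf E}_-(X)$.
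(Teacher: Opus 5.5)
Your proposal is a strategy rather than a verification. Carried out as written, it provably fails for three of the 18 models: $M_4$, $M_9$ and $M_{18}$. Each has multiplication $a\cdot a=a\cdot b=a$, $b\cdot b=b$ and exponentiation constantly equal to $b$.

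\begin{itemize}
\item None of them is a reduct of a $2$-element HSI algebra. The multiplicative identity is $b$, but $a^b=b\neq a$.
\item None of them lies even in the variety generated by ${\bf E}_-(X)$. There every exponential is $\infty$ and $\infty\cdot z=\infty$, so ${\bf E}_-(X)\models x^y\cdot z\approx x^y$. In all three models, however, $a^a\cdot a=b\cdot a=a\neq b=a^a$. So no congruence on ${\bf E}_-(X)$, tailored or otherwise, can produce them.
\item Your $3$-element search cannot succeed either. Since $a^b\neq a$, the host must be $\{1,a,b\}$ with $1\notin\{a,b\}$. Then $a^{1+1}=a\cdot a=a$, while $a^a=a^b=b$, which forces $1+1=1$ and hence $x+x\approx x$. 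This contradicts $b+b=a$ in $M_4$ and $M_{18}$.
\item For $M_9$, the same reasoning applied to $a^{1+a}=a\cdot a^a=a\cdot b=a$ forces $1+a=1$. Then $b=b(1+a)=b+ba=b+a=a$, a contradiction.
\item The partitions of ${\bf N}^-$ you list under Tool~3 do not help. Parity gives $M_{17}$, and $\{1\}$ versus $\{\geq 2\}$ gives $M_3$.
\end{itemize}

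The missing step is what the paper actually does for these cases. It exhibits each of $M_4$, $M_9$, $M_{18}$ as a subreduct of a \emph{$4$-element} multiplicatively idempotent HSI algebra and applies Theorem~\ref{thm:x2x}, which has no size restriction. Your host search must be extended at least that far. Alternatively, you would have to prove directly the corresponding separation facts about one-variable $\mathscr{L}^-$-terms. For $M_4$, for instance, you would need that no valid law of ${\bf N}^-$ equates a proper sum with a product of variables and exponentials. This is not routine, and the paper does not attempt it.

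There are also two smaller inaccuracies.

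First, $M_1$ is not in every nontrivial variety: it fails $x^y\approx x$, which holds in $M_3$. What is true, and what the paper's quotient of the subalgebra $\{n\geq 2\}$ of ${\bf N}^-$ (collapsing $\{n\geq 3\}$) uses, is that $2$ is not a sum, product or power of elements of that subalgebra.

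Second, your ${\bf E}_-$ recipe sends every $s+\infty$ to the exponentiation value. That requires the value to be additively absorbing, which fails for $M_{13}$, $M_{14}$, $M_{15}$ and $M_{16}$. For these you must instead sort $s+\infty$ by a property of $s$: being nonempty, having a nonzero linear part, or the parity of the linear coefficient. For $M_{16}$, evaluating at $x=1$ modulo $2$ with $\infty\mapsto 0$ works. This matters, because $M_{16}$ also has no $3$-element host: $b^{1+1}=b$ forces $1+1=1$, against $b+b=a$. The paper handles $M_{16}$ with a $4$-element idempotent host instead.
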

\begin{proof} The proof covers the remainder of the section.
 Five of the 18 models are reducts of $2$-element models of \HSI, hence lie in the variety of $\langle \mathbb{N},+,\cdot,\uparrow\rangle$ by~\cite{asa} (or from Theorem~\ref{thm:x2x}: these are $M_3$, $M_8$, $M_{11}$, $M_{12}$, $M_{17}$).  
 We now give arguments to show that all of the remaining $13$ also lie in the variety of $\langle \mathbb{N},+,\cdot,\uparrow\rangle$, so are not counterexamples to any true law $\langle \mathbb{N},+,\cdot,\uparrow\rangle$.  
 Theorem~\ref{thm:x2x} cannot be applied directly as Wilkie's reduction (on which the proof depends) makes intrinsic use of the presence of constants.
Nevertheless, in all of the idempotent cases we are able to indirectly apply Theorem~\ref{thm:x2x} in the following way: if an RSI algebra ${\bf A}$ is a subreduct of a multiplicatively idempotent HSI algebra ${\bf B}$, then Theorem~\ref{thm:x2x} shows that ${\bf B}$ is in the variety of ${\bf N}$, and so in the reduct signature $\mathscr{L}^-$, the RSI algebra~${\bf A}$ is in the variety of ${\bf N}^-$.  
We also give direct proofs where they are available.  
 
 We let ${\bf N}^-[x]$ denote the free algebra in the variety of ${\bf N}^-$, noting that it consists of the $1$-variable $\mathscr{L}^-$-term functions on $\mathbb{N}$.
Each element can be represented as a term in $x$; while distinct terms can nontrivially represent the same term function (the two sides of Martin's identity for example), we will not encounter any challenging examples in the proofs. 
 The constant-free large exponential algebra ${\bf E}_-(x)$ will also be used frequently.

\subsection{$M_1$}  Let $S$ be the subalgebra of ${\bf N}^-$ consisting of all numbers greater than $1$.  Now take the quotient of $S$ determined by identifying all numbers $3$ and greater.  This is isomorphic to $M_1$ under the map that assigns $2$ to $b$ and the equivalence class $S\backslash\{2\}$ to $a$.

\subsection{$M_2$} Mace4 finds this as a subreduct of the following 4-element idempotent HSI algebra:
\[
\begin{tabular}{r|rrrr}
$+$ &  1 & 2 & $a$ & $b$ \\
\hline
    1 &  2 & 2 & $a$ & $a$ \\
    2 & 2 & 2 & $a$ & $a$ \\
    $a$ & $a$ & $a$ & $a$ & $a$ \\
    $b$ & $a$ & $a$ & $a$ & $a$ 
\end{tabular} \hspace{.5cm}
\begin{tabular}{r|rrrr}
$\cdot$ & 1 & 2 & $a$ & $b$ \\
\hline
    1  & 1 & 2 & $a$ & $b$\\
    2 & 2 & 2 & $a$ & $a$ \\
    $a$ & $a$ & $a$ & $a$ & $a$ \\
    $b$ & $b$ & $a$ & $a$&  $b$ 
\end{tabular} \hspace{.5cm}
\begin{tabular}{r|rrrr}
$\uparrow$ & 1 & 2 & $a$ & $b$ \\
\hline
    1 & 1 & 1 & 1 & 1 \\
    2 & 2 & 2 & $a$ & $a$ \\
    $a$ & $a$ & $a$ & $a$ & $a$ \\
    $b$ & $b$ & $b$ & $a$ & $a$ 
\end{tabular}
\]
There is an easy direct proof also.
Consider the subset $B:=\{x^i\mid i\geq 1\}$ of ${\bf N}^-[x]$.  It is routine to demonstrate that the equivalence relation with blocks equal to $B$ and $A:={\bf N}^-[x]\backslash B$ is a congruence; the quotient is isomorphic to $M_2$ under the map that assigns $B$ to $b$ and $A$ to $a$.  The following observations give the justification.
\begin{itemize}
\item No expression of the form $x^n$ arises as the sum of two elements of ${\bf N}^-[x]$.  
Certainly the sum of two additive polynomials cannot be equivalent to one involving no additions; all other terms have faster than polynomial growth, so cannot add to any positive function to arrive at $x^n$.
\item A product $u\cdot v$ in ${\bf N}^-[x]$ equals $x^n$ if and only if $u=x^{i}$ and $v=x^j$ and $i+j=n$ (so that $B\cdot B\subseteq B$ but $B\cdot A=A\cdot B\subseteq A$ and $A\cdot A\subseteq A$). 
\item Exponentiation is straightforward as the elements of $B$ do not have exponential growth. 
\end{itemize}

\subsection{$M_3$} This is an HSI algebra reduct, isomorphic to algebra $4$ in Burris and Lee~\cite[Theorem~4.1]{burlee92} (a quotient of ${\bf N}$).

\subsection{$M_4$} Mace4 finds this as a subreduct of the following $4$-element multiplicatively idempotent HSI algebra:
\[
\begin{tabular}{r|rrrr}
$+$ & $1$ & $2$ & $a$ & $b$\\
\hline
    $1$ & $2$ & $2$ & $2$ & $2$ \\
    $2$ & $2$ & $2$ & $2$ & $2$ \\
    $a$ & $2$ & $2$ & $a$ & $a$ \\
    $b$ & $2$ & $2$ & $a$ & $a$
\end{tabular} \hspace{.5cm}
\begin{tabular}{r|rrrr}
$\cdot$ & $1$ & $2$ & $a$ & $b$\\
\hline
    $1$ & $1$ & $2$ & $a$ & $b$ \\
    $2$ & $2$ & $2$ & $a$ & $a$ \\
    $a$ & $2$ & $2$ & $a$ & $a$ \\
    $b$ & $1$ & $2$ & $a$ & $b$
\end{tabular} \hspace{.5cm}
\begin{tabular}{r|rrrr}
$\uparrow$ & $1$ & $2$ & $a$ & $b$\\
\hline
    $1$ & $1$ & $1$ & $1$ & $1$ \\
    $2$ & $2$ & $2$ & $1$ & $1$ \\
    $a$ & $a$ & $a$ & $b$ & $b$ \\
    $b$ & $b$ & $b$ & $b$ & $b$
\end{tabular}
\]
We do not give a direct proof, but make an observation that $M_4$ records a variant to the property in Lemma~\ref{lem:polysum}, albeit in the constant-free one-variable case.
Let $A$ consist of the set of all terms in a single variable $x$ that after left to right applications of distributivity are a proper sum: a sum of at least two subterms.  
Let $B$ denote all other terms, which after the same reduction are not a sum of terms.  
Note that $B$ contains the term $x$, so generates all terms in this one variable by application of the operations in $\mathscr{L}^-$. 
Evaluating $x$ at $b$ in $M_2$, one can see that terms in $A$ evaluate to $a$ and those in $B$ evaluate to $b$.
The fact that algebra $M_4$ is in the variety of ${\bf N}^-$ can then be seen equivalent to the fact that there is no valid law between a term in $A$ and a term in $B$. 
This can be thought of as a (1-variable, constant-free) variation of Lemma~\ref{lem:polysum}, now separating the nontrivial sum terms from the trivial sum terms.

\subsection{$M_5$} This is a little like an additive version of $M_2$.  
This time we let $B$ denote those functions of the form $nx$ for some $n\in\mathbb{N}$ (which abbreviates $\overbrace{x+x+\dots+x}^{n\text{ times}}$) and $A:={\bf N}^-[x]\backslash B$ as before.  
Again $b$ corresponds to $B$ and $a$ to $A$.  
The fact that this partition is a congruence follows because the sum of any two strictly linear functions is strictly linear, (matching $b+b=b$), but all other sums, products and exponentials are not.
This algebra can also be seen as a quotient of the constant-free large exponential algebra.

\subsection{$M_6$} Now let $B\subseteq {\bf N}^-[x]$ denote the polynomial members of ${\bf N}^-[x]$.  The remaining elements of ${\bf N}^-[x]$ have non-polynomially bounded growth; denote them by $A$.  
The corresponding quotient matches $M_6$ with $a\mapsto A$ and $b\mapsto B$.  
We mention that this is the quotient of the large exponential algebra ${\bf E}_-(x)$ obtained by collapsing all elements involving $\infty$ (to $a$), and all those not involving $\infty$ to $b$.
Mace4 also finds $M_6$ as a subreduct of the following $4$-element idempotent HSI algebra, with $M_6$ the constant-free part.
\[
\begin{tabular}{r|rrrr}
$+$ & $1$ & $2$ & $a$ & $b$\\
\hline
    $1$ & $2$ & $1$ & $a$ & $b$ \\
    $2$ & $1$ & $2$ & $a$ & $b$ \\
    $a$ & $a$ & $a$ & $a$ & $a$ \\
    $b$ & $b$ & $b$ & $a$ & $b$
\end{tabular} \hspace{.5cm}
\begin{tabular}{r|rrrr}
$\cdot$ & $1$ & $2$ & $a$ & $b$\\
\hline
    $1$ & $1$ & $2$ & $a$ & $b$ \\
    $2$ & $2$ & $2$ & $a$ & $b$ \\
    $a$ & $a$ & $a$ & $a$ & $a$ \\
    $b$ & $b$ & $b$ & $a$ & $b$
\end{tabular} \hspace{.5cm}
\begin{tabular}{r|rrrr}
$\uparrow$ & $1$ & $2$ & $a$ & $b$\\
\hline
    $1$ & $1$ & $1$ & $1$ & $1$ \\
    $2$ & $2$ & $2$ & $2$ & $2$ \\
    $a$ & $a$ & $a$ & $a$ & $a$ \\
    $b$ & $b$ & $b$ & $a$ & $a$
\end{tabular}\]

\subsection{$M_7$} This corresponds to the algebra $\langle \{0,1\},\wedge,\wedge,\wedge\rangle$.  An identity holds on this algebra if and only if it has the same variables on both sides.  
This is trivially seen to hold for functions on $\mathbb{N}$ built from variables and the operations in $\mathscr{L}^-$ because these operations are strictly increasing on inputs larger than $1$, and depend on all variables.  
Thus $M_7$ lies in the variety of~$\mathbb{N}^-$.
We may also find $M_7$ as a subreduct of the following $3$-element idempotent HSI algebra (where $b:=2$):
\[
\begin{tabular}{r|rrr}
$+$ & $1$ & $2$ & $a$\\
\hline
    $1$ & $2$ & $2$ & $a$ \\
    $2$ & $2$ & $2$ & $a$ \\
    $a$ & $a$ & $a$ & $a$
\end{tabular} \hspace{.5cm}
\begin{tabular}{r|rrr}
$\cdot$ & $1$ & $2$ & $a$\\
\hline
    $1$ & $1$ & $2$ & $a$ \\
    $2$ & $2$ & $2$ & $a$ \\
    $a$ & $a$ & $a$ & $a$
\end{tabular} \hspace{.5cm}
\begin{tabular}{r|rrr}
$\uparrow$ & $1$ & $2$ & $a$\\
\hline
    $1$ & $1$ & $1$ & $1$ \\
    $2$ & $2$ & $2$ & $a$ \\
    $a$ & $a$ & $a$ & $a$
\end{tabular}
\]

\subsection{$M_8$}  This is an $HSI$ algebra reduct, isomorphic to algebra $3$ in Burris and Lee~\cite[Theorem~4.1]{burlee92}.

\subsection{$M_9$}   Mace4 finds this as a subreduct of a $4$-element multiplicatively idempotent HSI algebra:
\[
\begin{tabular}{r|rrrr}
$+$ & 1 & $a$ & $b$ & $c$\\
\hline
    1 & 1 & $c$ & 1 & $c$ \\
    $a$ & $c$ & $a$ & $a$ & $c$ \\
    $b$ & 1 & $a$ & $b$ & $c$ \\
    $c$ & $c$ & $c$ & $c$ & $c$
\end{tabular} \hspace{.5cm}
\begin{tabular}{r|rrrr}
$\cdot$ & 1 & $a$ & $b$ & $c$\\
\hline
    1 & 1 & $a$ & $b$ & $c$ \\
    $a$ & $a$ & $a$ & $a$ & $a$ \\
    $b$ & $b$ & $a$ & $b$ & $a$ \\
    $c$ & $c$ & $a$ & $a$ & $c$
\end{tabular} \hspace{.5cm}
\begin{tabular}{r|rrrr}
$\uparrow$ & 1 & $a$ & $b$ & $c$\\
\hline
    1 & 1 & 1 & 1 & 1 \\
    $a$ & $a$ & $b$ & $b$ & $a$ \\
    $b$ & $b$ & $b$ & $b$ & $b$ \\
    $c$ & $c$ & 1 & 1 & $c$
\end{tabular}
\]
As with all algebras here (and as was detailed in the case of $M_4$), the presence of $M_{9}$ in the variety of ${\bf N}^-$ records a nontrivial fact about one-variable $\mathscr{L}^-$-functions on $\mathbb{N}$.  
The algebra is generated by $a$, and $\{a\}$ forms a $\{+,\cdot\}$-subalgebra.  
So as a homomorphic image of ${\bf N}^-[x]$ (with $x\mapsto a$), all polynomials map to $a$, but so also does any polynomial multiplied by any term (as $a\cdot a=a\cdot b=a$).  
All strict exponentials, products of strict exponentials and sums of products of strict exponentials map to $b$ (because all powers equal $b$ and $\{b\}$ is a $\{+,\cdot\}$-subalgebra).
It follows that the elements $a$ and $b$ separate out sums of products of strict exponentials ($b$) from those elements that have a polynomial summand or have a polynomial factor.  
The presence of $M_{9}$ in the variety of ${\bf N}^-$ is equivalent to the fact that there is no law between these two classes of terms.

\subsection{$M_{10}$}  Mace4 finds this as a subreduct of the following $3$-element multiplicatively idempotent HSI algebra:
\[
\begin{tabular}{r|rrr}
$+$ & $1$ & $a$ & $b$\\
\hline
    $1$ & $1$ & $1$ & $1$ \\
    $a$ & $1$ & $a$ & $a$ \\
    $b$ & $1$ & $a$ & $b$
\end{tabular} \hspace{.5cm}
\begin{tabular}{r|rrr}
$\cdot$ & $1$ & $a$ & $b$\\
\hline
    $1$ & $1$ & $a$ & $b$ \\
    $a$ & $1$ & $a$ & $b$ \\
    $b$ & $1$ & $b$ & $b$
\end{tabular} \hspace{.5cm}
\begin{tabular}{r|rrr}
$\uparrow$ & $1$ & $a$ & $b$\\
\hline
    $1$ & $1$ & $1$ & $1$ \\
    $a$ & $a$ & $a$ & $a$ \\
    $b$ & $b$ & $a$ & $a$
\end{tabular}
\]

\subsection{$M_{11}$}  This is an HSI algebra reduct, isomorphic to algebra $1$ in Burris and Lee~\cite[Theorem~4.1]{burlee92}.

\subsection{$M_{12}$}  This is an HSI algebra reduct, isomorphic to algebra $2$ in Burris and Lee~\cite[Theorem~4.1]{burlee92}. 

\subsection{$M_{13}$}  Mace4 finds this as a subreduct of the following $3$-element idempotent HSI algebra:
\[
\begin{tabular}{r|rrr}
$+$ & $1$ & $a$ & $b$\\
\hline
    $1$ & $1$ & $a$ & $a$ \\
    $a$ & $a$ & $a$ & $a$ \\
    $b$ & $a$ & $a$ & $b$
\end{tabular} \hspace{.5cm}
\begin{tabular}{r|rrr}
$\cdot$ & $1$ & $a$ & $b$\\
\hline
    $1$ & $1$ & $a$ & $b$ \\
    $a$ & $a$ & $a$ & $b$ \\
    $b$ & $b$ & $b$ & $b$
\end{tabular} \hspace{.5cm}
\begin{tabular}{r|rrr}
$\uparrow$ & $1$ & $a$ & $b$\\
\hline
    $1$ & $1$ & $1$ & $1$ \\
    $a$ & $a$ & $b$ & $b$ \\
    $b$ & $b$ & $b$ & $b$
\end{tabular}
\]
While this example is sufficient to complete case $M_{13}$, we additionally give two direct proofs that $M_{13}\in\mathsf{V}(\mathbb{N}^-)$, in order to  develop some techniques that can be adapted to cases where the idempotence approach is not available.  
First, observe that $M_{13}$ is isomorphic to the quotient of the constant free large exponential algebra~${\bf E}^-(x)$ by declaring two formal sums $s+\infty$ (or just $s$) and $t+\infty$ (or just $t$) equal if either $s$ and $t$ are both empty (element $b$), or if $s$ and $t$ are nonempty (element $a$).

Here is a further approach to proving $M_{13}\in\mathbb{V}({\bf N}^-)$ more succinctly captures the prime choice in the constant-free proof of Lemma~\ref{lem:polysum}.  
Let $\overline{\mathbb{N}}$ denote a nonstandard model of $\mathbb{N}$: some nontrivial ultrapower of $\mathbb{N}$.  
Then $\overline{\mathbb{N}}$ contains a copy of $\mathbb{N}$, but also infinite numbers: numbers that are greater than any $n\in \mathbb{N}$.  
Let $P$ be any infinite prime, and consider the $\mathscr{L}^-$-subalgebra ${\bf M}$ of $\langle\overline{\mathbb{N}};+,\cdot,\uparrow\rangle$ generated by~$P$.  
Now let $a$ denote those elements of ${\bf M}$ that are not multiples of $P^P$ and $b$ those that are multiples of $P^P$.
The additive, multiplicative and exponential behaviour of $a,b$ is exactly as in the table for $M_{13}$: note that we retain $\mathbb{N}$ as our ambient model of arithmetic: so finite sums of copies of $P$ is of the form $nP$ for $n\in \mathbb{N}$ and therefore is less than $P^2$.

\subsection{$M_{14}$} 
As $M_{14}$ is not idempotent we have no recourse to Theorem~\ref{thm:x2x}, but as with $M_{13}$, we may again find it as a quotient of the constant free large exponential algebra. 
Let $a$ denote those formal sums $s+\infty$ in ${\bf E}_-(x)$ with a nonzero linear summand, and $b$ those with no linear summand (we do not care if $\infty$ is present or not: so $x^2$ is a term with no linear summand, as is $x^2+\infty$).    
The properties in the table $M_{14}$ are then trivially verified.
The nonstandard model approach also can be applied.  Consider an infinite prime $P$ and the subalgebra of the nonstandard model ${\bf M}$ on $\overline{\mathbb{N}}$ generated by $P$, and let $a$ denote those elements that are not multiples of $P^2$, and~$b$ be everything else; this also returns $M_{14}$.

\subsection{$M_{15}$} 
This algebra is also not multiplicatively idempotent, but we may again use the constant-free large exponential algebra ${\bf E}_-(x)$. 
This time, let $a$ denote those formal sums $s+\infty$ in which $s$ has an odd coefficient of its linear component, and $b$ those with an even coefficient linear component.  This gives the algebra $M_{15}$.

%

\subsection{$M_{16}$} Mace4 finds this as a subreduct of a $4$-element multiplicatively idempotent HSI algebra.  
\[
\begin{tabular}{r|rrrr}
$+$ & $1$ & $2$ & $a$ & $b$\\
\hline
    $1$ & $2$ & $1$ & $b$ & $a$ \\
    $2$ & $1$ & $2$ & $a$ & $b$ \\
    $a$ & $b$ & $a$ & $a$ & $b$ \\
    $b$ & $a$ & $b$ & $b$ & $a$
\end{tabular} \hspace{.5cm}
\begin{tabular}{r|rrrr}
$\cdot$ & $1$ & $2$ & $a$ & $b$\\
\hline
    $1$ & $1$ & $2$ & $a$ & $b$ \\
    $2$ & $2$ & $2$ & $a$ & $a$ \\
    $a$ & $a$ & $a$ & $a$ & $a$ \\
    $b$ & $b$ & $a$ & $a$ & $b$
\end{tabular} \hspace{.5cm}
\begin{tabular}{r|rrrr}
$\uparrow$ & $1$ & $2$ & $a$ & $b$\\
\hline
    $1$ & $1$ & $1$ & $1$ & $1$ \\
    $2$ & $2$ & $2$ & $2$ & $2$ \\
    $a$ & $a$ & $a$ & $a$ & $a$ \\
    $b$ & $b$ & $b$ & $a$ & $a$
\end{tabular}
\]
As multiplicatively idempotent HSI algebras satisfy all valid identities of $\mathbb{N}$, it follows that $M_{16}$ satisfies all valid identities of $\mathbb{N}^-$.

\subsection{$M_{17}$} This is an HSI algebra reduct, isomorphic to algebra $5$ in Burris and Lee~\cite[Theorem~4.1]{burlee92} (the modulo $2$ quotient of ${\bf N}$).

\subsection{$M_{18}$} Mace4 finds this as a subreduct of the following $4$-element multiplicatively idempotent HSI~algebra:
\[ 
\begin{tabular}{r|rrrr}
$+$ & $1$ & $2$ & $a$ & $b$\\
\hline
    $1$ & $2$ & $1$ & $1$ & $2$ \\
    $2$ & $1$ & $2$ & $2$ & $1$ \\
    $a$ & $1$ & $2$ & $a$ & $b$ \\
    $b$ & $2$ & $1$ & $b$ & $a$
\end{tabular} \hspace{.5cm}
\begin{tabular}{r|rrrr}
$\cdot$ & $1$ & $2$ & $a$ & $b$\\
\hline
    $1$ & $1$ & $2$ & $a$ & $b$ \\
    $2$ & $2$ & $2$ & $a$ & $a$ \\
    $a$ & $2$ & $2$ & $a$ & $a$ \\
    $b$ & $1$ & $2$ & $a$ & $b$
\end{tabular} \hspace{.5cm}
\begin{tabular}{r|rrrr}
$\uparrow$ & $1$ & $2$ & $a$ & $b$\\
\hline
    $1$ & $1$ & $1$ & $1$ & $1$ \\
    $2$ & $2$ & $2$ & $1$ & $1$ \\
    $a$ & $a$ & $a$ & $b$ & $b$ \\
    $b$ & $b$ & $b$ & $b$ & $b$
\end{tabular}
\]
This last case completes the proof of Proposition~\ref{pro:2RSI}.
\end{proof}
At this point, the case of $4$-element HSI algebras appears to offer a better chance of progress than the case of $3$-element RSI algebras.
A natural question is whether every multiplicatively idempotent RSI algebra embeds into a multiplicatively idempotent HSI~algebra.  
This would extend Theorem~\ref{thm:x2x} to \RSI\ as well, but there are still more than 3 times as many 3-element  non-idempotent RSI algebras (187) as there are 4-element non-idempotent HSI algebras (60).
There are 216 distinct 3-element idempotent RSI algebras that are not already reducts of HSI algebras.  
As an initial computational exploration of the idempotent embedding question, we used Claude to manage scripting for a Mace4 search, finding that all 216 of these models as subreducts of an idempotent HSI algebra; it follows that none of these are Gurevi\u{c} algebras in the sense of RSI. 
Two examples required 6 further elements (with others requiring 0--5 further elements).
One of these examples is given here: the 3-element RSI algebra is the subalgebra on the elements $\{0,a,b\}$ (using $0$ because it is a multiplicative zero element, but noting that it is not an additive identity element).
\[\setlength{\arraycolsep}{3pt}
\renewcommand{\arraystretch}{0.9}
\begin{array}{c|ccccccccc}
+ & 0 & a & b & 1 & c & d & e & 2 & f\\
\hline
0 & 0 & 0 & b & c & c & c & e & e & e\\
a & 0 & a & b & d & c & d & e & f & f\\
b & b & b & 0 & e & e & e & c & c & c\\
1 & c & d & e & 2 & e & f & c & 1 & d\\
c & c & c & e & e & e & e & c & c & c\\
d & c & d & e & f & e & f & c & d & d\\
e & e & e & c & c & c & c & e & e & e\\
2 & e & f & c & 1 & c & d & e & 2 & f\\
f & e & f & c & d & c & d & e & f & f
\end{array}\quad
\begin{array}{c|ccccccccc}
\cdot & 0 & a & b & 1 & c & d & e & 2 & f\\
\hline
0 & 0 & 0 & 0 & 0 & 0 & 0 & 0 & 0 & 0\\
a & 0 & a & 0 & a & 0 & a & 0 & a & a\\
b & 0 & 0 & b & b & b & b & 0 & 0 & 0\\
1 & 0 & a & b & 1 & c & d & e & 2 & f\\
c & 0 & 0 & b & c & c & c & e & e & e\\
d & 0 & a & b & d & c & d & e & f & f\\
e & 0 & 0 & 0 & e & e & e & e & e & e\\
2 & 0 & a & 0 & 2 & e & f & e & 2 & f\\
f & 0 & a & 0 & f & e & f & e & f & f
\end{array}\quad
\begin{array}{c|ccccccccc}
{}^\wedge & 0 & a & b & 1 & c & d & e & 2 & f\\
\hline
0 & a & a & a & 0 & 0 & 0 & 0 & 0 & 0\\
a & a & a & a & a & a & a & a & a & a\\
b & a & a & a & b & 0 & 0 & 0 & b & 0\\
1 & 1 & 1 & 1 & 1 & 1 & 1 & 1 & 1 & 1\\
c & a & a & a & c & 0 & 0 & 0 & c & 0\\
d & a & a & a & d & a & a & a & d & a\\
e & a & a & a & e & 0 & 0 & 0 & e & 0\\
2 & a & a & a & 2 & a & a & a & 2 & a\\
f & a & a & a & f & a & a & a & f & a
\end{array}
\]

\section*{AI assistance}
Research in this paper was performed prior to the emergence of AI assisted reasoning, aside from the use of Prover9 and Mace4 as stated at points in the paper.  
ChatGPT Sol was used for proof reading, but not for writing.

\bibliographystyle{amsplain}


\end{document}